\def\disp{\displaystyle}
\def\dref#1{(\ref{#1})}
\theoremstyle{plain}
\newtheorem{theorem}{Theorem}[section]
\newtheorem{lemma}{Lemma}[section]
\newtheorem{proposition}{Proposition}[section]
\theoremstyle{definition}
\newtheorem{remark}{Remark}[section]
\numberwithin{equation}{section}
\begin{document}

\title{\bf An optimal  result  for global classical and bounded  solutions in a two-dimensional Keller-Segel-Navier-Stokes system with    sensitivity
%Global weak solutions in a three-dimensional Keller-Segel-Navier-Stokes system with  nonlinear diffusion
}

\author{Jiashan Zheng$^{a,b}$,\thanks{Corresponding author.   E-mail address:
 zhengjiashan2008@163.com (J. Zheng)}
\\
 $^{a}$
    School of Information,\\
    Renmin University of China, Beijing, 100872, P.R.China \\
 $^{b}$
    School of Mathematics and Statistics Science,\\
     Ludong University, Yantai 264025,  P.R.China \\
% $^2$School of Mathematics and Statistics, Beijing
%Institute of Technology\\ Beijing 100081, P.R.China \\
 %$^3$Science College of Northeast Dianli University, Jilin City 132013, China
}
\date{}

%\author{
%Jiashan Zheng$^{1}$\thanks{Corresponding author.   E-mail address:
% zhengjiashan2008@163.com (J.Zheng)},Yifu Wang$^{2}$\\
%    $^{1}$School of Mathematics and Statistics Science,\\
%     Ludong University, Yantai 264025,  P.R.China \\
% $^2$School of Mathematics and Statistics, Beijing
%Institute of Technology\\ Beijing 100081, P.R.China \\
% %$^3$Science College of Northeast Dianli University, Jilin City 132013, China
%}
%\date{}

\maketitle \vspace{0.3cm}
\noindent
\begin{abstract}
This paper deals with a boundary-value problem for a coupled
chemotaxis-Navier-Stokes system  involving tensor-valued
sensitivity with saturation
$$
 \left\{
 \begin{array}{l}
   n_t+u\cdot\nabla n=\Delta n-\nabla\cdot(nS(x,n,c)\nabla c),\quad
x\in \Omega, t>0,\\
    c_t+u\cdot\nabla c=\Delta c-c+n,\quad
x\in \Omega, t>0,\\
u_t+\kappa(u \cdot \nabla)u+\nabla P=\Delta u+n\nabla \phi,\quad
x\in \Omega, t>0,\\
\nabla\cdot u=0,\quad
x\in \Omega, t>0,\\
 \end{array}\right.\eqno(KSNF)
 $$
 which describes chemotaxis-fluid interaction in cases when the evolution of the chemoattractant is essentially
dominated by production through cells, where  $\kappa\in \mathbb{R},\phi\in W^{2,\infty}(\Omega)$ and $S$
is a given function with values in  $\mathbb{R}^{2\times2}$
which fulfills
$$|S(x,n,c)| \leq C_S (1 + n)^{-\alpha}$$
with some $C _S > 0$ and $\alpha \geq 0.$
If {\bf $\alpha>0$} and $\Omega\subseteq \mathbb{R}^2$ is a {\bf bounded} domain with
smooth boundary, then for all reasonably regular initial data, a corresponding initial-boundary value
problem for $(KSNF)$ possesses  a global classical solution which
is bounded on $\Omega\times(0,\infty)$. This extends
a recent result by Wang-Winkler-Xiang (Annali della Scuola Normale Superiore di Pisa-Classe di Scienze. XVIII, (2018), 2036--2145)
 which asserts global existence of bounded  solutions under the constraint
$\Omega\subseteq \mathbb{R}^2$ is a  bounded {\bf convex domain} with smooth boundary. Moreover,  we shall improve the
result of Wang-Xiang (J. Diff. Eqns., 259(2015), 7578--7609), who proved the possibility of global and bounded,
in the case that ${\bf\kappa\equiv0}$ and $\alpha>0$.
In comparison to the result
for the corresponding fluid-free system, the {\bf optimal condition} on the parameter $\alpha$ for both {\bf global existence} and {\bf boundedness}  are obtained.
Our main tool is consideration of the energy functional
$$\int_{\Omega}n^{1+\alpha} +\int_{\Omega}|\nabla c|^{2},$$
which is a new energy-like
functional.
\end{abstract}

\vspace{0.3cm}
\noindent {\bf\em Key words:}~%Boundedness;
Navier-Stokes system; Keller-Segel model; %Quadratic degradation;
Global existence; Tensor-valued
sensitivity
%Tensor-valued
%sensitivity

\noindent {\bf\em 2010 Mathematics Subject Classification}:~
 92C17, 35Q30, 35K55,
35B65, 35Q92

\newpage
\section{Introduction}

\subsection{Chemotaxis model}
Many phenomena, which appear in natural science, especially, biology and
physics, support animals' lives (see \cite{Liggh00,Xu5566r793,Wangsseeess21215,Guggg1215}). The motion of cells moving towards the higher concentration of a chemical signal is called chemotaxis. For example, bacteria often swim toward higher concentrations of a signaling substance to survive.  In 1970, Chemotaxis model was first introduced by Keller and Segel (\cite{Keller2710,Keller79}), since then, a number of variations of modified chemotaxis models have been %widely
have been extensively studied (see Winkler et al. \cite{Bellomo1216},  Hillen-Painter \cite{Hillen} for example).
One particular class of
models is concerned with situations when the signal is  produced, by the
cells (see Tao-Winkler \cite{Tao794},  Winkler et al. \cite{Bellomo1216,Winkler792,Winkler793}). A correspondingly modified chemotaxis system, in its simplest form, is then given by
\begin{equation}
 \left\{\begin{array}{ll}
 n_t=\Delta n-\nabla\cdot( S(x,n,c)\nabla c),
 \quad
x\in \Omega,~ t>0,\\
 \disp{ c_t=\Delta c- c +n,}\quad
x\in \Omega, ~t>0,\\
 \end{array}\right.\label{ssd7223ddff44101.2x16677}
\end{equation}
where the tensor-valued function (or the scalar function)
 $S$ measures the chemotactic sensitivity, which may depend on the
cell density $n$  and chemosignal concentration  $c$, and also on the environmental variable $x$.
The results about the chemotaxis model \dref{ssd7223ddff44101.2x16677} appear to be rather complete.
%One of the most characteristic mathematical features of the quasilinear Keller-Segel system, %in its simplest
%%form
%given by
In fact, when $S(x, n, c):= n(1+n)^{-\alpha}$ is a
scalar function, there is a critical exponent
\begin{equation}
 \alpha_{*}=1 -\frac{2}{N}\label{7223ddfff44101.2x1ddff6677}
\end{equation}
which is related to the presence of a so-called volume-filling
effect. More precisely, the solution of
system \dref{ssd7223ddff44101.2x16677} will globally exist  if $\alpha >\alpha_{*}$ (see Horstmann-Winkler \cite{Horstmann791} and Kowalczyk \cite{Kowalczyk7101}) and blow up in finite time
 if $\Omega$ is a ball in $\mathbb{R}^N (N \geq 2)$ and  $\alpha <\alpha_{*}$ under some technical assumptions (see Horstmann-Winkler \cite{Horstmann791} and Winkler \cite{Winkler79}).
 The global existence and boundedness of classical solution
to \dref{ssd7223ddff44101.2x16677} with the logistic source and more general form have also been investigated, see Cie\'{s}lak et al. \cite{Cie794,Cie791,Cie201712791}, Tao-Winkler \cite{Tao794,Winkler79,Winkler72} and Zheng et al. \cite{Zheng00,Zheng33312186,Zhengssssdefr23,Zhengssdefr23,Zhengsddfffsdddssddddkkllssssssssdefr23,Zhengssssssdefr23} and
the references therein.

\subsection{Chemotaxis-Navier-Stokes system}
In various situations, however, the migration of bacteria is furthermore substantially affected by
changes in their environment. For instance,
striking experimental evidence, as reported in \cite{Tuval1215} (see also Zhang-Zheng \cite{Zhang12176}, Winkler \cite{Winkler31215,Winkler61215,Winkler11215} and Wang-Winkler-Xiang \cite{Wang23421215}), reveals dynamical generation of patterns and
spontaneous emergence of turbulence in populations of aerobic bacteria suspended in sessile drops of
water. Taking into account all these processes, to describe the above biological phenomena, Tuval et al. (\cite{Tuval1215}) proposed the model
 the following chemotaxis-Navier-Stokes system
\begin{equation}
 \left\{\begin{array}{ll}
   n_t+u\cdot\nabla n=\Delta n-\nabla\cdot( nS(c)\nabla c),\quad
x\in \Omega, t>0,\\
    c_t+u\cdot\nabla c=\Delta c-nf(c),\quad
x\in \Omega, t>0,\\
u_t+\kappa (u\cdot\nabla)u+\nabla P=\Delta u+n\nabla \phi,\quad
x\in \Omega, t>0,\\
\nabla\cdot u=0,\quad
x\in \Omega, t>0\\
 \end{array}\right.\label{1.ss1hhjjddssggddddffftyy}
\end{equation}
for the unknown population density $n$, the signal concentration $c$, the fluid velocity field $u$ and the associated pressure $P$, in a bounded domain $\Omega\subseteq \mathbb{R}^N(N=2,3)$. Here $\phi$,  $\kappa\in \mathbb{R}$ and  $f(c)$  denote, respectively,  the  given potential function as well as the
strength of nonlinear fluid convection and the consumption rate of the oxygen by the bacteria.
 This model \dref{1.ss1hhjjddssggddddffftyy} describes a biological process in which cells  move towards a chemically more favorable environment.
  By making use of energy-type functionals, many literatures deal with
global solvability, boundedness, large time behavior of solutions to the model
\dref{1.ss1hhjjddssggddddffftyy} for the
bounded domains and the whole space (see e.g.
Lorz et al. \cite{Duan12186,Lorz1215},
Winkler et al. \cite{Bellomo1216,Winkler31215,Winkler51215}, Chae et al. \cite{Chaex12176,Chaexdd12176}, Di Francesco-Lorz-Markowich \cite{Francesco12186},
Zhang-Zheng \cite{Zhang12176} and references therein). For  example, Winkler (see Winkler \cite{Winkler31215,Winkler61215}) proved that in two-dimensional space \dref{1.ss1hhjjddssggddddffftyy}
admits a unique global classical solution which stabilizes to the spatially homogeneous equilibrium $(\frac{1}{|\Omega|}\int_{\Omega}n_0, 0, 0)$ in the large time limit. While in three-dimensional
setting, Winkler (see Winkler  \cite{Winkler51215}) also shows  that there exists a globally defined weak solution to \dref{1.ss1hhjjddssggddddffftyy}.
For the model with nonlinear diffusion, there also exist some results on global existence, boundedness and large time behavior for the bounded domains and the whole space
(see Duan-Xiang \cite{Duanx41215}, Di Francesco-Lorz-Markowich \cite{Francesco12186}, Ishida \cite{Ishida1215}, Winkler et al. \cite{Tao71215,Tao61215,Winklerdddsss51215}   and the references therein for details).

\subsection{Keller-Segel-Navier-Stokes system}

As pointed by Xue-Othmer (\cite{Xue1215}), the
environment for the bacterial cells is more complicated and other external forces
have to be considered. Hence, the chemotactic
sensitivity $S$ in system \dref{1.ss1hhjjddssggddddffftyy} should be replaced by the matrix $S(x,n,c)$
(see Xue-Othmer \cite{Xue1215}, Winkler \cite{Winklesssssrdddsss51215,Winkler11215},   Xue \cite{Xuess1215} and also Painter-Maini-Othmer \cite{Paintsser55677}), \dref{1.ss1hhjjddssggddddffftyy}
turns into a chemotaxis--Stokes system with rotational
flux. Thus, a generalization of the chemotaxis-fluid system \dref{1.ss1hhjjddssggddddffftyy}
should be of the form
\begin{equation}
 \left\{\begin{array}{ll}
   n_t+u\cdot\nabla n=\Delta n-\nabla\cdot( nS(x,n,c)\nabla c),\quad
x\in \Omega, t>0,\\
    c_t+u\cdot\nabla c=\Delta c-nf(c),\quad
x\in \Omega, t>0,\\
u_t+\kappa (u\cdot\nabla)u+\nabla P=\Delta u+n\nabla \phi,\quad
x\in \Omega, t>0,\\
\nabla\cdot u=0,\quad
x\in \Omega, t>0.\\
% \disp{\frac{\partial n}{\partial \nu}=\frac{\partial c}{\partial \nu}=0,u=0,}\quad
%x\in \partial\Omega, t>0,\\
%\disp{n(x,0)=n_0(x),c(x,0)=c_0(x),u(x,0)=u_0(x),}\quad
%x\in \Omega,\\
 \end{array}\right.\label{1.ss1hhjjsdfffddssggddddffftyy}
\end{equation}
In contrast to the chemotaxis-fluid system
\dref{1.ss1hhjjddssggddddffftyy}, chemotaxis-fluid systems \dref{1.ss1hhjjsdfffddssggddddffftyy} with tensor-valued sensitivity lose some natural gradient-like structure (see Winkler \cite{Winklesssssrdddsss51215,Winkler11215},  Wang-Winkler-Xiang \cite{Wang23421215} and also  Wang-Xiang \cite{Wang21215,Wangss21215}, Ke-Zheng \cite{Zhengssdddd00}). Thus, only very few results have appeared to be available on
chemotaxis-Stokes system \dref{1.ss1hhjjsdfffddssggddddffftyy} with such tensor-valued sensitivities so far (see e.g.  Wang-Cao \cite{Wang11215}, Winkler \cite{Winklesssssrdddsss51215,Winkler11215}).   For example, in the three dimensional case,
Wang and Cao (\cite{Wang11215}) obtained the global existence of classical solutions to system \dref{1.ss1hhjjsdfffddssggddddffftyy} with $\kappa = 0$
 and some decay on $S$.

 Concerning the framework where the chemical is {\bf produced} by the cells instead of consumed, then corresponding  chemotaxis--fluid model
 is then the quasilinear  Keller-Segel-Navier-Stokes system of the form
 \begin{equation}
 \left\{\begin{array}{ll}
   n_t+u\cdot\nabla n=\Delta n-\nabla\cdot(nS(x, n, c)\nabla c),\quad
x\in \Omega, t>0,\\
    c_t+u\cdot\nabla c=\Delta c-c+n,\quad
x\in \Omega, t>0,\\
u_t+\kappa (u\cdot\nabla)u+\nabla P=\Delta u+n\nabla \phi,\quad
x\in \Omega, t>0,\\
\nabla\cdot u=0,\quad
x\in \Omega, t>0,\\
 \disp{\left(nS(x, n, c)\nabla c\right)\cdot\nu=\nabla c\cdot\nu=0,u=0,}\quad
x\in \partial\Omega, t>0,\\
\disp{n(x,0)=n_0(x),c(x,0)=c_0(x),u(x,0)=u_0(x),}\quad
x\in \Omega\\
 \end{array}\right.\label{1.1}
\end{equation}
in a bounded domain in $\mathbb{R}^N$ with smooth boundary, where our main focus will be on the planar case {\bf $N = 2.$}
%where $\Omega\subseteq \mathbb{R}^2$ is a bounded    domain with smooth boundary and
Here
$S(x, n, c)$ is a chemotactic sensitivity tensor satisfying
\begin{equation}\label{x1.73142vghf48rtgyhu}
S\in C^2(\bar{\Omega}\times[0,\infty)^2;\mathbb{R}^{2\times2})
 \end{equation}
 and
 \begin{equation}\label{x1.73142vghf48gg}|S(x, n, c)|\leq C_S(1 + n)^{-\alpha} ~~~~\mbox{for all}~~ (x, n, c)\in\Omega\times [0,\infty)^2
 \end{equation}
with some $C_S > 0$ and $\alpha\geq 0$.
Such chemotaxis
fluid system with signal production arises in the modeling of bacterial populations living in the liquid (\cite{Bellomo1216,Hillen}).

Due to the presence
of the tensor-valued sensitivity as well as the strongly nonlinear term $(u \cdot \nabla)u$ and lower  regularity for $n$ ($\int_{\Omega}n= \int_{\Omega}n_0$, as the only apparent a priori information available),
%In contrast to the large number of the existed results on \dref{1.1},
the mathematical analysis of \dref{1.1} regarding global and bounded solutions is far from
trivial
 (see Wang-Xiang et. al.  \cite{Peng55667,Wang21215,Wangss21215,Wangssddss21215}, Zheng \cite{Zhenddddgssddsddfff00}). In fact, in 2-dimensional, if $S=S(x, n, c)$ is a tensor-valued sensitivity fulfilling \dref{x1.73142vghf48rtgyhu}
and \dref{x1.73142vghf48gg}, Wang and Xiang (\cite{Wang21215}) proved that Stokes-version ($\kappa=0$ in the first equation of \dref{1.1}) of system \dref{1.1}
admits a unique global classical solution which is bounded. Then Wang-Winkler-Xiang (\cite{Wang23421215}) further shows that when $\alpha >0$ and $\Omega\subseteq \mathbb{R}^2$ is a bounded {\bf convex domain} with smooth boundary,
system \dref{1.1} possesses a global-in-time classical and bounded solution.
 We should pointed that the key approach of \cite{Wang23421215} is to establish the boundedness involves an analysis of the functional
\begin{equation}\int_{\Omega}\ln  n(\cdot,t)n(\cdot,t) +a\int_{\Omega} |\nabla {c}(\cdot,t)|^{2}
 \label{gghhggghnssdjjffff1.1hhjjddssggtyy}
\end{equation}
and some bootstrap argument, where   $a > 0$ is a suitable positive constant, $n$ and $c$ are components of the solutions to  \dref{1.1} (see
Lemma 6.2 of \cite{Wang23421215}).
Combined with \dref{gghhggghnssdjjffff1.1hhjjddssggtyy}, in view of the {\bf convexity} of $\Omega$, one can  obtain the upper bound of the functional
\begin{equation}\frac{1}{p}\int_{\Omega}n^{p}(\cdot,t) +\frac{2}{q}\int_{\Omega} |\nabla {c}(\cdot,t)|^{2{q}}
 \label{gghhggghnjddddjffff1.1hhjjddssggtyy}
\end{equation}
 (see  Lemma 7.1 of \cite{Wang23421215}). To the best
of our knowledge, it is yet unclear whether for $\kappa\neq0$ and $\Omega\subseteq \mathbb{R}^2$ is a {\bf non-convex} domain, the solution of \dref{1.1} is
is bounded  or not. {\bf By using a different method} involving more general entropy-like functionals
(see Lemma \ref{lemma4556664ddd5630223}), the present paper will extend the above result so as to cover the {\bf non-convex domain}.
In a three-dimensional setup  and tensor-valued sensitivity
$S$ satisfying \dref{x1.73142vghf48gg} global weak solutions have been shown to exists
for $\alpha > \frac{3}{7}$ (see \cite{LiuZhLiuLiuandddgddff4556}) and $\alpha > \frac{1}{3}$  (see \cite{Zhengssdddd00} and also \cite{Wangssddss21215}), respectively.

%By using a different method involving more general entropy-like functionals
%(cf. Section 2), the present paper will extend the above result so as to cover the entire
%range m > 1, and apart from that we shall assert global boundedness of the solutions
%obtained.

Throughout this paper,
we assume that
\begin{equation}
\phi\in W^{2,\infty}(\Omega)
\label{dd1.1fghyuisdakkkllljjjkk}
\end{equation}
 and the initial data
$(n_0, c_0, u_0)$ fulfills
\begin{equation}\label{ccvvx1.731426677gg}
\left\{
\begin{array}{ll}
\displaystyle{n_0\in C^\kappa(\bar{\Omega})~~\mbox{for certain}~~ \kappa > 0~~ \mbox{with}~~ n_0\geq0 ~~\mbox{in}~~\Omega},\\
\displaystyle{c_0\in W^{2,\infty}(\Omega)~~\mbox{with}~~c_0,w_0\geq0~~\mbox{in}~~\bar{\Omega},}\\
\displaystyle{u_0\in D(A),}\\
\end{array}
\right.
\end{equation}
where $A$ denotes the Stokes operator with domain $D(A) := W^{2,{2}}(\Omega)\cap  W^{1,{2}}_0(\Omega)
\cap L^{2}_{\sigma}(\Omega)$,
%the realization of the Stokes operator in the solenoidal subspace
and
$L^{2}_{\sigma}(\Omega) := \{\varphi\in  L^{2}(\Omega)|\nabla\cdot\varphi = 0\}$. %for ${r}\in(1,\infty)$
% of $L^{r}(\Omega)$
 (\cite{Sohr}).

Within the above frameworks, our main result concerning global existence and boundedness
of solutions to \dref{1.1} is as follows.
\begin{theorem}\label{theorem3}
Let  $\Omega\subset \mathbb{R}^2$ be a bounded    domain with smooth boundary,
%\dref{x1.73142vghf48rtgyhu}, \dref{x1.73142vghf48gg} %\dref{x1.73142vghf481}
% and
 \dref{dd1.1fghyuisdakkkllljjjkk} and \dref{ccvvx1.731426677gg}
 hold. Moreover, assume that suppose that $S$ satisfies \dref{x1.73142vghf48rtgyhu}
and \dref{x1.73142vghf48gg}
% and \dref{x1.73142vghf487890o0hhjj}
with some
\begin{equation}\label{x1.73142vghf48}\alpha>0.
\end{equation}
Then for any choice of $n_0, c_0$ and $u_0$ fulfilling \dref{ccvvx1.731426677gg},
 the problem \dref{1.1}  possesses a global classical solution $(n, c, u, P)$
 which satisfies
 \begin{equation}
 \left\{\begin{array}{ll}
 n\in C^0(\bar{\Omega}\times[0,\infty))\cap C^{2,1}(\bar{\Omega}\times(0,\infty)),\\
  c\in  C^0(\bar{\Omega}\times[0,\infty))\cap C^{2,1}(\bar{\Omega}\times(0,\infty))\cap L^\infty([0,\infty); W^{1,p}(\Omega))~~\mbox{with}~~p>1,\\
%  w_\varepsilon\in  C^0(\bar{\Omega}\times[0,T_{max,\varepsilon}))\cap C^{2,1}(\bar{\Omega}\times(0,T_{max,\varepsilon}))\cap L^\infty([0,T_{max,\varepsilon}); W^{1,p}(\Omega))~~\mbox{with}~~p>2,\\
  u\in  C^0(\bar{\Omega}\times[0,\infty);\mathbb{R}^2)\cap C^{2,1}(\bar{\Omega}\times(0,\infty);\mathbb{R}^2)\cap L^\infty([0,\infty); D(A^\gamma))~~\mbox{with}~~\gamma\in(0,1),\\
  P\in  C^{1,0}(\bar{\Omega}\times(0,\infty))\\
   \end{array}\right.\label{1.ffhhh1ddfghyuisda}
\end{equation}
as well as $n$ and $c$ are nonnegative in
$\Omega\times(0,\infty)$.
Moreover, this solution is bounded in the sense that for each $p > 1$
and any $\gamma\in (0, 1)$, there exists $C(p, \gamma) > 0$ with the property that
%  for which $n, c$
%and $u$ are bounded in
%$\Omega\times(0,\infty)$ in the sense that there exists $C > 0$ fulfilling
\begin{equation}
\|n(\cdot, t)\|_{L^\infty(\Omega)}+\|c(\cdot, t)\|_{W^{1,p}(\Omega)}+\| A^\gamma u(\cdot, t)\|_{L^{2}(\Omega)}\leq C(p, \gamma)~~ \mbox{for all}~~ t>0.
\label{1.163072xggttyyu}
\end{equation}
\end{theorem}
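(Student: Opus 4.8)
The plan is to combine a standard local existence theory with a single coupled a priori estimate for the functional $\int_\Omega n^{1+\alpha}+\int_\Omega|\nabla c|^2$, and then to bootstrap to the full boundedness assertion \dref{1.163072xggttyyu}. First I would invoke Amann-type fixed point theory for quasilinear parabolic systems to obtain a maximal classical solution on some interval $[0,T_{\max})$ together with the extensibility criterion that, if $T_{\max}<\infty$, then $\|n(\cdot,t)\|_{L^\infty(\Omega)}+\|c(\cdot,t)\|_{W^{1,q}(\Omega)}+\|A^\gamma u(\cdot,t)\|_{L^2(\Omega)}\to\infty$ as $t\nearrow T_{\max}$ for suitable $q>2$ and $\gamma\in(\tfrac12,1)$. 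The elementary a priori information — mass conservation $\int_\Omega n(\cdot,t)=\int_\Omega n_0$ from integrating the first equation under the no-flux condition, the resulting $L^1$-bound on $c$, and the Navier-Stokes energy identity yielding $u\in L^\infty_{loc}([0,T_{\max});L^2(\Omega))$ and $\nabla u\in L^2_{loc}$ — will serve as the backbone for all subsequent estimates.

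The heart of the argument is the differential inequality for $\mathcal E(t):=\int_\Omega n^{1+\alpha}+\int_\Omega|\nabla c|^2$. Differentiating $\int_\Omega n^{1+\alpha}$ along the first equation produces the dissipation $-\alpha(1+\alpha)\int_\Omega n^{\alpha-1}|\nabla n|^2=-c_\alpha\int_\Omega|\nabla n^{(1+\alpha)/2}|^2$ together with a cross term which, thanks to the bound $|S|\le C_S(1+n)^{-\alpha}$ in \dref{x1.73142vghf48gg} and the pointwise inequality $n^\alpha(1+n)^{-\alpha}\le1$, is controlled by $C\int_\Omega|\nabla n||\nabla c|$; the convective contribution drops out by $\nabla\cdot u=0$ and $u|_{\partial\Omega}=0$. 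Testing the $c$-equation against $-\Delta c$ gives $-2\int_\Omega|\Delta c|^2-2\int_\Omega|\nabla c|^2$, a matching cross term $2\int_\Omega\nabla n\cdot\nabla c$, and the fluid contribution $2\int_\Omega\Delta c\,(u\cdot\nabla c)$, where $\partial_\nu c=0$ makes all boundary terms of this step vanish. I would then absorb the $\nabla n$-factors of the cross terms into the two dissipation terms by Young's inequality, which leaves residual quantities of the type $\int_\Omega n^{1-\alpha}|\nabla c|^2$ and $\int_\Omega n^2$.

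The decisive step is to absorb these residuals. Writing $w:=n^{(1+\alpha)/2}$, so that $\int_\Omega n^{1+\alpha}=\|w\|_{L^2}^2$ and the $n$-dissipation equals $c_\alpha\|\nabla w\|_{L^2}^2$, a two-dimensional Gagliardo-Nirenberg inequality gives $\int_\Omega n^2=\|w\|_{L^{4/(1+\alpha)}}^{4/(1+\alpha)}\le C\|\nabla w\|_{L^2}^{2(1-\alpha)/(1+\alpha)}\|w\|_{L^2}^{\sigma}+C$, whose power of $\|\nabla w\|_{L^2}$ is strictly below $2$ precisely when $\alpha>0$; similarly $\int_\Omega n^{1-\alpha}|\nabla c|^2\le\|n\|_{L^1}^{1-\alpha}\|\nabla c\|_{L^{2/\alpha}}^2$ is estimated through $\|\nabla c\|_{L^{2/\alpha}}^2\le C\|D^2c\|_{L^2}^{2(1-\alpha)}\|\nabla c\|_{L^2}^{2\alpha}+C\|\nabla c\|_{L^2}^2$, again subcritical exactly for $\alpha>0$ (the binding range being $\alpha\le1$, the case $\alpha\ge1$ being simpler). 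This is where the \emph{optimal} threshold $\alpha>0$ enters. To convert $\|D^2c\|_{L^2}$ into the available dissipation $\|\Delta c\|_{L^2}$ on a possibly non-convex $\Omega$, I would use the identity relating $\int_\Omega|D^2c|^2$ and $\int_\Omega|\Delta c|^2$ up to a boundary integral, and control the latter by the trace-interpolation inequality $\|\nabla c\|_{L^2(\partial\Omega)}^2\le\varepsilon\|D^2c\|_{L^2(\Omega)}^2+C_\varepsilon\|\nabla c\|_{L^2(\Omega)}^2$, which is valid on arbitrary smooth domains. This trace estimate is precisely what replaces the convexity hypothesis of Wang-Winkler-Xiang, whose method instead exploited the favorable sign of $\partial_\nu|\nabla c|^2$ for the higher functional \dref{gghhggghnjddddjffff1.1hhjjddssggtyy}. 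After absorbing all critical terms, treating the fluid term $\int_\Omega\Delta c\,(u\cdot\nabla c)$ by another Gagliardo-Nirenberg estimate together with the $L^2$-bound on $\nabla u$, and using the conserved mass to render the dissipation coercive, I expect an inequality of the form $\mathcal E'(t)\le-\delta\mathcal E(t)+C$, hence a uniform bound on $\int_\Omega n^{1+\alpha}$ and $\int_\Omega|\nabla c|^2$.

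Finally, I would upgrade this to \dref{1.163072xggttyyu} by a standard bootstrap: the bound on $\int_\Omega n^{1+\alpha}$ feeds, via smoothing estimates for the Neumann heat semigroup applied to the $c$-equation, into successively higher $L^p$-bounds for $\nabla c$; these in turn improve the integrability of $n$ through a Moser/semigroup iteration that exploits the extra decay $(1+n)^{-\alpha}$ in the taxis term, yielding $n\in L^\infty((0,T_{\max});L^\infty(\Omega))$. The Stokes semigroup then provides the $D(A^\gamma)$-bound for $u$, and the extensibility criterion forces $T_{\max}=\infty$. The main obstacle throughout is the third step — simultaneously absorbing $\int_\Omega n^2$ and $\int_\Omega n^{1-\alpha}|\nabla c|^2$ into the dissipation while handling the non-convex boundary term — since it is here that the coupling between the two equations, the sharp role of $\alpha>0$, and the replacement of convexity by trace interpolation all come together.
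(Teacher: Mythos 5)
Your proposal is built on exactly the functional the paper uses (Lemma \ref{lemma4556664ddd5630223}), with the same mechanism: testing by $n^{\alpha}$ and $-\Delta c$, exploiting \dref{x1.73142vghf48gg}, Gagliardo--Nirenberg interpolation so that $\alpha>0$ is the only threshold, and a semigroup bootstrap to $L^{\infty}$; your trace-interpolation treatment of the non-convex boundary is a legitimate alternative to the paper's route via elliptic regularity, which yields $\|\nabla c\|_{L^4}^2\le C\|\Delta c\|_{L^2}\|\nabla c\|_{L^2}$ on any smooth domain. However, there is a genuine gap at the decisive step. To close the entropy inequality you invoke ``the $L^2$-bound on $\nabla u$'' for the fluid term and conclude $\mathcal{E}'\le-\delta\mathcal{E}+C$. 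No uniform-in-time bound on $\|\nabla u\|_{L^2}$ is available at that stage: the basic energy estimates give only the space-time bound \dref{bnmbncz2.5ghhjuyuivvbnnihjj}, $\int_t^{t+\tau}\int_\Omega|\nabla u_\varepsilon|^2\le C$. In the paper, uniform control of $\|\nabla u_\varepsilon\|_{L^2}$ is Step 1 of Lemma \ref{lemma45630hhuujj}, and its proof consumes the output \dref{3.10gghhjuuloollsdffffffgghhhy} of the entropy lemma; feeding it back into that lemma is circular. What can be proved at this stage is only $y'(t)+h(t)\le\rho(t)y(t)+C$ with $\rho(t)=C\|\nabla u_\varepsilon\|_{L^2}^2$ merely locally integrable, see \dref{ddfghgfhggddhjjjnkkll11cz2.5ghju48}; to conclude boundedness one additionally needs the space-time bound $\int_t^{t+\tau}y\le C$ (obtained in \dref{3333cz2.kkk5kke345677ddfdddddf89001ddff214114114rrggjjkk}) and a Gronwall argument started from a well-chosen $t_0\in[t-\tau,t)$. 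Your sketch contains neither ingredient, and they are not cosmetic: the same device is what absorbs your residual $C\|\nabla c\|_{L^2}^2$ (whose constant is not small relative to the dissipation) coming from anchoring $\int_\Omega n^{1-\alpha}|\nabla c|^2$ at $\|\nabla c\|_{L^2}$, and your Gagliardo--Nirenberg bound for $\int_\Omega n^2$ anchored at $\|w\|_{L^2}^{\sigma}$ leaves, after Young, a superlinear power of $\mathcal{E}$ (a Riccati-type term) unless that factor is re-interpolated against the conserved mass, as the paper does in \dref{3333cz2.5kke345677ddff89001214114114rrggjjkk}.

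A second, structural gap concerns existence itself. You apply Amann-type theory directly to \dref{1.1}, but for tensor-valued $S$ the flux condition in \dref{1.1} is not the Neumann condition $\partial_\nu n=0$; it couples $\nabla n$ and $\nabla c$ through $S$ obliquely on $\partial\Omega$, and no off-the-shelf local theory or extensibility criterion applies to it. This is precisely why the paper replaces $S$ by $S_\varepsilon=\rho_\varepsilon\chi_\varepsilon S$ vanishing near $\partial\Omega$ (and regularizes the convection by $Y_\varepsilon$), proves local existence and all $\varepsilon$-independent estimates for the approximate problems \dref{1.1fghyuisda} (Lemma \ref{lemma70}), and recovers a classical solution of \dref{1.1} only through the compactness and limit procedure of Section 6. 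Your plan has no substitute for this construction, so even with the a priori estimate repaired it does not yet produce the solution asserted in Theorem \ref{theorem3}.
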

\begin{remark}
%Obviously, when $  D(n)=u$, It can be observed from Theorem \ref{theorem3} that in the case of linear diffusion, i.e., $m = 1$ the global
%existence and boundedness of the solutions to \dref{1.1} require $\alpha>\frac{1}{5}$
% which implies $\alpha> \frac{1}{6}$ endowed
%in \cite{Wang11215}.
%%the coefficient $b$ of logistic source is not required in the case of $ q+l <\lambda$,  which extends the results of Zheng et al. \cite{Zheng2}.
(i)  We remove the {\bf convexity} of $\Omega$ required in \cite{Wang23421215}.

%From Theorem \ref{theorem3}, we conclude that  if
%the exponent $m$ of nonlinear diffusion  is large than $\frac{4}{3}$, then
% model \dref{1.1} exists a global (weak) bounded solution,   which yields to the nonlinear diffusion term  benefits the global of solutions.

 (ii)
 If $u\equiv0$,  Theorem \ref{theorem3} is (partly) coincides with
Theorem 4.1 of \cite{Winkler79}, which is {\bf optimal} according to
the fact that the 2D fluid-free system  admits a global bounded classical solution  for $\alpha>0$
 as
mentioned before.

 (iii) Theorem \ref{theorem3}  extends the results of    Wang-Xiang \cite{Wang21215},
%%%
who proved the possibility of boundedness,
in the case that {\bf $\kappa=0$} and  $S$ satisfies \dref{x1.73142vghf48rtgyhu}
as well as  \dref{x1.73142vghf48gg}
% and \dref{x1.73142vghf487890o0hhjj}
with some $\alpha>0.$

(iv)
In the two dimensional case, if $\kappa=0$ in \dref{1.1},
Li-Wang-Xiang \cite{Liggghh793} showed the global existence and boundedness of weak solutions to
system \dref{1.1} with  non-linear diffusion ($\Delta n$ is replaced by $\Delta n^m$ in the first equation of \dref{1.1}) for any $m > 1$. Using the idea and method of this paper, one can extend the above result so as to cover the case $m>1$ and {\bf $\kappa\in \mathbb{R}$} (see our recent paper \cite{Zhenssddddgssddsddfff00}), which is also optimal according to \dref{7223ddfff44101.2x1ddff6677}.

\end{remark}

We sketch here the main ideas and methods used in this article. Our approach underlying the derivation of Theorem \ref{theorem3}
will be based on an entropy-like estimate involving the functional
\begin{equation}
\int_{\Omega}n_\varepsilon^{1+\alpha} +\int_{\Omega}|\nabla c_{\varepsilon}|^{2}
\label{334444zjscz2ddfff.52ddffddffff97x9630222ssdd2114}
\end{equation}
for solutions of certain regularized versions of \dref{1.1} (see Lemma \ref{lemma4556664ddd5630223}),  which is a new estimate
of Keller-Segel-Navier-Stokes system with rotation.  Once this crucial
step has been accomplished, one can derive the global boundedness of solution to \dref{1.1}
%the following Lemma
by an application of well-known  regularization estimates for the parabolic  theory and  Stokes semigroup (see Lemma \ref{lemma45630hhuujj}).

This paper is organized as follows.  In Section 2, we introduce the regularized system of \dref{1.1}, show
 the local existence of solutions to the regularized system of
 \dref{1.1} and give two
preliminary lemmas.  After collecting some basic estimates of the solutions in Section 3, we prove global  existence of  solution to  regularized problems of \dref{1.1} in  Section 4. On the basis of the compactness properties thereby implied, in Section 5 and Section 6 we shall finally
pass to the limit along an adequate sequence of numbers $\varepsilon = \varepsilon_j\searrow0$
and thereby verify the main results.

\section{Preliminaries}

In this section, we give some notations and recall some basic facts which will be
frequently used throughout the paper.
In order to construct solutions of \dref{1.1} through an appropriate approximation,
%
%In order to introduce an appropriate regularization of \dref{1.1},
 let us fix families
$(\rho_\varepsilon)_{\varepsilon\in(0,1)} $ and $(\chi_\varepsilon)_{\varepsilon\in(0,1)} $
of functions
$$\rho_\varepsilon \in C^\infty_0 (\Omega)~~\mbox{such that}~~0\leq\rho_\varepsilon\leq1~~\mbox{in}~~\Omega~~\mbox{and}~~\rho_\varepsilon\nearrow1~~\mbox{in}~~\Omega~~\mbox{as}~~\varepsilon\searrow0$$
and
$$\chi_\varepsilon \in C^\infty_0 ([0,\infty))~~\mbox{such that}~~0\leq\chi_\varepsilon\leq1~~\mbox{in}~~[0,\infty)~~\mbox{and}~~\chi_\varepsilon\nearrow1~~\mbox{in}~~[0,\infty)~~\mbox{as}~~\varepsilon\searrow0,$$
define
\begin{equation}
\begin{array}{ll}
S_\varepsilon(x, n, c) := \rho_\varepsilon(x)\chi_\varepsilon(u)S(x, n, c),~~ x\in\bar{\Omega},~~n\geq0,~~c\geq0
 \end{array}\label{3.10gghhjuuloollyuigghhhyy}
\end{equation}
and consider the following the approximate system
\begin{equation}
 \left\{\begin{array}{ll}
   n_{\varepsilon t}+u_{\varepsilon}\cdot\nabla n_{\varepsilon}=\Delta n_{\varepsilon}-\nabla\cdot(n_{\varepsilon}S_\varepsilon(x, n_{\varepsilon}, c_{\varepsilon})\nabla c_{\varepsilon}),\quad
x\in \Omega,\; t>0,\\
    c_{\varepsilon t}+u_{\varepsilon}\cdot\nabla c_{\varepsilon}=\Delta c_{\varepsilon}-c_{\varepsilon}+n_{\varepsilon},\quad
x\in \Omega,\; t>0,\\
u_{\varepsilon t}+\nabla P_{\varepsilon}=\Delta u_{\varepsilon}-\kappa (Y_{\varepsilon}u_{\varepsilon} \cdot \nabla)u_{\varepsilon}+n_{\varepsilon}\nabla \phi,\quad
x\in \Omega,\; t>0,\\
\nabla\cdot u_{\varepsilon}=0,\quad
x\in \Omega,\; t>0,\\
 \disp{\nabla n_{\varepsilon}\cdot\nu=\nabla c_{\varepsilon}\cdot\nu=0,u_{\varepsilon}=0,\quad
x\in \partial\Omega,\; t>0,}\\
\disp{n_{\varepsilon}(x,0)=n_0(x),c_{\varepsilon}(x,0)=c_0(x),\;u_{\varepsilon}(x,0)=u_0(x)},\quad
x\in \Omega,\\
 \end{array}\right.\label{1.1fghyuisda}
\end{equation}
where
\begin{equation}
 \begin{array}{ll}
 Y_{\varepsilon}w := (1 + \varepsilon A)^{-1}w ~\mbox{for all}~ w\in L^2_{\sigma}(\Omega)
 \end{array}\label{aasddffgg1.1fghyuisda}
\end{equation}
is a standard Yosida approximation.

Let us recall a result on local solvability of \dref{1.1fghyuisda}, which has been established in \cite{Winkler31215} (see also Bellomo et al. \cite{Bellomo1216}) by
means of a suitable
extensibility criterion and a slight modification of the well-established fixed-point arguments (see Lemma 2.1 of \cite{Winkler51215}, \cite{Winkler11215} and Lemma 2.1 of \cite{Painter55677}).

\begin{lemma}\label{lemma70}
Let $\Omega \subseteq \mathbb{R}^2$ be a bounded domain with smooth boundary.
Assume
that $\phi\in W^{1,\infty}(\Omega)$ and the initial data $(n_0,c_0,u_0)$ fulfills \dref{ccvvx1.731426677gg}.
Moreover, let
  $S\in C^2(\bar{\Omega}\times[0,\infty)^2;\mathbb{R}^{2\times2})$
satisfy  \dref{x1.73142vghf48gg} for some $C_S \geq 0$ and $\alpha\geq 0$.
Then for $\varepsilon\in(0,1),$ there exist $T_{max,\varepsilon}\in  (0,\infty]$ and
a classical solution $(n_\varepsilon, c_\varepsilon, u_\varepsilon, P_\varepsilon)$ of \dref{1.1fghyuisda} in
$\Omega\times(0, T_{max,\varepsilon})$ such that
% Then there exists a maximal existence time $T_{max,\varepsilon}\in(0,\infty]$ and a pair of  nonnegative functions
\begin{equation}
 \left\{\begin{array}{ll}
 n_\varepsilon\in C^0(\bar{\Omega}\times[0,T_{max,\varepsilon}))\cap C^{2,1}(\bar{\Omega}\times(0,T_{max,\varepsilon})),\\
  c_\varepsilon\in  C^0(\bar{\Omega}\times[0,T_{max,\varepsilon}))\cap C^{2,1}(\bar{\Omega}\times(0,T_{max,\varepsilon}))\cap\bigcap_{p>1} L^\infty([0,T_{max,\varepsilon}); W^{1,p}(\Omega)),\\
%  w_\varepsilon\in  C^0(\bar{\Omega}\times[0,T_{max,\varepsilon}))\cap C^{2,1}(\bar{\Omega}\times(0,T_{max,\varepsilon}))\cap L^\infty([0,T_{max,\varepsilon}); W^{1,p}(\Omega))~~\mbox{with}~~p>2,\\
  u_\varepsilon\in  C^0(\bar{\Omega}\times[0,T_{max,\varepsilon}))\cap C^{2,1}(\bar{\Omega}\times(0,T_{max,\varepsilon}))\cap \bigcap_{\gamma\in(0,1)}C^0([0,T_{max,\varepsilon}); D(A^\gamma)),\\
  P_\varepsilon\in  C^{1,0}(\bar{\Omega}\times(0,T_{max,\varepsilon}))\\
   \end{array}\right.\label{1.1ddfghyuisda}
\end{equation}
 classically solving \dref{1.1fghyuisda} in $\Omega\times[0,T_{max,\varepsilon})$.
%
%such that $(u,v)$ is a classical solution of model \dref{1.1630} in $\Omega\times[0,T_{max,\varepsilon})$.
Moreover,  $n_\varepsilon$ and $c_\varepsilon$ are nonnegative in
$\Omega\times(0, T_{max,\varepsilon})$, and
that
\begin{equation}
\mbox{if}~~T_{max,\varepsilon}<+\infty,~~\mbox{then}~~\limsup_{t\nearrow T_{max,\varepsilon}}[\|n_\varepsilon(\cdot, t)\|_{L^\infty(\Omega)}+\|c_\varepsilon(\cdot, t)\|_{W^{1,\infty}(\Omega)}+\|A^\gamma u_\varepsilon(\cdot, t)\|_{L^{2}(\Omega)}]=\infty
\label{1.163072x}
\end{equation}
for all $p > 2$ and $\gamma\in ( \frac{1}{2}, 1).$
%where $\gamma$ is given by \dref{ccvvx1.731426677gg}.
%is fulfilled.
\end{lemma}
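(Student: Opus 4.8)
The statement to prove is Lemma~\ref{lemma70}, a \emph{local existence and extensibility} result for the regularized system \dref{1.1fghyuisda}. Since the author explicitly flags this as a standard construction carried out by a fixed-point argument and cites precise external references, the plan is to follow the well-trodden route for chemotaxis--Navier--Stokes approximate systems rather than to invent anything new.

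First I would set the problem up as an abstract evolution equation in a suitable product space. The key observation is that regularization has made every nonlinearity harmless: the sensitivity $S_\varepsilon$ defined in \dref{3.10gghhjuuloollyuigghhhyy} carries the compactly supported cutoffs $\rho_\varepsilon(x)\chi_\varepsilon(u)$, so $|S_\varepsilon|$ is bounded and $S_\varepsilon$ vanishes near $\partial\Omega$, which will also accommodate the no-flux boundary condition on the taxis term; and in the fluid equation the convective term $\kappa(Y_\varepsilon u_\varepsilon\cdot\nabla)u_\varepsilon$ has its velocity smoothed by the Yosida operator $Y_\varepsilon=(1+\varepsilon A)^{-1}$ from \dref{aasddffgg1.1fghyuisda}, so that $Y_\varepsilon u_\varepsilon$ gains regularity and the nonlinearity becomes locally Lipschitz in the relevant norms. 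I would write $n_\varepsilon,c_\varepsilon$ via the Neumann heat semigroup $(e^{t\Delta})_{t\ge0}$ and $u_\varepsilon$ via the Stokes semigroup $(e^{-tA})_{t\ge0}$, projecting the third equation with the Helmholtz--Leray projection to eliminate the pressure $P_\varepsilon$ (which is recovered afterwards).

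The heart of the argument is a contraction mapping in a space such as
\begin{equation}
X_T:=C^0\big([0,T];C^0(\bar\Omega)\big)\times C^0\big([0,T];W^{1,q}(\Omega)\big)\times C^0\big([0,T];D(A^\gamma)\big)
\label{localplanspace}
\end{equation}
for suitable $q>2$ and $\gamma\in(\tfrac12,1)$, on a closed ball around the initial data, with $T$ small. Smoothing estimates for $e^{t\Delta}$ and $e^{-tA}$ (controlling $\|\nabla e^{t\Delta}\cdot\|$ and $\|A^\gamma e^{-tA}\cdot\|$ by integrable singularities $t^{-1/2}$ and $t^{-\gamma}$) turn the Duhamel formulas into a self-map that is a strict contraction for $T=T(\varepsilon,n_0,c_0,u_0)$ small; the fixed point is the desired mild solution. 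Parabolic and Stokes regularity bootstrapping then upgrades the mild solution to the classical regularity claimed in \dref{1.1ddfghyuisda}, and the pressure is reconstructed with $P_\varepsilon\in C^{1,0}$. Nonnegativity of $n_\varepsilon$ and $c_\varepsilon$ follows from the maximum principle, using that $c_0,n_0\ge0$ and that the taxis flux carries a factor $n_\varepsilon$ so the zero set of $n_\varepsilon$ is invariant, while the $c_\varepsilon$-equation has nonnegative source $n_\varepsilon$.

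Finally, the extensibility criterion \dref{1.163072x} is obtained by the standard contradiction: one shows $T$ in the contraction depends only on the size of the data in the norms appearing on the right of \dref{1.163072x}, so if that quantity stayed bounded as $t\nearrow T_{max,\varepsilon}<\infty$ one could restart the solution beyond $T_{max,\varepsilon}$, contradicting maximality. I expect the main technical obstacle to be the \emph{fluid coupling}: verifying that $u\mapsto(Y_\varepsilon u\cdot\nabla)u$ is locally Lipschitz from $D(A^\gamma)$ into the base space and handling the $D(A^\gamma)$-valued Duhamel term so that the Stokes smoothing closes the contraction; here the gain of regularity from $Y_\varepsilon$ and the choice $\gamma>\tfrac12$ (ensuring $D(A^\gamma)\hookrightarrow L^\infty$ with gradient control in two dimensions) are exactly what make the estimate work. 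Since everything of substance is already recorded in \cite{Winkler31215,Winkler51215,Winkler11215,Painter55677}, I would present this as a short proof that adapts those constructions to the present regularized nonlinearities and quotes the extensibility argument verbatim.
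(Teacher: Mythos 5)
Your proposal is correct and follows essentially the same route as the paper: the paper offers no proof of its own for this lemma, but simply recalls it as established in \cite{Winkler31215} (see also \cite{Winkler51215,Winkler11215,Painter55677}) via exactly the semigroup-based Banach fixed-point construction, bootstrap to classical regularity, maximum-principle nonnegativity, and restart-type extensibility argument that you outline. Your sketch correctly identifies the role of the regularizations (the cutoff in $S_\varepsilon$ and the Yosida smoothing $Y_\varepsilon$ making the nonlinearities locally Lipschitz in the norms of the contraction space), which is precisely what the cited works exploit.
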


\begin{lemma}(\cite{Horstmann791})\label{llssdrffmmggnnccvvccvvkkkkgghhkkllvvlemma45630}
If $m\in \{0, 1\}, p\in [1,\infty]$ and
$q \in (1,\infty)$ then with some constant $c_1 > 0$, for all $w\in D((-\Delta+1)^\theta)$ we have
$$
\|\varphi\|_{W^{m,p}(\Omega)}\leq c\|(-\Delta+1)^\theta \varphi\|_{L^{q}(\Omega)}~~~\mbox{provided that}~~m-\frac{N}{p} < 2\theta-\frac{N}{q}.
$$

Moreover, for $p < \infty$ the associated heat semigroup$(e^{t\Delta})_{t\geq0}$ maps $L^p(\Omega)$ into $D((-\Delta+1)^\theta)$ in any of the
spaces $L^q(\Omega)$ for $q\geq p$, and there exist $c > 0$ and $\lambda > 0$ such that the $L^p$-$L^q$ estimates
$$
\|(-\Delta+1)^\theta e^{t(\Delta-1)}\varphi\|_{L^q(\Omega)} \leq c(1+t^{-\theta-\frac{N}{2}(\frac{1}{p}-\frac{1}{q})})e^{-\lambda t}\|\varphi\|_{L^p(\Omega)} ~\mbox{for all}~\varphi\in L^{p}(\Omega)~~\mbox{and}~~t>0
$$
and
$$
\begin{array}{rl}
&\|(-\Delta+1)^\theta e^{t\Delta}\varphi\|_{L^q(\Omega)}\\
 \leq &c(1+t^{-\theta-\frac{N}{2}(\frac{1}{p}-\frac{1}{q})})e^{-\lambda t}\|\varphi\|_{L^p(\Omega)} ~~\mbox{for all}~~t>0~~\mbox{and}~~\varphi\in L^{p}(\Omega)~~\mbox{satisfying}~~\disp\int_{\Omega} w = 0.
\end{array}
$$
In addition,  given $p\in (1,\infty)$, for any $\varepsilon > 0$, there exists $c(\varepsilon) > 0$ such that for all $\mathbb{R}^N$-valued $\varphi\in L^p(\Omega),$
$$
\begin{array}{rl}
\|(-\Delta+1)^\theta e^{t\Delta}\nabla\cdot\varphi\|_{L^p(\Omega)} \leq &c(\varepsilon)(1+t^{-\theta-\frac{1}{2}-\varepsilon})e^{-\lambda t}\|\varphi\|_{L^p(\Omega)} ~~\mbox{for all}~~t>0.
\end{array}
$$
\end{lemma}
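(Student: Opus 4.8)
The plan is to derive every assertion from standard functional-analytic properties of the realization of $-\Delta$ under Neumann boundary conditions on $L^q(\Omega)$. Write $B := -\Delta + 1$ for this realization. Since $\Omega$ is bounded with smooth boundary, $B$ is a positive sectorial operator whose spectrum is contained in $[1,\infty)$, so that its fractional powers $B^\theta$ and the analytic semigroup $(e^{-tB})_{t\geq 0} = (e^{t(\Delta-1)})_{t\geq 0}$ are well defined on each $L^q(\Omega)$ with $q\in(1,\infty)$. For the first inequality I would invoke the characterization of the domains of fractional powers: for the Neumann realization one has $D(B^\theta) = H^{2\theta,q}(\Omega)$, a Bessel potential space, with norm equivalent to $\|B^\theta\cdot\|_{L^q(\Omega)}$, at least for $2\theta$ away from the exceptional value $1+\frac1q$ at which the boundary condition becomes active in the domain. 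The claimed embedding $D(B^\theta)\hookrightarrow W^{m,p}(\Omega)$ is then precisely the Sobolev embedding $H^{2\theta,q}(\Omega)\hookrightarrow W^{m,p}(\Omega)$, which holds exactly under the scaling condition $m-\frac{N}{p} < 2\theta-\frac{N}{q}$.

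For the semigroup estimates I would combine two ingredients. First, analyticity yields the abstract bound $\|B^\theta e^{-tB}\|_{L^q\to L^q}\leq c\,t^{-\theta}$ for $t>0$, which together with the spectral lower bound $1$ sharpens to $\|B^\theta e^{-tB}\|_{L^q\to L^q}\leq c\,t^{-\theta}e^{-\lambda t}$ for any $\lambda<1$. Second, the Neumann heat semigroup enjoys the $L^p$–$L^q$ smoothing $\|e^{-tB}\|_{L^p\to L^q}\leq c\,(1+t^{-\frac{N}{2}(\frac1p-\frac1q)})e^{-\lambda t}$, obtained from Gaussian bounds for the heat kernel. Splitting $B^\theta e^{-tB} = (B^\theta e^{-tB/2})\,e^{-tB/2}$ and composing the two bounds over the two half-time intervals produces the first $L^p$–$L^q$ estimate with singularity $t^{-\theta-\frac{N}{2}(\frac1p-\frac1q)}$.

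For the mean-zero version with the un-shifted semigroup $e^{t\Delta}$ I would note that the constraint $\int_\Omega\varphi = 0$ is preserved along the flow, since $\frac{d}{dt}\int_\Omega e^{t\Delta}\varphi = \int_\Omega \Delta e^{t\Delta}\varphi = 0$ under the Neumann condition. Hence $e^{t\Delta}$ leaves invariant the complement of the constants, on which the generator has spectral gap $\lambda_1>0$ equal to the first nonzero Neumann eigenvalue; repeating the previous composition argument on this invariant subspace yields the decay factor $e^{-\lambda t}$ for any $\lambda<\lambda_1$. For the final divergence-form estimate I would factor $(-\Delta+1)^\theta e^{t\Delta}\nabla\cdot = \bigl(B^{\theta+\frac12}e^{t\Delta}\bigr)\circ\bigl(B^{-\frac12}\nabla\cdot\bigr)$, where $B^{-\frac12}\nabla\cdot$ is bounded on $L^p(\Omega)$ by Mikhlin–Hörmander/Calderón–Zygmund multiplier theory and $B^{\theta+\frac12}e^{t\Delta}$ is controlled by the analytic-semigroup estimate with exponent $\theta+\frac12$. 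Because the boundary conditions attached to $B$ and to the divergence operator do not match exactly, one cannot take the exponent $\theta+\frac12$ cleanly; absorbing this mismatch costs an arbitrarily small power of $t$, which is the source of the extra $\varepsilon$ in the singularity $t^{-\theta-\frac12-\varepsilon}$.

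I expect the main obstacle to be the rigorous identification of the fractional-power domains with Bessel potential spaces under Neumann boundary conditions, together with the multiplier bound for $B^{-\frac12}\nabla\cdot$ and the precise accounting of the $\varepsilon$-loss in the divergence-form case; the remaining steps are a routine composition of analytic-semigroup smoothing with Sobolev embedding and the spectral-gap argument on the mean-zero subspace.
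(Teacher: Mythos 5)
The paper offers no proof of this lemma at all: it is quoted, with citation, from Horstmann--Winkler \cite{Horstmann791} (Lemma 2.1 there; compare also Lemma 1.3 of \cite{Winkler792}), so your sketch can only be measured against the standard proof in those sources --- and in outline it does follow it. For the first assertion, though, you do not need the delicate identification $D(B^\theta)=H^{2\theta,q}(\Omega)$ with its exceptional value $2\theta=1+\frac1q$: the cited source invokes Henry \cite{GHenryHenry4441215}, where the embedding $D(B^\theta)\hookrightarrow W^{m,p}(\Omega)$ under $m-\frac{N}{p}<2\theta-\frac{N}{q}$ is proved directly for any sectorial operator with $D(B)\hookrightarrow W^{2,q}(\Omega)$, the strict inequality absorbing all interpolation losses; your route can be repaired the same way (an embedding $D(B^\theta)\hookrightarrow H^{2\theta',q}(\Omega)$ for every $\theta'<\theta$ suffices), but it imports unnecessary machinery. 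Your treatment of the two $L^p$--$L^q$ smoothing estimates by splitting $B^\theta e^{-tB}=(B^\theta e^{-tB/2})e^{-tB/2}$, and of the decay on the mean-zero subspace via the first nonzero Neumann eigenvalue, is exactly the standard argument and is sound.

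The one genuine soft spot is the divergence-form estimate. Your factorization $(-\Delta+1)^\theta e^{t\Delta}\nabla\cdot=\bigl(B^{\theta+\frac12}e^{t\Delta}\bigr)\circ\bigl(B^{-\frac12}\nabla\cdot\bigr)$ rests on $L^p$-boundedness of $B^{-\frac12}\nabla\cdot$, which you attribute to Mikhlin--H\"ormander/Calder\'on--Zygmund theory; those theorems concern translation-invariant symbols and do not apply directly to the Neumann realization on a bounded domain (the corresponding Riesz-transform bound is true but genuinely nontrivial there), and your subsequent claim that the boundary mismatch ``costs an arbitrarily small power of $t$'' is asserted rather than derived --- as written, the $\varepsilon$ in $t^{-\theta-\frac12-\varepsilon}$ has no identified source. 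The standard proof sidesteps the endpoint entirely: by duality, $B^{-\frac12-\varepsilon}\nabla\cdot$ is bounded on $L^p(\Omega)$ precisely because the \emph{first part of the same lemma}, applied with $m=1$, $q=p'$ and $2\theta=1+2\varepsilon$, yields $D(B^{\frac12+\varepsilon})\hookrightarrow W^{1,p'}(\Omega)$; one then writes $(-\Delta+1)^\theta e^{t\Delta}\nabla\cdot=\bigl(B^{\theta+\frac12+\varepsilon}e^{t\Delta}\bigr)\bigl(B^{-\frac12-\varepsilon}\nabla\cdot\bigr)$, so the $\varepsilon$-loss enters exactly through the strict inequality in the embedding, not through an unquantified boundary correction. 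Finally, note that the factor $e^{-\lambda t}$ here, with the \emph{unshifted} semigroup $e^{t\Delta}$ and no hypothesis on $\varphi$, is available only because $e^{t\Delta}\nabla\cdot\varphi$ has mean zero by the dual construction of this operator; your composition should record that it lands in the invariant mean-zero subspace before the spectral gap is invoked, since $B^{\theta+\frac12}e^{t\Delta}$ does not decay on constants.
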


\begin{lemma}(\cite{Tao41215})\label{lemma630}
Let $T\in(0,\infty]$, %$y\in C^1((0, T )) \cap C^0([0, T ))$ be such that
%Let T > 0,  2 (0, T), a > 0 and b > 0, and suppose that y : [0, T) ! [0,1) is absolutely
%continuous and such that
%%
%%
%%$h \in C^0([0, T )), B > 0, A > 0$ satisfy
%%
%%
%%
$\sigma\in(0,T)$, $A>0$ and $B>0$, and suppose that
$y:[0,T)\rightarrow[0,\infty)$ is absolutely
continuous and such that
\begin{equation}\label{x1.73142hjkl}
\begin{array}{ll}
\displaystyle{
 y'(t)+Ay(t)\leq h(t) ~~\mbox{for a.e.}~~t\in(0,T)}\\
%\displaystyle{y(0) =y_0\geq0} ~~~~~~~~~~~~~~~~~~~~~~~~~~~~~~~~~~~~~~~~~~~~~~~~~~~~~~~~~~~~~~~~~~~\\
\end{array}
\end{equation}
with some nonnegative function $h\in  L^1_{loc}([0, T))$ satisfying
$$\int_{t}^{t+\sigma}h(s)ds\leq B~~\mbox{for all}~~t\in(0,T-\sigma).$$
Then
$$y(t)\leq \max\{y_0+B,\frac{B}{A\tau}+2B\}~~\mbox{for all}~~t\in(0,T).$$
%where $h \in L^1_{loc}(\mathbb{R})$ has the property that
%$$\frac{1}{\tau}\int_{t}^{t+\tau}h(s)ds\leq B~~\mbox{for all}~~t\in(0,T)$$
%with some $\tau > 0$ and $B > 0$. Then
%%with some nonnegative function $h\in  L^1_{loc}([0, T))$ satisfying
%%$$\int_{t}^{t+\sigma}h(s)ds\leq B~~\mbox{for all}~~t\in(0,T-\sigma).$$
%Then
%%
%%
%%$A>0,p>0$ and $B\geq0$.
%%Then we have
%$$y(t)\leq y_0+\frac{B\tau}{1-e^{-A\tau}}~~\mbox{for all}~~t\in(0,T).$$
\end{lemma}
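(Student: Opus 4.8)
The plan is to combine the integrating-factor (Duhamel) form of the linear part $y'+Ay$ with the window hypothesis $\int_t^{t+\sigma}h\le B$, and to split the time axis into a short-time regime $t\le\sigma$ and a long-time regime $t>\sigma$ so that the two entries of the maximum are produced \emph{separately} rather than added together.

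First I would record that $y'+Ay\le h$ gives, after multiplication by $e^{As}$ and integration over $[t_0,t]$, the estimate $y(t)\le e^{-A(t-t_0)}y(t_0)+\int_{t_0}^t e^{-A(t-s)}h(s)\,ds\le e^{-A(t-t_0)}y(t_0)+\int_{t_0}^t h(s)\,ds$ for all $0\le t_0\le t<T$, where I used $e^{-A(t-s)}\le1$ and $h\ge0$. For $t\in(0,\sigma]$ I would simply discard the nonnegative term $Ay$, so that $y'\le h$ integrates to $y(t)\le y_0+\int_0^\sigma h\le y_0+B$ (using the window bound at the base point $0$); this is the first entry $y_0+B$.

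For $t\in(\sigma,T)$ I would run a dichotomy on the window $[t-\sigma,t]$, noting that $\int_{t-\sigma}^t h\le B$ since $t-\sigma\in[0,T-\sigma)$. If there is a point $t_0\in[t-\sigma,t]$ with $y(t_0)\le\frac{B}{A\sigma}+B$, then the Duhamel estimate from this $t_0$, together with $\int_{t_0}^t h\le\int_{t-\sigma}^t h\le B$, yields at once $y(t)\le y(t_0)+B\le\frac{B}{A\sigma}+2B$, the second entry. In the complementary case $y(s)>\frac{B}{A\sigma}+B$ for \emph{all} $s\in[t-\sigma,t]$; then $A\int_{t-\sigma}^t y>A\sigma\cdot\frac{B}{A\sigma}=B$, and integrating $y'+Ay\le h$ over the window gives $y(t)\le y(t-\sigma)+B-A\int_{t-\sigma}^t y<y(t-\sigma)$. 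Thus $y(t)<y(t-\sigma)$, and I would iterate backward in steps of length $\sigma$: at each stage the dichotomy either terminates in the good alternative (capping the value by $\frac{B}{A\sigma}+2B$) or forces a further strict decrease and a shift of the argument by $\sigma$ to the left. After finitely many steps the argument lands in $(0,\sigma]$, where the short-time bound $y\le y_0+B$ applies; chaining the strict inequalities then gives $y(t)\le\max\{y_0+B,\ \frac{B}{A\sigma}+2B\}$ in every case.

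The step demanding the most care is the long-time alternative: integrating the window inequality naively reintroduces the unknown boundary value $y(t-\sigma)$ and leads to a circular, non-contracting recursion, so one must instead use the \emph{strict decrease} $y(t)<y(t-\sigma)$ to drive a finite backward descent that necessarily terminates either at a point where $y$ has already dipped below $\frac{B}{A\sigma}+B$ or inside the initial slab $(0,\sigma]$. No estimate finer than $e^{-A(t-s)}\le1$ and $h\ge0$ is required. (I note in passing that the $\tau$ appearing in the stated bound is the window length $\sigma$.)
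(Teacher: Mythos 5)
The paper does not prove this lemma at all: it is stated as a quoted result from Tao--Winkler \cite{Tao41215}, so there is no in-paper argument to compare against. Your proof is correct and is essentially the standard argument behind the cited result: the dichotomy on the trailing window $[t-\sigma,t]$ --- either $y$ dips below $\frac{B}{A\sigma}+B$ somewhere, in which case $y'\le h$ integrated from that point yields $\frac{B}{A\sigma}+2B$, or else $y>\frac{B}{A\sigma}+B$ throughout, so $A\int_{t-\sigma}^{t}y>B$ and integrating the differential inequality forces the strict decrease $y(t)<y(t-\sigma)$ --- combined with a finite backward descent in steps of $\sigma$ into the initial slab $(0,\sigma]$, where $y\le y_0+B$; the versions in the literature phrase this descent as a first-crossing or infimum contradiction, which is the same mechanism (your Case B in fact gives the quantitative drop $y(t)\le y(t-\sigma)-A\sigma B$, though the strict inequality already suffices for your fixed-step iteration). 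Two cosmetic points: your use of $\int_0^{\sigma}h\le B$ deserves the one-line remark that the hypothesis is stated only for $t\in(0,T-\sigma)$ and extends to $t=0$ by continuity of $t\mapsto\int_t^{t+\sigma}h$ (valid since $h\in L^1_{loc}([0,T))$ and $\sigma<T$), and you are right that the $\tau$ appearing in the stated bound is a misprint for the window length $\sigma$.
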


\section{Some basic priori estimates}

In this section, in order to  establish the global solvability of system \dref{1.1fghyuisda}, we proceed to derive $\varepsilon$-independent estimates for the approximate solutions constructed above. As the first step, we need to establish some important a priori estimates for $n_\varepsilon, c_\varepsilon$
 and $u_\varepsilon$, where throughout
this paper, $(n_\varepsilon, c_\varepsilon, u_\varepsilon, P_\varepsilon)$ is the global solution of problem  \dref{1.1fghyuisda}.

The following basic properties of solutions to \dref{1.1fghyuisda} are immediate.
%The following estimates of $n_{\varepsilon}$ and $c_{\varepsilon}$ are basic but important in the proof of our result.
\begin{lemma}\label{fvfgfflemma45}
The solution of \dref{1.1fghyuisda} satisfies
\begin{equation}
\int_{\Omega}{n_{\varepsilon}}= \int_{\Omega}{n_{0}}~~\mbox{for all}~~ t\in(0, T_{max,\varepsilon})
\label{ddfgczhhhh2.5ghju48cfg924ghyuji}
\end{equation}
as well as
\begin{equation}
\int_{\Omega}{n_{\varepsilon}}\leq \max\{\int_{\Omega}{n_{0}},\int_{\Omega}{c_{0}}\}~~\mbox{for all}~~ t\in(0, T_{max,\varepsilon}).
\label{2344ddfgczhhhh2.5ghju48cfg924ghyuji}
\end{equation}
%Moreover, for any $T\in (0, T_{max,\varepsilon})$, there exists $C > 0$ such that
%\begin{equation}
%\int_0^{T}\int_{\Omega}{n_{\varepsilon}^{2}}\leq C.
%\label{ddczhjjjj2.5ghju48cfg924}
%\end{equation}
%and
%\begin{equation}
%\int_0^{\tau+1}\int_{\Omega}{n_{\varepsilon}^{2}}  \leq \lambda,
%\label{ddcz2.5ghju48cfg924}
%\end{equation}
%%with $0<\nu\leq1.$
%where \begin{equation}
%\tau:=\min\{1,\frac{1}{6}T_{max,\varepsilon}\}.
%\label{ddcz2.5ghju48cfg924vbhu}
%\end{equation}
%and
%$0<\alpha\leq1$.
%
%
%Then there exists a constant $C>0$ such that the first component of the solution of \dref{1.1fghyuisda} satisfies %the following estimate
%%%For any $k\geq1,$
% \begin{equation}\label{eqx45}\int_{\Omega}u(x,t) \leq C~~\mbox{for all}~~ t\in(0,T_{max,\varepsilon}).
%\end{equation}
%for any $q\leq 2r+2.$
%Assume
%$\{y_k(t)\}:=\{|u_k|^{\alpha_k}_{\alpha_k}\}$ be a non-negative absolutely continuous function on $[0,T]$ satisfying
%\begin{equation}\label{eq45}
%\left\{
%\begin{array}{ll}
%\displaystyle{
% \frac{dy_k}{ }+ay^{r_1}_{k}\leq b^k\chi^4_{k-1}}~~~~0<t\leq T,\\
%\displaystyle{0\leq y_k(0) =y_{k,0}\leq M:=\sup_{x\in\Omega}|u_0(x)|} ~~~~~~~~~~~~~~~~~~~~~~~~~~~~~~~~~~~~~~~~~~~~~~~~~~~~~~~~~~~~~~~~~~~\\
%\end{array}
%\right.
%\end{equation}
%with
%$$q_k=2^k+2r-2,~~\alpha_k=\frac{2(q_k+2)}{q_k+2r+2},~~~u_k=|u|^{\frac{q_k+2r+2}{2}}~~(k=1,2,3\cdots),$$
%$$\chi_{k-1}:=\disp\max\{1,\disp{\sup_{t\in[0,T]}}|y_{k-1}(t)|\},b,r_1>1,a>0.$$
%Then
%there exists a positive constant $\tilde{R}$ independent
%of  $k$, such that
%\begin{equation*}
%|u|_{\infty}\leq
%\tilde{R}.
%\end{equation*}
\end{lemma}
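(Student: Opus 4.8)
The plan is to derive both statements from a single mass balance obtained by testing the first equation of the regularized problem \dref{1.1fghyuisda} against the constant function $1$, with the second inequality then following as an immediate corollary of the first.

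First I would integrate the identity $n_{\varepsilon t}+u_{\varepsilon}\cdot\nabla n_{\varepsilon}=\Delta n_{\varepsilon}-\nabla\cdot(n_{\varepsilon}S_\varepsilon(x,n_{\varepsilon},c_{\varepsilon})\nabla c_{\varepsilon})$ over $\Omega$ and account for each term using the boundary and divergence-free conditions in \dref{1.1fghyuisda}. The diffusion term vanishes because $\int_\Omega\Delta n_\varepsilon=\int_{\partial\Omega}\nabla n_\varepsilon\cdot\nu=0$ by the homogeneous Neumann condition. The convective term vanishes as well: writing $u_\varepsilon\cdot\nabla n_\varepsilon=\nabla\cdot(n_\varepsilon u_\varepsilon)$, which is legitimate since $\nabla\cdot u_\varepsilon=0$, and invoking $u_\varepsilon=0$ on $\partial\Omega$ gives $\int_\Omega u_\varepsilon\cdot\nabla n_\varepsilon=\int_{\partial\Omega}n_\varepsilon\,u_\varepsilon\cdot\nu=0$. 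For the taxis term, the crucial structural point is that the spatial cutoff $\rho_\varepsilon$ appearing in the definition \dref{3.10gghhjuuloollyuigghhhyy} of $S_\varepsilon$ belongs to $C_0^\infty(\Omega)$, so $S_\varepsilon$ vanishes in a full neighborhood of $\partial\Omega$; consequently $\int_\Omega\nabla\cdot(n_\varepsilon S_\varepsilon\nabla c_\varepsilon)=\int_{\partial\Omega}(n_\varepsilon S_\varepsilon\nabla c_\varepsilon)\cdot\nu=0$ automatically. Collecting these contributions leaves $\frac{d}{dt}\int_\Omega n_\varepsilon=0$ on $(0,T_{max,\varepsilon})$, and integrating in time against $n_\varepsilon(\cdot,0)=n_0$ yields the conservation identity \dref{ddfgczhhhh2.5ghju48cfg924ghyuji}.

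The bound \dref{2344ddfgczhhhh2.5ghju48cfg924ghyuji} then requires no further estimation. Indeed, \dref{ddfgczhhhh2.5ghju48cfg924ghyuji} gives $\int_\Omega n_\varepsilon=\int_\Omega n_0$ for every $t\in(0,T_{max,\varepsilon})$, and since trivially $\int_\Omega n_0\leq\max\{\int_\Omega n_0,\int_\Omega c_0\}$, the asserted inequality is immediate.

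I do not anticipate any genuine obstacle. The only point meriting attention is the vanishing of the boundary flux in the taxis term, which is secured here not by an imposed no-flux condition on $n_\varepsilon S_\varepsilon\nabla c_\varepsilon$ (the approximate problem \dref{1.1fghyuisda} prescribes only $\nabla n_\varepsilon\cdot\nu=0$), but rather by the compact support of $\rho_\varepsilon$ deliberately built into the regularization \dref{3.10gghhjuuloollyuigghhhyy}.
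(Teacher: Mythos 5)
Your derivation of the conservation identity \dref{ddfgczhhhh2.5ghju48cfg924ghyuji} is correct and is exactly the argument the paper treats as immediate (it offers no written proof): integrate the first equation over $\Omega$, kill the diffusion term by the Neumann condition, the convection term by $\nabla\cdot u_\varepsilon=0$ together with $u_\varepsilon=0$ on $\partial\Omega$, and the taxis flux by the compact support of $\rho_\varepsilon$ built into \dref{3.10gghhjuuloollyuigghhhyy}. Your observation that the approximate problem prescribes only $\nabla n_\varepsilon\cdot\nu=0$, so that it is the spatial cutoff in $S_\varepsilon$ rather than an imposed no-flux condition that annihilates the boundary term, is a genuinely careful point.

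The trouble is with the second assertion. You read \dref{2344ddfgczhhhh2.5ghju48cfg924ghyuji} literally and dispatch it as a trivial corollary of the first identity; but read literally it is strictly weaker than what you just proved, so the lemma's second clause would carry no information at all, and that should have raised a flag. The statement contains an evident typo: the intended inequality is $\int_\Omega c_\varepsilon \leq \max\{\int_\Omega n_0,\int_\Omega c_0\}$, and this is the bound the paper actually needs downstream --- for instance, the Gagliardo--Nirenberg step \dref{123cz2.57151hhddfsdffffkkhhhjddffffgukildrftjj} in the proof of Lemma \ref{aasslemmafggg78630jklhhjj} bounds $\|c_\varepsilon^{p/2}\|_{L^{2/p}(\Omega)}^{2/p}=\int_\Omega c_\varepsilon$ by a constant, which the mass identity for $n_\varepsilon$ alone cannot supply. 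Proving the intended inequality needs one more (still elementary) step absent from your proposal: integrate the second equation of \dref{1.1fghyuisda} over $\Omega$ (the term $u_\varepsilon\cdot\nabla c_\varepsilon$ integrates to zero exactly as before, since $\nabla\cdot u_\varepsilon=0$ and $u_\varepsilon=0$ on $\partial\Omega$) to obtain
$$\frac{d}{dt}\int_\Omega c_\varepsilon+\int_\Omega c_\varepsilon=\int_\Omega n_\varepsilon=\int_\Omega n_0,$$
and then apply an ODE comparison: the solution of $y'+y=\int_\Omega n_0$ with $y(0)=\int_\Omega c_0$ satisfies $y(t)\leq\max\{\int_\Omega n_0,\int_\Omega c_0\}$ for all $t\in(0,T_{max,\varepsilon})$. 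Adding this computation closes the gap; as written, your proof is sound for the letter of the statement but misses the substantive half of the lemma as the paper intends and uses it.
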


With the above Lemma at hand, a series of straightforward integrations by parts will lead to the following
energy-type equality which,  was already used in Lemma 3.3 in \cite{Zhengssdddd00} (see also \cite{Zhenddddgssddsddfff00,Wang23421215}).

\begin{lemma}\label{lemmajddggmk43025xxhjklojjkkk}
Let $\alpha>0$.
Then there exists $C>0$ independent of $\varepsilon$ such that the solution of \dref{1.1fghyuisda} satisfies
\begin{equation}
\begin{array}{rl}
&\disp{\int_{\Omega}n_{\varepsilon}+\int_{\Omega} n_{\varepsilon}^{2\alpha}+\int_{\Omega}   c_{\varepsilon}^2+\int_{\Omega}  | {u_{\varepsilon}}|^2\leq C~~~\mbox{for all}~~ t\in (0, T_{max,\varepsilon}).}\\
\end{array}
\label{czfvgb2.5ghhjuyuccvviihjj}
\end{equation}
%for all $t\in(0, T_{max,\varepsilon})$.
Moreover, for all $t\in(0, T_{max,\varepsilon}-\tau)$,
it holds that
%In addition,
%%Then
%for each $T\in(0, T_{max,\varepsilon})$,
one can find a constant $C > 0$ independent of $\varepsilon$ such that
\begin{equation}
\begin{array}{rl}
&\disp{\int_{t}^{t+\tau}\int_{\Omega} \left[  n_{\varepsilon}^{2\alpha-2} |\nabla {n_{\varepsilon}}|^2+ |\nabla {c_{\varepsilon}}|^2+ |\nabla {u_{\varepsilon}}|^2\right]\leq C,}\\
\end{array}
\label{bnmbncz2.5ghhjuyuivvbnnihjj}
\end{equation}
where $\tau=\min\{1,\frac{1}{6}T_{max,\varepsilon}\}.$
\end{lemma}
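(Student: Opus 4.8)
The plan is to establish the claimed bounds by testing the three equations of the regularized system \dref{1.1fghyuisda} against suitable multipliers and assembling the resulting differential inequalities. First I would derive an energy identity. Testing the $n_\varepsilon$-equation by $n_\varepsilon^{2\alpha-1}$ (legitimate since $\alpha>0$) produces, after integration by parts, a good dissipation term of the form $\int_\Omega n_\varepsilon^{2\alpha-2}|\nabla n_\varepsilon|^2$ together with a cross term $\int_\Omega n_\varepsilon^{2\alpha-1} S_\varepsilon \nabla c_\varepsilon\cdot\nabla n_\varepsilon$. Crucially, because the transport term $u_\varepsilon\cdot\nabla n_\varepsilon$ integrates to zero by $\nabla\cdot u_\varepsilon=0$ and the no-flux boundary conditions, the fluid does not contribute here. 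Using \dref{x1.73142vghf48gg}, the saturation factor $(1+n_\varepsilon)^{-\alpha}$ lets me estimate $|n_\varepsilon^{2\alpha-1}S_\varepsilon\nabla c_\varepsilon|\lesssim n_\varepsilon^{\alpha-1}|\nabla c_\varepsilon|$, so that by Young's inequality the cross term can be absorbed into the gradient dissipation at the cost of a term controlled by $\int_\Omega|\nabla c_\varepsilon|^2$. This is exactly the structural gain from $\alpha>0$.

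Second I would test the $c_\varepsilon$-equation by $-\Delta c_\varepsilon$ to control $\int_\Omega|\nabla c_\varepsilon|^2$; this yields $\frac{1}{2}\frac{d}{dt}\int_\Omega|\nabla c_\varepsilon|^2+\int_\Omega|\Delta c_\varepsilon|^2+\int_\Omega|\nabla c_\varepsilon|^2$ on the left, against the production term $-\int_\Omega n_\varepsilon\Delta c_\varepsilon$ and a convection term $\int_\Omega(u_\varepsilon\cdot\nabla c_\varepsilon)\Delta c_\varepsilon$. The production term is handled by Young's inequality, splitting off $\int_\Omega|\Delta c_\varepsilon|^2$ against $\int_\Omega n_\varepsilon^2$, which in the regime at hand is dominated by $\int_\Omega n_\varepsilon^{2\alpha}$ up to lower order. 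For the velocity, testing the Stokes-type equation by $u_\varepsilon$ and using $\nabla\cdot u_\varepsilon=0$ eliminates the pressure and the convection term $\kappa(Y_\varepsilon u_\varepsilon\cdot\nabla)u_\varepsilon$ (again by divergence-freeness of $Y_\varepsilon u_\varepsilon$), leaving $\frac{1}{2}\frac{d}{dt}\int_\Omega|u_\varepsilon|^2+\int_\Omega|\nabla u_\varepsilon|^2=\int_\Omega n_\varepsilon u_\varepsilon\cdot\nabla\phi$, whose right-hand side is controlled via \dref{dd1.1fghyuisdakkkllljjjkk} together with the Poincaré inequality and $\int_\Omega n_\varepsilon^2\lesssim\int_\Omega n_\varepsilon^{2\alpha}$ type bounds. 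I would then form a suitable linear combination of these three inequalities with small coefficients so that all the bad coupling terms are absorbed, producing a single inequality of the form $y'(t)+A\,y(t)\le h(t)$ for $y=\int_\Omega n_\varepsilon^{2\alpha}+\int_\Omega|\nabla c_\varepsilon|^2+\int_\Omega|u_\varepsilon|^2$, with $A>0$ and $h$ controlled by the mass bound \dref{ddfgczhhhh2.5ghju48cfg924ghyuji}.

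Third, invoking the mass conservation and total-mass bound of Lemma \ref{fvfgfflemma45}, together with the uniform-in-time integral inequality of Lemma \ref{lemma630}, I would conclude the uniform bound \dref{czfvgb2.5ghhjuyuccvviihjj}. The spacetime dissipation estimate \dref{bnmbncz2.5ghhjuyuivvbnnihjj} then follows by integrating the assembled differential inequality over a time window of length $\tau$ and using the already-established $L^\infty$-in-time bound on $y$ to control the boundary contributions $y(t+\tau)-y(t)$.

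The main obstacle I anticipate is the chemotactic cross term in the first step: whether the gain $(1+n_\varepsilon)^{-\alpha}$ from the saturation hypothesis is strong enough to let the term $\int_\Omega n_\varepsilon^{2\alpha-1}S_\varepsilon\nabla c_\varepsilon\cdot\nabla n_\varepsilon$ be split entirely into the favorable dissipation $\int_\Omega n_\varepsilon^{2\alpha-2}|\nabla n_\varepsilon|^2$ plus a quantity controllable solely by $\int_\Omega|\nabla c_\varepsilon|^2$, \emph{without} leaving a residual superlinear power of $n_\varepsilon$ that the $c_\varepsilon$- and $u_\varepsilon$-estimates cannot absorb. This is precisely where the value of the exponent $\alpha$ enters and where the convexity of $\Omega$ was exploited in \cite{Wang23421215} to control boundary integrals arising in the $-\Delta c_\varepsilon$ testing; here one must instead handle the boundary term $\int_{\partial\Omega}\frac{\partial|\nabla c_\varepsilon|^2}{\partial\nu}$ on a possibly non-convex domain, which requires a trace/interpolation argument (for instance bounding it by $\|\nabla c_\varepsilon\|_{L^2(\partial\Omega)}^2$ and absorbing via the $H^2$-norm of $c_\varepsilon$) rather than simply discarding it by a sign condition. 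Getting this residual estimate to close with the specific choice of coefficients is the delicate heart of the argument.
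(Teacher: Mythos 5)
Your overall architecture --- testing the first equation with $n_\varepsilon^{2\alpha-1}$, the third with $u_\varepsilon$, and closing via an ODE/Gronwall lemma --- matches the argument this paper actually relies on (note the paper gives no in-text proof of Lemma \ref{lemmajddggmk43025xxhjklojjkkk}: it quotes it as ``a series of straightforward integrations by parts'' already carried out in Lemma 3.3 of \cite{Zhengssdddd00}). The decisive difference is that there the second equation is tested with $c_\varepsilon$ itself, not with $-\Delta c_\varepsilon$, and your choice of $-\Delta c_\varepsilon$ creates two genuine gaps. First, your handling of the production term rests on the claim that $\int_\Omega n_\varepsilon^2$ ``is dominated by $\int_\Omega n_\varepsilon^{2\alpha}$ up to lower order''. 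That is false for $\alpha<1$ (one has $n^2\geq n^{2\alpha}$ wherever $n\geq 1$), and small $\alpha$ is exactly the regime the theorem is about. The only tool available at this stage to control $\int_\Omega n_\varepsilon^2$ is Gagliardo--Nirenberg against the dissipation $\int_\Omega n_\varepsilon^{2\alpha-2}|\nabla n_\varepsilon|^2=\alpha^{-2}\|\nabla n_\varepsilon^{\alpha}\|_{L^2(\Omega)}^2$ together with the mass bound \dref{ddfgczhhhh2.5ghju48cfg924ghyuji}, which gives $\int_\Omega n_\varepsilon^2\leq C m\|\nabla n_\varepsilon^{\alpha}\|_{L^2(\Omega)}^{1/\alpha}+Cm^2$; the exponent $1/\alpha$ can be absorbed by Young into $\delta\|\nabla n_\varepsilon^{\alpha}\|_{L^2(\Omega)}^2$ only when $\alpha\geq\tfrac12$, so your scheme does not close for $0<\alpha<\tfrac12$. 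This is precisely why the paper defers the $-\Delta c_\varepsilon$ testing to Lemma \ref{lemma4556664ddd5630223}, where the functional is $\int_\Omega n_\varepsilon^{1+\alpha}$ and its dissipation controls the larger quantity $\int_\Omega n_\varepsilon^{2+\alpha}$, so that $\int_\Omega n_\varepsilon^2$ is absorbable for every $\alpha>0$. Second, circularity: the convection term $\int_\Omega(u_\varepsilon\cdot\nabla c_\varepsilon)\Delta c_\varepsilon$ is estimated (via $\|\nabla c_\varepsilon\|_{L^4(\Omega)}^2\leq C\|\Delta c_\varepsilon\|_{L^2(\Omega)}\|\nabla c_\varepsilon\|_{L^2(\Omega)}$) by $\tfrac14\|\Delta c_\varepsilon\|_{L^2(\Omega)}^2+C\|\nabla u_\varepsilon\|_{L^2(\Omega)}^2\|\nabla c_\varepsilon\|_{L^2(\Omega)}^2$, and the resulting Gronwall coefficient requires $\int_t^{t+\tau}\|\nabla u_\varepsilon\|_{L^2(\Omega)}^2\leq C$ --- which is part of the conclusion \dref{bnmbncz2.5ghhjuyuivvbnnihjj} you are trying to prove. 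In the paper this term is only confronted in Lemma \ref{lemma4556664ddd5630223}, exactly because \dref{bnmbncz2.5ghhjuyuivvbnnihjj} is by then available as input.

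The repair is to stay at the $L^2$ level for $c_\varepsilon$: testing with $c_\varepsilon$ produces directly the quantities claimed in \dref{czfvgb2.5ghhjuyuccvviihjj} and \dref{bnmbncz2.5ghhjuyuivvbnnihjj} and no $\Delta c_\varepsilon$ ever appears; the couplings $\int_\Omega n_\varepsilon c_\varepsilon$ and $\int_\Omega n_\varepsilon u_\varepsilon\cdot\nabla\phi$ are then handled by H\"{o}lder with an exponent $p=p(\alpha)>1$ close to $1$, Gagliardo--Nirenberg for $\|n_\varepsilon\|_{L^p(\Omega)}$ (through $n_\varepsilon^{\alpha}$), for $\|c_\varepsilon\|_{L^{p'}(\Omega)}$ (using \dref{2344ddfgczhhhh2.5ghju48cfg924ghyuji}) and for $\|u_\varepsilon\|_{L^{p'}(\Omega)}$ (Poincar\'{e}/Sobolev, $u_\varepsilon\in W_0^{1,2}$), followed by Young; the absorption condition reads $p<\min\left\{\frac{1-\alpha}{1-2\alpha},\frac{1}{1-\alpha}\right\}$ (the first constraint void for $\alpha\geq\tfrac12$), which is satisfiable for every $\alpha>0$, after which Lemma \ref{lemma630} yields \dref{czfvgb2.5ghhjuyuccvviihjj} and time-integration yields \dref{bnmbncz2.5ghhjuyuivvbnnihjj}. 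Two smaller points: for $0<\alpha<\tfrac12$ the coefficient $2\alpha-1$ of your dissipation term is negative, so you must test with $\frac{1}{2\alpha-1}n_\varepsilon^{2\alpha-1}$ (equivalently use the functional $\frac{1}{2\alpha(2\alpha-1)}\int_\Omega n_\varepsilon^{2\alpha}$, bounded from below by mass and H\"{o}lder since $2\alpha\leq1$); and your worry about the boundary term $\int_{\partial\Omega}\partial_\nu|\nabla c_\varepsilon|^2$ on non-convex domains is moot, since with $\partial_\nu c_\varepsilon=0$ one integrates by parts as $\int_\Omega\nabla c_\varepsilon\cdot\nabla\Delta c_\varepsilon=-\int_\Omega|\Delta c_\varepsilon|^2$ with no boundary contribution; convexity only matters for the functionals $\int_\Omega|\nabla c_\varepsilon|^{2q}$, $q>1$, of \cite{Wang23421215}, which this paper's method deliberately avoids.
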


%\section{Global Solvability of the Regularized Problem \dref{1.1fghyuisda}}
%The main task of this section is to prove the global solvability of the regularized problem \dref{1.1fghyuisda}.
%To this end, first, we  need to establish some $\varepsilon$-dependent estimates for $n_{\varepsilon}$, $c_{\varepsilon},$ and $u_{\varepsilon}$.

%\subsection{A priori estimates for the regularized problem \dref{1.1fghyuisda} that depends on $\varepsilon$}

%In this subsection,  we  obtain some regularity properties for $n_\varepsilon,$ $c_\varepsilon,$ and $u_\varepsilon$
%in the following form on the basis of Lemma \ref{lemmajddggmk43025xxhjklojjkkk}.

In order to extract helpful boundedness information concerning $n_{\varepsilon}$, we first require
estimates for higher norms of $c_{\varepsilon}$ on the basis of Lemma \ref{lemmajddggmk43025xxhjklojjkkk}.

\begin{lemma}\label{aasslemmafggg78630jklhhjj}
 Let  $(n_\varepsilon,c_\varepsilon,u_\varepsilon)$ be the solution of \dref{1.1ddfghyuisda} and $\tau=\min\{1,\frac{1}{6}T_{max,\varepsilon}\}.$
If
% $p>1 $ and
 there  exists $K$ %such that %and $\tilde{K}$
\begin{equation}\int_{t}^{t+\tau}\left(\| \nabla{ n_{\varepsilon}^{\alpha}}\|^{2}_{L^{2}(\Omega)}\right)ds\leq K~~ \mbox{for all}~~ t\in(0,T_{max,\varepsilon}-\tau),
\label{3.10gghhjuuloollgghhhy}
\end{equation}
then for any
  $q>2$ %and
% $l\in[1,+\infty)$ and  be such that
% \begin{equation}
%\begin{array}{ll}
%l<\frac{3r}{3-r}~~\mbox{if}~~
%r\leq3,\\
%%l\leq\infty~~\mbox{if}~~
%%r>3.
% \end{array}\label{3.10gghhjuulooll}
%\end{equation}
%If
then there exists $C: = C(q,K)$ independent of $\varepsilon$ such that
 \begin{equation}\|c_{\varepsilon}(\cdot, t)\|_{L^q(\Omega)}\leq C~~ \mbox{for all}~~ t\in(0,T_{max,\varepsilon}).
\label{3.10gghhjuuloollgghhhyhh}
\end{equation}
%and
% \begin{equation}\|w_{\varepsilon}(\cdot, t)\|_{L^q(\Omega)}\leq C~~ \mbox{for all}~~ t\in(0,T_{max,\varepsilon}).
%\label{3.10gghhjuuloollddgggghhhhghhhyhh}
%\end{equation}
\end{lemma}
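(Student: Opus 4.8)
The plan is to test the second equation of the regularized system \dref{1.1fghyuisda} against $qc_\varepsilon^{q-1}$ and track the resulting energy identity. Since $\nabla\cdot u_\varepsilon=0$ and $u_\varepsilon=0$ on $\partial\Omega$, the convective contribution $\int_\Omega(u_\varepsilon\cdot\nabla c_\varepsilon)c_\varepsilon^{q-1}=\frac{1}{q}\int_\Omega u_\varepsilon\cdot\nabla(c_\varepsilon^q)$ vanishes, and an integration by parts in the diffusion term turns $-\int_\Omega c_\varepsilon^{q-1}\Delta c_\varepsilon$ into a multiple of $\int_\Omega|\nabla c_\varepsilon^{q/2}|^2$. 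This produces the differential identity
\[
\frac{d}{dt}\int_\Omega c_\varepsilon^q+\frac{4(q-1)}{q}\int_\Omega\big|\nabla c_\varepsilon^{q/2}\big|^2+q\int_\Omega c_\varepsilon^q=q\int_\Omega n_\varepsilon c_\varepsilon^{q-1},
\]
so that everything reduces to controlling the production integral on the right by the dissipation term $\int_\Omega|\nabla c_\varepsilon^{q/2}|^2$ together with a source that is integrable in time.

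Next I would quantify the information available on $n_\varepsilon$. Combining the hypothesis \dref{3.10gghhjuuloollgghhhy} with the bound $\int_\Omega n_\varepsilon^{2\alpha}\le C$ from Lemma \ref{lemmajddggmk43025xxhjklojjkkk}, the function $n_\varepsilon^\alpha$ is bounded in $L^2((t,t+\tau);H^1(\Omega))$ uniformly in $\varepsilon$ and $t$. Since $\Omega\subset\mathbb{R}^2$, the embedding $H^1(\Omega)\hookrightarrow L^p(\Omega)$ (valid for every finite $p$) yields $\int_t^{t+\tau}\|n_\varepsilon\|_{L^{\alpha p}(\Omega)}^{2\alpha}\,ds\le C(p)$ for each $p<\infty$; interpolating against the conserved mass $\|n_\varepsilon(\cdot,t)\|_{L^1(\Omega)}=\int_\Omega n_0$ from Lemma \ref{fvfgfflemma45} then upgrades this to a family of space–time bounds $\int_t^{t+\tau}\|n_\varepsilon\|_{L^{s}(\Omega)}^{\mu}\,ds\le C(s,\mu)$ covering a range of admissible pairs $(s,\mu)$, the price of larger $\mu$ being smaller $s$.

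With these preparations I would estimate the production term for the fixed $q>2$ of the statement. Writing $v_\varepsilon:=c_\varepsilon^{q/2}$, so that $\|v_\varepsilon\|_{L^2(\Omega)}^2=\int_\Omega c_\varepsilon^q$, I apply Hölder's inequality in the form $\int_\Omega n_\varepsilon c_\varepsilon^{q-1}\le\|n_\varepsilon\|_{L^s(\Omega)}\,\|v_\varepsilon\|_{L^{(2-2/q)s'}(\Omega)}^{2-2/q}$ and then the two–dimensional Gagliardo–Nirenberg inequality to the last factor, interpolating $\|v_\varepsilon\|_{L^{(2-2/q)s'}}$ between $\|\nabla v_\varepsilon\|_{L^2}$ and $\|v_\varepsilon\|_{L^2}$. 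Because $2-2/q<2$, the resulting power of $\|\nabla v_\varepsilon\|_{L^2}$ is strictly below $2$, so a Young inequality absorbs the gradient contribution into $\frac{4(q-1)}{q}\int_\Omega|\nabla v_\varepsilon|^2$ on the left and leaves a bound $q\int_\Omega n_\varepsilon c_\varepsilon^{q-1}\le\frac{1}{2}\cdot\frac{4(q-1)}{q}\int_\Omega|\nabla v_\varepsilon|^2+C g_\varepsilon(t)\,y_\varepsilon(t)^{\gamma}$ with $y_\varepsilon:=\int_\Omega c_\varepsilon^q$, some $\gamma\in(0,1)$, and $g_\varepsilon$ a fixed power of $\|n_\varepsilon\|_{L^s(\Omega)}$. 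A further Young step turns $C g_\varepsilon y_\varepsilon^{\gamma}$ into $\frac{q}{2}y_\varepsilon+h_\varepsilon(t)$, so that the energy identity collapses to
\[
y_\varepsilon'(t)+\frac{q}{2}\,y_\varepsilon(t)\le h_\varepsilon(t),\qquad \int_t^{t+\tau}h_\varepsilon(s)\,ds\le B,
\]
with $B$ independent of $\varepsilon$; Lemma \ref{lemma630}, together with the finite initial value $y_\varepsilon(0)=\int_\Omega c_0^q<\infty$ guaranteed by $c_0\in W^{2,\infty}(\Omega)$, then gives the claimed $\varepsilon$–independent bound \dref{3.10gghhjuuloollgghhhyhh} for $\|c_\varepsilon(\cdot,t)\|_{L^q(\Omega)}$.

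The main obstacle is the exponent bookkeeping in the third step: I must select the Hölder exponent $s$ so that simultaneously the Gagliardo–Nirenberg exponent on $\|\nabla v_\varepsilon\|_{L^2}$ stays below $2$ (permitting absorption into the dissipation) and the power of $\|n_\varepsilon\|_{L^s}$ left in $h_\varepsilon$ has a time exponent lying within the $L^{2\alpha}$–in–time control inherited from the hypothesis. For small $\alpha$ the latter requirement is tight, and it is exactly here that the interpolation against the conserved mass $\|n_\varepsilon\|_{L^1}$ is indispensable, since it lets me trade spatial for temporal integrability of $n_\varepsilon$. Verifying that both constraints can be met for the given $q>2$ under the sole hypothesis $\alpha>0$ is the crux of the argument.
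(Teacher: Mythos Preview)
Your proposal is correct and follows essentially the same route as the paper: test the $c_\varepsilon$-equation by $c_\varepsilon^{q-1}$, estimate the production term by H\"older and the two-dimensional Gagliardo--Nirenberg inequality, absorb the gradient by Young, control the remaining forcing in $L^1_t$ on windows of length $\tau$ via the hypothesis together with mass conservation, and close with Lemma~\ref{lemma630}. The only cosmetic difference is that the paper interpolates $c_\varepsilon^{q/2}$ against the a~priori bounded quantity $\|c_\varepsilon\|_{L^1}$ (from Lemma~\ref{fvfgfflemma45}) rather than against $y_\varepsilon^{1/2}$, which lets it write down the explicit H\"older exponent $\frac{p-2\alpha}{p-4\alpha}$ for $p>\max\{1+4\alpha,2\}$ and skip your second Young step; your variant works just as well once $s$ is taken close to $1$.
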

\begin{proof}
For all $p>\max\{1+4\alpha,2\}$,
testing  the second equation of
$\dref{1.1fghyuisda}$ by ${c^{p-1}_{\varepsilon}}$
 and combining with the second equation and using $\nabla\cdot u_{\varepsilon}=0$, we have, using the integration by parts, that
% we obtain
%\begin{equation}
%\begin{array}{rl}
%&\disp{\frac{1}{{p}}\frac{d}{dt}\|n\|^{{{p}}}_{L^{{p}}(\Omega)}+
%(p-1)\int_{\Omega}{n^{p-2}}|{\nabla} {n}|^2 }
%\\
%=&\disp{(p-1)\int_\Omega  {n}^{p-1}{\nabla} {n}\cdot\nabla c +a\int_{\Omega}n^{p}- b\int_{\Omega}{n^{p+1}}}\\
%\leq&\disp{(p-1)C_S\int_\Omega n^{p-\alpha-1}|{\nabla} {n}||\nabla c| +a\int_{\Omega}n^{p}- b\int_{\Omega}{n^{p+1}}}\\
%\leq&\disp{\frac{C_D(p-1)}{2}\int_{\Omega}n^{p-2} |\nabla n|^2+\frac{(p-1)C_S^2}{2C_D}\int_\Omega{n}^{p+1-m-2\alpha}|\nabla c|^2}\\
%&+ \disp{a\int_{\Omega}n^{p}- b\int_{\Omega}{n^{p+1}},}\\
%%\leq&\disp{\frac{(p-1)}{m+p-1}\int_\Omega(n+1)^{m+p}+
%%(p-1)(\|\nabla c\|^{2}_{L^2(\Omega)}+\|\nabla u\|^{2}_{L^2(\Omega)}+\frac{1}{4}\int_\Omega  n^{p-2}|{\nabla} {n}|^{2})}\\
%%&+ \disp{(p-1)\int_\Omega (n+1)^{3(2q-3m-p-1)}+(a+2b)\int_{\Omega}n^{p}+b\int_{\Omega}{n}^{p-1}- b\int_{\Omega}{n^{p+1}}}\\
%%\leq&\disp{(p-1)\int_\Omega  (n+1)^{p-q-1}|{\nabla} {n}||\nabla c|  .}\\
%%\leq&\disp{\int_\Omega (|f(x,t)|+L)|u|^{q+1}  }\\
%%\leq&\disp{\int_\Omega (k_1|u|^\alpha+k_2)|u|^{q+1}  }\\
%%=&\disp{\int\int\triangle J(x-y)u(y)(u|u|^q(x))dy .}
%%\leq&\disp{(|g|_{L^\infty(0,\omega; L^\infty(\Omega))}+1)(|\Omega|+1)^{\frac{1}{2}}|u|^{q+1}_{q+2}.}\\
%%\leq&\disp{\frac{q+1}{q+2}((|f|_{L^\infty(0,\omega; L^\infty(\Omega))}+1)(|\Omega|+1)^{\frac{1}{2}})^{\frac{q+2}{q+1}}|u|^{q+2}_{q+2}+\frac{1}{q+2}}\\
%%\leq&\disp{\frac{q+1}{q+2}(|f|_{L^\infty(0,\omega; L^\infty(\Omega))}+1)^2(|\Omega|+1)|u|^{q+2}_{q+2}+\frac{1}{q+2}.}\\
%\end{array}
%\label{cz2.5ghju48}
%\end{equation}
\begin{equation}
\begin{array}{rl}
&\disp{\frac{1}{p}\frac{d}{dt}\int_{\Omega}c^{{{p}}}_{\varepsilon}+({{p}-1})\int_{\Omega}c^{{{p}-2}}_{\varepsilon}|\nabla c_{\varepsilon}|^2+\int_{\Omega}c^{{{p}}}_{\varepsilon}}\\
=&\disp{\int_\Omega c^{p-1}_{\varepsilon}n_{\varepsilon}}\\
\leq&\disp{\|n_{\varepsilon}\|_{L^\frac{p-2\alpha}{p-4\alpha}(\Omega)}\left(\int_\Omega c^{\frac{(p-1)(p-2\alpha)}{{2\alpha}}}_{\varepsilon}\right)^{\frac{{2\alpha}}{p-2\alpha}}~~\mbox{for all}~~t\in(0,T_{max,\varepsilon})}\\
%\leq&\disp{\frac{1}{2}\int_\Omega  c^{p}+\frac{p-1}{p}(\frac{1}{2} p)^{-\frac{1}{p-1}}\int_\Omega  n^{p}~~\mbox{for all}~~ t\in(0,T_{max,\varepsilon}),}\\
%\leq&\disp{\int_\Omega n^{p-\alpha}|\Delta c| ,}\\
%\leq&\disp{\int_\Omega (|f(x,t)|+L)|u|^{q+1} }\\
%\leq&\disp{\int_\Omega (k_1|u|^\alpha+k_2)|u|^{q+1} }\\
%=&\disp{\int\int\triangle J(x-y)u(y)(u|u|^q(x))dydx.}
%\leq&\disp{(|g|_{L^\infty(0,\omega; L^\infty(\Omega))}+1)(|\Omega|+1)^{\frac{1}{2}}|u|^{q+1}_{q+2}.}\\
%\leq&\disp{\frac{q+1}{q+2}((|f|_{L^\infty(0,\omega; L^\infty(\Omega))}+1)(|\Omega|+1)^{\frac{1}{2}})^{\frac{q+2}{q+1}}|u|^{q+2}_{q+2}+\frac{1}{q+2}}\\
%\leq&\disp{\frac{q+1}{q+2}(|f|_{L^\infty(0,\omega; L^\infty(\Omega))}+1)^2(|\Omega|+1)|u|^{q+2}_{q+2}+\frac{1}{q+2}.}\\
\end{array}
\label{3333cz2.5114114}
\end{equation}
by the H\"{o}lder inequality.
On the other hand, by the Gagliardo--Nirenberg inequality and \dref{ddfgczhhhh2.5ghju48cfg924ghyuji}, one can get there exist positive constants  $\mu_0$ and $\mu_1$ such that
\begin{equation}
\begin{array}{rl}
\disp\left(\int_\Omega c^{\frac{(p-1)(p-2\alpha)}{{2\alpha}}}_{\varepsilon}\right)^{\frac{{2\alpha}}{p-2\alpha}}=
&\disp{\|  c^{\frac{p}{2}}_{\varepsilon}\|^{\frac{2(p-1)}{p}}_{L^{\frac{(p-1)(p-2\alpha)}{p\alpha }}(\Omega)}}\\
%&\leq\disp{ \left(\int_\Omega(n+1)^{(p+1-m-2\alpha)3}\right)^{\frac{1}{3}}\left(\int_\Omega |\nabla c|^{3}\right)^{\frac{2}{3}}}\\
%&=\disp{\|  n^{\frac{p}{2}}\|^{\frac{2(p+\frac{2}{3})}{p }}_{L^{\frac{2(p+\frac{2}{3})}{p }}(\Omega)}}\\
\leq&\disp{\mu_{0}\|\nabla   c^{\frac{p}{2}}_{\varepsilon}\|_{L^2(\Omega)}^{\frac{2(p-2\alpha-1)}{p-2\alpha}}\|  c^{\frac{p}{2}}_{\varepsilon}\|_{L^{\frac{2}{p}}(\Omega)}^{\frac{{4\alpha}}{p(p-2\alpha)}}+\|  c^{\frac{p}{2}}_{\varepsilon}\|_{L^\frac{2}{p}(\Omega)}^{\frac{2(p-1)}{p}}}\\
\leq&\disp{\mu_{1}(\|\nabla   c^{\frac{p}{2}}_{\varepsilon}\|_{L^2(\Omega)}^{\frac{2(p-2\alpha-1)}{p-2\alpha}}+1).}\\
%\leq&\disp{ \int_\Omega(n+1)^{p+1-m-2\alpha} |\nabla c|^2+ap\int_{\Omega}{n}^{p-1}- \frac{bp}{2^r}\int_{\Omega}{n^{p+1}}}\\
%&+\disp{\frac{1}{{2}}\int_{\partial\Omega} |\nabla c|^{2\beta-2}\frac{\partial  |\nabla c|^{2}}{\partial\nu}+ \int_\Omega n^2 |\nabla c|^{2\beta-2}
%+ \int_\Omega |u|^2 |\nabla c|^{2\beta}}\\
%=&\disp{J_1+J_2+J_3+J_4+J_5+J_6 .}\\
%%\leq&\disp{\frac{kr(k-1)}{k+\lambda-1}\chi\int_\Omega u^{q+k}
% -\frac{b}{2}\int_\Omega u^{\lambda+k-1}  +C_3.}\\
%\leq&\disp{\int_\Omega (|f(x,t)|+L)|u|^{q+1}  }\\
%\leq&\disp{\int_\Omega (k_1|u|^{p}+k_2)|u|^{q+1}  }\\
%=&\disp{\int\int\triangle J(x-y)u(y)(u|u|^q(x))dy .}
%\leq&\disp{(|g|_{L^\infty(0,\omega; L^\infty(\Omega))}+1)(|\Omega|+1)^{\frac{1}{2}}|u|^{q+1}_{q+2}.}\\
%\leq&\disp{\frac{q+1}{q+2}((|f|_{L^\infty(0,\omega; L^\infty(\Omega))}+1)(|\Omega|+1)^{\frac{1}{2}})^{\frac{q+2}{q+1}}|u|^{q+2}_{q+2}+\frac{1}{q+2}}\\
%\leq&\disp{\frac{q+1}{q+2}(|f|_{L^\infty(0,\omega; L^\infty(\Omega))}+1)^2(|\Omega|+1)|u|^{q+2}_{q+2}+\frac{1}{q+2}.}\\
\end{array}
\label{123cz2.57151hhddfsdffffkkhhhjddffffgukildrftjj}
\end{equation}
Inserting \dref{123cz2.57151hhddfsdffffkkhhhjddffffgukildrftjj} into \dref{3333cz2.5114114} and using the Young inequality, we derive that
\begin{equation}
\begin{array}{rl}
&\disp{\frac{1}{p}\frac{d}{dt}\int_{\Omega}c^{{{p}}}_{\varepsilon}+({{p}-1})\int_{\Omega}c^{{{p}-2}}_{\varepsilon}|\nabla c_{\varepsilon}|^2+\int_{\Omega}c^{{{p}}}_{\varepsilon}}\\
\leq&\disp{\mu_{1}\|n_{\varepsilon}\|_{L^\frac{p-2\alpha}{p-4\alpha}(\Omega)}(\|\nabla   c^{\frac{p}{2}}_{\varepsilon}\|_{L^2(\Omega)}^{\frac{2(p-2\alpha-1)}{p-2\alpha}}+1)}\\
\leq&\disp{\frac{({{p}-1})}{2}\int_{\Omega}c^{{{p}-2}}_{\varepsilon}|\nabla c_{\varepsilon}|^2+C_1(p)\mu_{1}^{p-2\alpha}\|n_{\varepsilon}\|_{L^\frac{p-2\alpha}{p-4\alpha}(\Omega)}^{p-2\alpha}+\mu_{1}\|n_{\varepsilon}\|_{L^\frac{p-2\alpha}{p-4\alpha}(\Omega)}~\mbox{for all}~
t\in(0,T_{max,\varepsilon}).}\\
%\leq&\disp{\frac{1}{2}\int_\Omega  c^{p}+\frac{p-1}{p}(\frac{1}{2} p)^{-\frac{1}{p-1}}\int_\Omega  n^{p}~~\mbox{for all}~~ t\in(0,T_{max,\varepsilon}),}\\
%\leq&\disp{\int_\Omega n^{p-\alpha}|\Delta c| ,}\\
%\leq&\disp{\int_\Omega (|f(x,t)|+L)|u|^{q+1} }\\
%\leq&\disp{\int_\Omega (k_1|u|^\alpha+k_2)|u|^{q+1} }\\
%=&\disp{\int\int\triangle J(x-y)u(y)(u|u|^q(x))dydx.}
%\leq&\disp{(|g|_{L^\infty(0,\omega; L^\infty(\Omega))}+1)(|\Omega|+1)^{\frac{1}{2}}|u|^{q+1}_{q+2}.}\\
%\leq&\disp{\frac{q+1}{q+2}((|f|_{L^\infty(0,\omega; L^\infty(\Omega))}+1)(|\Omega|+1)^{\frac{1}{2}})^{\frac{q+2}{q+1}}|u|^{q+2}_{q+2}+\frac{1}{q+2}}\\
%\leq&\disp{\frac{q+1}{q+2}(|f|_{L^\infty(0,\omega; L^\infty(\Omega))}+1)^2(|\Omega|+1)|u|^{q+2}_{q+2}+\frac{1}{q+2}.}\\
\end{array}
\label{ssddd3333cz2.51fggtyuujkkklii14114}
\end{equation}
In view of  $p>\max\{1+4\alpha,2\}$, we derive the Young inequality that
\begin{equation}
\begin{array}{rl}
&\disp{\frac{1}{p}\frac{d}{dt}\int_{\Omega}c^{{{p}}}_{\varepsilon}+({{p}-1})\int_{\Omega}c^{{{p}-2}}_{\varepsilon}|\nabla c_{\varepsilon}|^2+\int_{\Omega}c^{{{p}}}_{\varepsilon}}\\
%\leq&\disp{\mu_{1}\|n_{\varepsilon}\|_{L^\frac{p-2\alpha}{p-4\alpha}(\Omega)}(\|\nabla   c^{\frac{p}{2}}_{\varepsilon}\|_{L^2(\Omega)}^{\frac{2(p-2\alpha-1)}{p-2\alpha}}+1)}\\
\leq&\disp{\frac{({{p}-1})}{2}\int_{\Omega}c^{{{p}-2}}_{\varepsilon}|\nabla c_{\varepsilon}|^2+C_2(p)\mu_{1}^{p-2\alpha}\|n_{\varepsilon}\|_{L^\frac{p-2\alpha}{p-4\alpha}(\Omega)}^{p-2\alpha}+C_2(p)~\mbox{for all}~
t\in(0,T_{max,\varepsilon}).}\\
%\leq&\disp{\frac{1}{2}\int_\Omega  c^{p}+\frac{p-1}{p}(\frac{1}{2} p)^{-\frac{1}{p-1}}\int_\Omega  n^{p}~~\mbox{for all}~~ t\in(0,T_{max,\varepsilon}),}\\
%\leq&\disp{\int_\Omega n^{p-\alpha}|\Delta c| ,}\\
%\leq&\disp{\int_\Omega (|f(x,t)|+L)|u|^{q+1} }\\
%\leq&\disp{\int_\Omega (k_1|u|^\alpha+k_2)|u|^{q+1} }\\
%=&\disp{\int\int\triangle J(x-y)u(y)(u|u|^q(x))dydx.}
%\leq&\disp{(|g|_{L^\infty(0,\omega; L^\infty(\Omega))}+1)(|\Omega|+1)^{\frac{1}{2}}|u|^{q+1}_{q+2}.}\\
%\leq&\disp{\frac{q+1}{q+2}((|f|_{L^\infty(0,\omega; L^\infty(\Omega))}+1)(|\Omega|+1)^{\frac{1}{2}})^{\frac{q+2}{q+1}}|u|^{q+2}_{q+2}+\frac{1}{q+2}}\\
%\leq&\disp{\frac{q+1}{q+2}(|f|_{L^\infty(0,\omega; L^\infty(\Omega))}+1)^2(|\Omega|+1)|u|^{q+2}_{q+2}+\frac{1}{q+2}.}\\
\end{array}
\label{3333cz2.51fggtyuujkkklii14114}
\end{equation}
%which implies that
%\begin{equation}
%\begin{array}{rl}
%\disp{\sup_{0<t<T_{max,\varepsilon}}\int_{\Omega}c^{\frac{5p}{3}}}
%\leq&\disp{C_2.}\\
%%\leq&\disp{\int_\Omega n^{p-\alpha}|\Delta c| ,}\\
%%\leq&\disp{\int_\Omega (|f(x,t)|+L)|u|^{q+1} }\\
%%\leq&\disp{\int_\Omega (k_1|u|^\alpha+k_2)|u|^{q+1} }\\
%%=&\disp{\int\int\triangle J(x-y)u(y)(u|u|^q(x))dydx.}
%%\leq&\disp{(|g|_{L^\infty(0,\omega; L^\infty(\Omega))}+1)(|\Omega|+1)^{\frac{1}{2}}|u|^{q+1}_{q+2}.}\\
%%\leq&\disp{\frac{q+1}{q+2}((|f|_{L^\infty(0,\omega; L^\infty(\Omega))}+1)(|\Omega|+1)^{\frac{1}{2}})^{\frac{q+2}{q+1}}|u|^{q+2}_{q+2}+\frac{1}{q+2}}\\
%%\leq&\disp{\frac{q+1}{q+2}(|f|_{L^\infty(0,\omega; L^\infty(\Omega))}+1)^2(|\Omega|+1)|u|^{q+2}_{q+2}+\frac{1}{q+2}.}\\
%\end{array}
%\label{1122221113333cz2.5jssssjkk1sssd14114}
%\end{equation}
To obtain the uniform bound of the above functional, we will employ it to bound the dissipation from
below. To this end, due to \dref{czfvgb2.5ghhjuyuccvviihjj}
and \dref{bnmbncz2.5ghhjuyuivvbnnihjj}, for some $C_3,C_4$ and $C_5> 0$ which are independent of $\varepsilon$,
 we use the Gagliardo-Nirenberg inequality %and the mass conservation (2.3) again
to obtain
 %we derive that
\begin{equation}
\begin{array}{rl}
&\disp\int_{t}^{t+\tau}\left[\|n_{\varepsilon}\|_{L^\frac{p-2\alpha}{p-4\alpha}(\Omega)}^{p-2\alpha}+C_2(p)\right]ds \\
=&\disp{\int_{t}^{t+\tau}\left[\|  n_{\varepsilon}^{\alpha}\|^{\frac{p-2\alpha}{2\alpha}}_{L^{\frac{p-2\alpha}{(p-4\alpha)\alpha }}(\Omega)}+C_2(p)\right]ds}\\
\leq&\disp{C_{3}\int_{t}^{t+\tau}\left(\| \nabla{ n_{\varepsilon}^{\alpha}}\|^{2}_{L^{2}(\Omega)}\|{ n_{\varepsilon}^{\alpha}}\|^{{\frac{p}{2\alpha}}}_{L^{\frac{1}{2\alpha}}(\Omega)}+
\|{ n_{\varepsilon}^{\alpha}}\|^{\frac{p-2\alpha}{2\alpha}}_{L^{\frac{1}{2\alpha}}(\Omega)}\right)ds+C_2(p)}\\
\leq&\disp{C_{4}\int_{t}^{t+\tau}\left(\| \nabla{ n_{\varepsilon}^{\alpha}}\|^{2}_{L^{2}(\Omega)}\right)ds+C_2(p)}\\
\leq&\disp{C_{5}K,}\\
\end{array}
\label{ddffbnmbnddfgcz2ddfvgbhh.htt678ddfghhhyuiihjj}
\end{equation}
where $\tau=\min\{1,\frac{1}{6}T_{max,\varepsilon}\}.$
This enables us to apply Lemma \ref{lemma630} to conclude that \dref{3.10gghhjuuloollgghhhyhh}  by the H\"{o}lder inequality. %\dref{3.10gghhjuuloollddgggghhhhghhhyhh} can be proved very similarly, therefore, we omit it.
\end{proof}

With the estimates obtained so far,  we have already prepared all tools to obtain an $ L^p(\Omega)$-estimate for $n_{\varepsilon}$, for some $p >1$, which plays a key role
in obtaining the $ L^\infty(\Omega)$-estimate for $n_{\varepsilon}$.  In \cite{Wang23421215}, Wang, Winkler and Xiang proved the existence of a
global classical solutions in the 2D case  on the basis of the free-energy
inequality
\begin{equation}
\int_{\Omega}n_\varepsilon\ln n_\varepsilon +a\int_{\Omega}|\nabla c_{\varepsilon}|^{2} ~~~\mbox{for all}~~ t\in(0,T_{max,\varepsilon})~~\mbox{and any}~~\varepsilon>0
\label{334444zjscz2.52ddfff97x9630222ssdd2114}
\end{equation}
with some suitable $a > 0$.
 The novelty of the
present reasoning, %we
%
%In the following
we want to derive a  entropy-like functionals
%
% (quasi-)energy inequality for the functional
\begin{equation}
\int_{\Omega}n_\varepsilon^p +\int_{\Omega}|\nabla c_{\varepsilon}|^{2} ~~\mbox{with some}~~p>1,
\label{334444zjscz2.52ddffddffff97x9630222ssdd2114}
\end{equation}
which is  different from \cite{Wang23421215}.

\begin{lemma}\label{lemma4556664ddd5630223}
If
  \begin{equation}\label{gddffffnjjmmx1.73ddddd1426677gg}
\alpha>0,
\begin{array}{ll}\\
 \end{array}
\end{equation}
%or
%\begin{equation}\label{99gddffffnjjmmx1.731426677gg}
%\mu~~~\mbox{is appropriately large},
%\begin{array}{ll}
% \end{array}
%\end{equation}
%Assume that    $\mu>\frac{(N-2)_+}{N}\chi C_{\frac{N}{2}+1}^{\frac{1}{\frac{N}{2}+1}}$.
% and $T\IN(0,T_{MAX,\VAREPSILON}).$
%$\tau=\min\{1,\frac{1}{2}T_{max,\varepsilon}\}$.
then %there exists a positive constant $p_0>\frac{3}{2}$ such that %for any $T\in(s,T_{max,\varepsilon})$,
 the solution of \dref{1.1fghyuisda} from Lemma \ref{lemma70} satisfies
% for all $p>1$,
%there exists a positive constant $C:=C(p,|\Omega|,\mu,\chi,\xi,\beta)$ such that %for any $t\in(0,T_{max,\varepsilon})$ and
% the solution of
%\dref{1.1} satisfies
%Then for all $k\in(1,([r]+1)N)$ one can
%find a positive constant $C$ such that
\begin{equation}
\int_{\Omega}n^{1+\alpha}_\varepsilon(x,t) \leq C ~~~\mbox{for all}~~ t\in(0,T_{max,\varepsilon})~~\mbox{and any}~~\varepsilon>0
\label{111334dddd444zjscz2.5297x9630222ssdd2114}
\end{equation}
as well as
\begin{equation}
\int_{\Omega}|\nabla c_{\varepsilon}|^{2} \leq C ~~~\mbox{for all}~~ t\in(0,T_{max,\varepsilon})~~\mbox{and any}~~\varepsilon>0
\label{334dddd444zjscz2.5297x9630222ssdd2114}
\end{equation}
and
\begin{equation}\int_{t}^{t+\tau}\int_{\Omega} n_\varepsilon  ^{2{+\alpha}}\leq C~~ \mbox{for all}~~ t\in(0,T_{max,\varepsilon}-\tau)~~\mbox{and any}~~\varepsilon>0,
\label{3.10gghhjuuloollsdffffffgghhhy}
\end{equation}
where $\tau=\min\{1,\frac{1}{6}T_{max,\varepsilon}\}.$
%where $s_0$ is the same as \dref{eqx45xx1ddfgggg2112}.
%Moreover, if $T_{max,\varepsilon}=\infty$, we derive that there exist positive constants $q_0>\frac{N}{2}$ and $C$ such that
%\begin{equation}
%\limsup_{t\rightarrow\infty}\|u(\cdot, t)\|_{L^{q_0}(\Omega)}
%\leq\frac{C}{\mu^{\frac{1}{r-1}}}.
%\label{4556661.1630ddffgffggggg72x}
%\end{equation}
%Moreover, assume $T\IN(0,T_{MAX,\VAREPSILON})$, then there exists a positive constant such that
%\begin{equation}
%\int_t^{t+\tau}\int_{\Omega}u^{\delta+1}(x,t)dxds\leq C(T)~~~\mbox{for all}~~ t\in(0,T).
%\label{zjscz2.5297ssdeeex96302222114}
%\end{equation}
%holds.
\end{lemma}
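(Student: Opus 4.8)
The plan is to derive the three bounds simultaneously from a single coupled differential inequality for the entropy-like functional
$$y(t):=\int_{\Omega}n_{\varepsilon}^{1+\alpha}+a\int_{\Omega}|\nabla c_{\varepsilon}|^{2},$$
with a sufficiently small $a>0$, and then to apply the ODE comparison Lemma \ref{lemma630}. First I would test the first equation of \dref{1.1fghyuisda} by $(1+\alpha)n_{\varepsilon}^{\alpha}$; since $\nabla\cdot u_{\varepsilon}=0$ and $u_{\varepsilon}=0$ on $\partial\Omega$, the convective term drops after integration by parts, leaving
$$\frac{d}{dt}\int_{\Omega}n_{\varepsilon}^{1+\alpha}+\frac{4\alpha}{1+\alpha}\int_{\Omega}|\nabla n_{\varepsilon}^{\frac{1+\alpha}{2}}|^{2}=\alpha(1+\alpha)\int_{\Omega}n_{\varepsilon}^{\alpha}S_{\varepsilon}(x,n_{\varepsilon},c_{\varepsilon})\nabla n_{\varepsilon}\cdot\nabla c_{\varepsilon}.$$
The decisive structural gain from \dref{x1.73142vghf48gg} is that $n_{\varepsilon}^{\alpha}|S_{\varepsilon}|\le C_{S}\,n_{\varepsilon}^{\alpha}(1+n_{\varepsilon})^{-\alpha}$; a weighted Young inequality then absorbs half of the diffusion dissipation $\int_{\Omega}n_{\varepsilon}^{\alpha-1}|\nabla n_{\varepsilon}|^{2}$ and leaves a coupling term dominated by $\int_{\Omega}n_{\varepsilon}^{1+\alpha}(1+n_{\varepsilon})^{-2\alpha}|\nabla c_{\varepsilon}|^{2}\le C\int_{\Omega}(1+n_{\varepsilon}^{1-\alpha})|\nabla c_{\varepsilon}|^{2}$, the weight staying bounded near $n_{\varepsilon}=0$. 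It is precisely the hypothesis $\alpha>0$ that makes the chemotactic cross term borderline-controllable, which is what renders the condition optimal.

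Next I would test the second equation by $-\Delta c_{\varepsilon}$. Because $\nabla c_{\varepsilon}\cdot\nu=0$ on $\partial\Omega$, the identity $\int_{\Omega}c_{\varepsilon t}(-\Delta c_{\varepsilon})=\tfrac12\tfrac{d}{dt}\int_{\Omega}|\nabla c_{\varepsilon}|^{2}$ carries \emph{no} boundary contribution, giving
$$\frac{1}{2}\frac{d}{dt}\int_{\Omega}|\nabla c_{\varepsilon}|^{2}+\int_{\Omega}|\Delta c_{\varepsilon}|^{2}+\int_{\Omega}|\nabla c_{\varepsilon}|^{2}=-\int_{\Omega}n_{\varepsilon}\Delta c_{\varepsilon}+\int_{\Omega}(u_{\varepsilon}\cdot\nabla c_{\varepsilon})\Delta c_{\varepsilon}.$$
The key point, and the reason convexity of $\Omega$ becomes dispensable, is that I retain the full Laplacian dissipation $\int_{\Omega}|\Delta c_{\varepsilon}|^{2}$ and control all lower norms of $\nabla c_{\varepsilon}$ through the elliptic regularity estimate $\|c_{\varepsilon}\|_{W^{2,2}(\Omega)}\le C(\|\Delta c_{\varepsilon}\|_{L^{2}(\Omega)}+\|c_{\varepsilon}\|_{L^{2}(\Omega)})$, which holds on \emph{any} smooth bounded domain. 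By contrast, the $\int_{\Omega}n\ln n$-based argument of \cite{Wang23421215} converts to $\int_{\Omega}|D^{2}c_{\varepsilon}|^{2}$ and thereby incurs the boundary curvature integral $\int_{\partial\Omega}\partial_{\nu}|\nabla c_{\varepsilon}|^{2}$, whose sign forces convexity; working with the lowest-order gradient functional $\int_{\Omega}|\nabla c_{\varepsilon}|^{2}$ avoids it altogether.

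Adding the two identities weighted by $a$, I would dispose of the remaining terms as follows. The production term satisfies $-\int_{\Omega}n_{\varepsilon}\Delta c_{\varepsilon}\le\delta\int_{\Omega}|\Delta c_{\varepsilon}|^{2}+C\int_{\Omega}n_{\varepsilon}^{2}$, and $\int_{\Omega}n_{\varepsilon}^{2}$ is interpolated by the two-dimensional Gagliardo--Nirenberg inequality between $\int_{\Omega}n_{\varepsilon}^{1+\alpha}$ and the recovered dissipation $\int_{\Omega}|\nabla n_{\varepsilon}^{(1+\alpha)/2}|^{2}$, using the a priori bounds \dref{czfvgb2.5ghhjuyuccvviihjj}. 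The coupling term $\int_{\Omega}(1+n_{\varepsilon}^{1-\alpha})|\nabla c_{\varepsilon}|^{2}$ is treated (in the genuinely delicate range $0<\alpha<1$) by H\"older together with $\int_{\Omega}n_{\varepsilon}^{2\alpha}\le C$ from \dref{czfvgb2.5ghhjuyuccvviihjj} and a Gagliardo--Nirenberg estimate for $\|\nabla c_{\varepsilon}\|_{L^{r}(\Omega)}$ in which $\|\nabla c_{\varepsilon}\|_{W^{1,2}(\Omega)}$ is replaced by $\|\Delta c_{\varepsilon}\|_{L^{2}(\Omega)}$ via the elliptic estimate, the top-order factors being absorbed into the dissipations. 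The fluid term is split as $\int_{\Omega}(u_{\varepsilon}\cdot\nabla c_{\varepsilon})\Delta c_{\varepsilon}\le\delta\int_{\Omega}|\Delta c_{\varepsilon}|^{2}+C\|u_{\varepsilon}\|_{L^{4}(\Omega)}^{2}\|\nabla c_{\varepsilon}\|_{L^{4}(\Omega)}^{2}$ and handled by the same planar interpolations, the surviving factor $\|\nabla u_{\varepsilon}\|_{L^{2}(\Omega)}^{2}$ being only space-time integrable by \dref{bnmbncz2.5ghhjuyuivvbnnihjj}.

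The outcome is a differential inequality $y'(t)+\kappa y(t)\le h(t)$ with a nonnegative $h\in L^{1}_{loc}$ satisfying $\int_{t}^{t+\tau}h\le B$ uniformly in $\varepsilon$; Lemma \ref{lemma630} then yields \dref{111334dddd444zjscz2.5297x9630222ssdd2114} and \dref{334dddd444zjscz2.5297x9630222ssdd2114}, and integrating the surviving dissipation $\int_{\Omega}|\nabla n_{\varepsilon}^{(1+\alpha)/2}|^{2}$ in time and interpolating once more against the uniform $L^{1+\alpha}$ bound produces the space-time estimate \dref{3.10gghhjuuloollsdffffffgghhhy}. I expect the main obstacle to be the simultaneous control of the chemotactic coupling term and the fluid convection term within the single budget $\int_{\Omega}|\nabla n_{\varepsilon}^{(1+\alpha)/2}|^{2}+\int_{\Omega}|\Delta c_{\varepsilon}|^{2}$: the interpolation exponents must be arranged so that no super-linear dependence on $y$ survives and every un-absorbable remainder lands in the space-time-$L^{1}$ function $h$, and it is exactly the borderline algebra available at $\alpha>0$ that makes this closing step possible.
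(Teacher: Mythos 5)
Your functional, your two testing procedures, and most of your term-by-term estimates coincide with the paper's proof, but the closing step contains a genuine gap: the fluid convection term cannot be reduced to the form required by Lemma \ref{lemma630}. After your own interpolations, that term leaves exactly $C\|\nabla u_{\varepsilon}\|_{L^{2}(\Omega)}^{2}\|\nabla c_{\varepsilon}\|_{L^{2}(\Omega)}^{2}=\rho(t)\,\big(\text{part of }y(t)\big)$, where $\rho(t)=C\|\nabla u_{\varepsilon}\|_{L^{2}(\Omega)}^{2}$ is, by \dref{bnmbncz2.5ghhjuyuivvbnnihjj}, only uniformly \emph{space-time integrable}, not pointwise bounded. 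This term can neither be put into a forcing function $h$ with $\int_{t}^{t+\tau}h\leq B$ (since that would presuppose the very $L^\infty$-in-time bound on $y$ you are trying to prove), nor be absorbed into the dissipation: using $\|\nabla c_{\varepsilon}\|_{L^{2}}^{4}\leq\|c_{\varepsilon}\|_{L^{2}}^{2}\|\Delta c_{\varepsilon}\|_{L^{2}}^{2}$ to absorb it would leave a remainder proportional to $\|\nabla u_{\varepsilon}\|_{L^{2}}^{4}$, whose local time integrals are \emph{not} controlled by \dref{bnmbncz2.5ghhjuyuivvbnnihjj}. So the inequality you actually reach is $y'(t)+h(t)\leq\rho(t)y(t)+C$ with $\rho\in L^{1}_{loc}$ uniformly, which is outside the scope of Lemma \ref{lemma630}. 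The paper closes this loop differently: it establishes, in addition, the uniform local space-time bound $\int_{t}^{t+\tau}y(s)\,ds\leq C$ (via the Gagliardo--Nirenberg estimate \dref{3333cz2.5kke345677ddff89001ddff214114114rrggjjkk} and \dref{bnmbncz2.5ghhjuyuivvbnnihjj}), uses it to pick, for each $t$, a starting time $t_{0}\in[t-\tau,t)$ with $y(t_{0})\leq C$, and then applies Gronwall's inequality on $[t_{0},t]$, where the factor $e^{\int_{t_{0}}^{t}\rho}$ is bounded because $\int_{t}^{t+\tau}\rho\leq C$. The bound \dref{3.10gghhjuuloollsdffffffgghhhy} then follows by integrating the differential inequality over $(t,t+\tau)$ \emph{after} the uniform bound on $y$ is known. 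This pigeonhole-plus-Gronwall device is the essential ingredient your proposal is missing.

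A secondary point: your treatment of the chemotactic coupling term by H\"older against $\int_{\Omega}n_{\varepsilon}^{2\alpha}\leq C$ requires the conjugate exponent $\frac{2\alpha}{1-\alpha}\geq1$, i.e. $\alpha\geq\frac{1}{3}$, so as stated it does not cover the full range $\alpha>0$. The paper instead splits $n_{\varepsilon}^{1-\alpha}|\nabla c_{\varepsilon}|^{2}\leq\varepsilon_{1}n_{\varepsilon}^{2+\alpha}+C|\nabla c_{\varepsilon}|^{\frac{2\alpha+4}{1+2\alpha}}$ by Young, and controls the second integral through the $L^{l_{0}}$-bound on $c_{\varepsilon}$ (with $l_{0}>\frac{2-2\alpha}{3\alpha}$) supplied by Lemma \ref{aasslemmafggg78630jklhhjj}, interpolating against $\|\Delta c_{\varepsilon}\|_{L^{2}(\Omega)}$; note that this is precisely where the preparatory Lemma \ref{aasslemmafggg78630jklhhjj}, absent from your outline, enters. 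Your structural remarks (the first-equation testing, the $-\Delta c_{\varepsilon}$ testing with no boundary term, and the explanation of why convexity of $\Omega$ is dispensable) are consistent with the paper.
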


\begin{proof}
%Let
%$p=\frac{25}{16}$.
%Firstly, let $\frac{N}{2}<p<2.$
%Since,  $\frac{r}{2}\mu>\frac{(N-2)_{+}}{N}\chi C^{\frac{1}{\frac{N}{2}+1}}_{\frac{N}{2}+1}$,  we may choose  $p:={p_0}>\frac{N}{2}$ %in \dref{cz2.5kk1214114114rrggkkll}
%such that
%\begin{equation}\frac{r}{2}\mu>\frac{{p_0}-1}{{p_0}}\chi C_{{p_0}+1}^{\frac{1}{{p_0}+1}}.
%\label{3333ddfffggcz2.5114114}
%\end{equation}
%Hence, we can choose $\varepsilon_0>0$ small enough ( e.g. $\varepsilon_0=\frac{1}{2}\frac{r\mu-2\frac{{p_0}-1}{{p_0}}\chi C_{{p_0}+1}^{\frac{1}{{p_0}+1}}}{r\mu}$) such that
%\begin{equation}\frac{r}{2}\mu>\frac{1}{1-\varepsilon_0}\frac{{p_0}-1}{{p_0}}\chi C_{{p_0}+1}^{\frac{1}{{p_0}+1}}.
%\label{3333ddfffggcz2.5114114}
%\end{equation}
%
%Let $p>1.$
Taking ${ n_{\varepsilon}^{\alpha}}$ as the test function for the first equation of
$\dref{1.1fghyuisda}$
 and combining with the second equation and using $\nabla\cdot u_\varepsilon=0$, we derive %from \dref{1.ffggvbbnxxccvvn1}
 that
%\begin{equation}
%\begin{array}{rl}
%&\disp{\frac{1}{{p}}\frac{d}{dt}\|n\|^{{{p}}}_{L^{{p}}(\Omega)}+
%(m-1)\int_{\Omega}{n^{p-2}}|{\nabla} {n}|^2 }
%\\
%=&\disp{(m-1)\int_\Omega  {n}^{2\alpha}{\nabla} {n}\cdot\nabla c +a\int_{\Omega}n^{p}- b\int_{\Omega}{n^{p+1}}}\\
%\leq&\disp{(m-1)C_S\int_\Omega n^{p-(m-1)-1}|{\nabla} {n}||\nabla c| +a\int_{\Omega}n^{p}- b\int_{\Omega}{n^{p+1}}}\\
%\leq&\disp{\frac{C_D(m-1)}{2}\int_{\Omega}n^{p-2} |\nabla n|^2+\frac{(m-1)\chi^2}{2C_D}\int_\Omega{n}^{p+1-m-m-1}|\nabla c|^2}\\
%&+ \disp{a\int_{\Omega}n^{p}- b\int_{\Omega}{n^{p+1}},}\\
%%\leq&\disp{\frac{(m-1)}{m+m-1}\int_\Omega(n+1)^{m+p}+
%%(m-1)(\|\nabla c\|^{2}_{L^2(\Omega)}+\|\nabla u\|^{2}_{L^2(\Omega)}+\frac{1}{4}\int_\Omega  n^{p-2}|{\nabla} {n}|^{2})}\\
%%&+ \disp{(m-1)\int_\Omega (n+1)^{3(2q-3m-m-1)}+(a+2b)\int_{\Omega}n^{p}+b\int_{\Omega}{n}^{2\alpha}- b\int_{\Omega}{n^{p+1}}}\\
%%\leq&\disp{(m-1)\int_\Omega  (n+1)^{p-q-1}|{\nabla} {n}||\nabla c|  .}\\
%%\leq&\disp{\int_\Omega (|f(x,t)|+L)|u|^{q+1}  }\\
%%\leq&\disp{\int_\Omega (k_1|u|^(m-1)+k_2)|u|^{q+1}  }\\
%%=&\disp{\int\int\triangle J(x-y)u(y)(u|u|^q(x))dy .}
%%\leq&\disp{(|g|_{L^\infty(0,\omega; L^\infty(\Omega))}+1)(|\Omega|+1)^{\frac{1}{2}}|u|^{q+1}_{q+2}.}\\
%%\leq&\disp{\frac{q+1}{q+2}((|f|_{L^\infty(0,\omega; L^\infty(\Omega))}+1)(|\Omega|+1)^{\frac{1}{2}})^{\frac{q+2}{q+1}}|u|^{q+2}_{q+2}+\frac{1}{q+2}}\\
%%\leq&\disp{\frac{q+1}{q+2}(|f|_{L^\infty(0,\omega; L^\infty(\Omega))}+1)^2(|\Omega|+1)|u|^{q+2}_{q+2}+\frac{1}{q+2}.}\\
%\end{array}
%\label{cz2.5ghju48}
%\end{equation}

\begin{equation}
\begin{array}{rl}
&\disp{\frac{1}{{1+\alpha}}\frac{d}{dt}\|n_\varepsilon \|^{{1+\alpha}}_{L^{{1+\alpha}}(\Omega)}+{\alpha}\int_{\Omega} n_\varepsilon  ^{{{{\alpha}-1}}}|\nabla n_\varepsilon|^2}
\\
=&\disp{-\int_\Omega  n_\varepsilon  ^{\alpha}\nabla\cdot(n_\varepsilon S_\varepsilon(x, n_{\varepsilon}, c_{\varepsilon})
\nabla c_\varepsilon) }\\
=&\disp{ \alpha \int_\Omega   n_\varepsilon  ^{{\alpha-1}} n_\varepsilon S_\varepsilon(x, n_{\varepsilon}, c_{\varepsilon})
\nabla n_\varepsilon\cdot\nabla c_\varepsilon}\\
\leq&\disp{ \alpha C_S \int_\Omega   n_\varepsilon  ^{\alpha}(1+n_\varepsilon)^{-\alpha}
|\nabla n_\varepsilon||\nabla c_\varepsilon|~~\mbox{for all}~~ t\in(0,T_{max,\varepsilon})}\\
%\leq&\disp{\int_\Omega n^{p-(m-1)}|\Delta c| ,}\\
%\leq&\disp{\int_\Omega (|f(x,t)|+L)|u|^{q+1} }\\
%\leq&\disp{\int_\Omega (k_1|u|^(m-1)+k_2)|u|^{q+1} }\\
%=&\disp{\int\int\triangle J(x-y)u(y)(u|u|^q(x))dydx.}
%\leq&\disp{(|g|_{L^\infty(0,\omega; L^\infty(\Omega))}+1)(|\Omega|+1)^{\frac{1}{2}}|u|^{q+1}_{q+2}.}\\
%\leq&\disp{\frac{q+1}{q+2}((|f|_{L^\infty(0,\omega; L^\infty(\Omega))}+1)(|\Omega|+1)^{\frac{1}{2}})^{\frac{q+2}{q+1}}|u|^{q+2}_{q+2}+\frac{1}{q+2}}\\
%\leq&\disp{\frac{q+1}{q+2}(|f|_{L^\infty(0,\omega; L^\infty(\Omega))}+1)^2(|\Omega|+1)|u|^{q+2}_{q+2}+\frac{1}{q+2}.}\\
\end{array}
\label{ttty3333cz2.5114114}
\end{equation}
%Integrating by parts to  \dref{cz2.5} and using \dref{9162} and \dref{9162ccxxzz12}, we have
%which, together with \dref{9162} implies that
%\begin{equation*}
%\begin{array}{rl}
%&\disp{\frac{1}{{1+\alpha}}\frac{d}{dt}\|u\|^{{{1+\alpha}}}_{L^{{1+\alpha}}(\Omega)}+{\alpha}\int_{\Omega}  u^{{{1+\alpha}}-2}|\nabla u|^2}
%\\
%\leq&\disp{-\int_\Omega \nabla\cdot( u\nabla c)
%  u^{{1+\alpha}-1} +
%\int_\Omega   u^{{1+\alpha}-1}(au-bu^2) ,}\\
%%\leq&\disp{\int_\Omega (|f(x,t)|+L)|u|^{q+1} }\\
%%\leq&\disp{\int_\Omega (k_1|u|^(m-1)+k_2)|u|^{q+1} }\\
%%=&\disp{\int\int\triangle J(x-y)u(y)(u|u|^q(x))dy.}
%%\leq&\disp{(|g|_{L^\infty(0,\omega; L^\infty(\Omega))}+1)(|\Omega|+1)^{\frac{1}{2}}|u|^{q+1}_{q+2}.}\\
%%\leq&\disp{\frac{q+1}{q+2}((|f|_{L^\infty(0,\omega; L^\infty(\Omega))}+1)(|\Omega|+1)^{\frac{1}{2}})^{\frac{q+2}{q+1}}|u|^{q+2}_{q+2}+\frac{1}{q+2}}\\
%%\leq&\disp{\frac{q+1}{q+2}(|f|_{L^\infty(0,\omega; L^\infty(\Omega))}+1)^2(|\Omega|+1)|u|^{q+2}_{q+2}+\frac{1}{q+2}.}\\
%\end{array}
%\end{equation*}
by using \dref{x1.73142vghf48gg}.
Therefore,
by the Young inequality, we conclude that
\begin{equation}
\begin{array}{rl}
&\disp{\frac{1}{{1+\alpha}}\frac{d}{dt}\|n_\varepsilon  \|^{{{1+\alpha}}}_{L^{{1+\alpha}}(\Omega)}+{\alpha}\int_{\Omega} n_\varepsilon  ^{{{{\alpha}-1}}}|\nabla n_\varepsilon|^2}
\\
\leq&\disp{\frac{{\alpha}}{2}\int_{\Omega} n_\varepsilon  ^{{{{\alpha}-1}}}|\nabla n_\varepsilon|^2+\frac{\alpha C_S^2}{2}\int_\Omega   n_\varepsilon^{1+\alpha}(1+n_\varepsilon)^{-2\alpha}
|\nabla c_\varepsilon|^2~~\mbox{for all}~~ t\in(0,T_{max,\varepsilon}).}\\
%\leq&\disp{\int_\Omega (|f(x,t)|+L)|u|^{q+1} }\\
%\leq&\disp{\int_\Omega (k_1|u|^(m-1)+k_2)|u|^{q+1} }\\
%=&\disp{\int\int\triangle J(x-y)u(y)(u|u|^q(x))dy.}
%\leq&\disp{(|g|_{L^\infty(0,\omega; L^\infty(\Omega))}+1)(|\Omega|+1)^{\frac{1}{2}}|u|^{q+1}_{q+2}.}\\
%\leq&\disp{\frac{q+1}{q+2}((|f|_{L^\infty(0,\omega; L^\infty(\Omega))}+1)(|\Omega|+1)^{\frac{1}{2}})^{\frac{q+2}{q+1}}|u|^{q+2}_{q+2}+\frac{1}{q+2}}\\
%\leq&\disp{\frac{q+1}{q+2}(|f|_{L^\infty(0,\omega; L^\infty(\Omega))}+1)^2(|\Omega|+1)|u|^{q+2}_{q+2}+\frac{1}{q+2}.}\\
\end{array}
\label{3333cz2.5kkssss1214114114}
\end{equation}

Now, we must estimate the last term on the right-hand side of \dref{3333cz2.5kkssss1214114114}. To this end,
without loss of generality, we may assume $\alpha< 1$, since $\alpha \geq1$, can be
proved similarly and easily. In fact, if $\alpha\geq1$, then by $n_\varepsilon\geq0$,
$$
\begin{array}{rl}
&\disp{\frac{\alpha C_S^2}{2}\int_\Omega   n_\varepsilon^{1+\alpha}(1+n_\varepsilon)^{-2\alpha}
|\nabla c_\varepsilon|^2}\\
\leq&\disp{\frac{\alpha C_S^2}{2}\int_\Omega   n_\varepsilon(1+n_\varepsilon)^{-\alpha}
|\nabla c_\varepsilon|^2}\\
\leq&\disp{\frac{\alpha C_S^2}{2}\int_\Omega
|\nabla c_\varepsilon|^2  ,}
%\leq&\disp{a\int_\Omega   u^{{{1+\alpha}-1}}-b\int_\Omega u^{q+r+{{1+\alpha}-1}} +b|\Omega|.}\\
%=&\disp{-(k-1)\chi M_{\psi}\int_\Omega u^{k}(v-u^q) }\\
%%\leq&\disp{\int_\Omega (|f(x,t)|+L)|u|^{q+1} }\\
%%\leq&\disp{\int_\Omega (k_1|u|^(m-1)+k_2)|u|^{q+1} }\\
%\leq&\disp{-(k-1)\chi M_{\psi}\int_\Omega u^{q+k}.}\\
%%\leq&\disp{(|g|_{L^\infty(0,\omega; L^\infty(\Omega))}+1)(|\Omega|+1)^{\frac{1}{2}}|u|^{q+1}_{q+2}.}\\
%%\leq&\disp{\frac{q+1}{q+2}((|f|_{L^\infty(0,\omega; L^\infty(\Omega))}+1)(|\Omega|+1)^{\frac{1}{2}})^{\frac{q+2}{q+1}}|u|^{q+2}_{q+2}+\frac{1}{q+2}}\\
%%\leq&\disp{\frac{q+1}{q+2}(|f|_{L^\infty(0,\omega; L^\infty(\Omega))}+1)^2(|\Omega|+1)|u|^{q+2}_{q+2}+\frac{1}{q+2}.}\\
\end{array}
$$
so that, inserting the above inequality into \dref{3333cz2.5kkssss1214114114} and using \dref{bnmbncz2.5ghhjuyuivvbnnihjj}, we may derive
\dref{111334dddd444zjscz2.5297x9630222ssdd2114} and \dref{3.10gghhjuuloollsdffffffgghhhy} by using Lemma \ref{lemma630}.  \dref{334dddd444zjscz2.5297x9630222ssdd2114} can be proved by employing almost exactly the same arguments as in the proof of the case $0<\alpha<1$ (see \dref{ssdd3333cz2.5kkett677ddff734567789999001214114114rrggjjkk}--\dref{czfvgb2.5ghhddffggjuygdddhjjjuffghhhddfghhccvjkkklllhhjkkviihjj}). Therefore, we omit it.
%\begin{equation}
%\int_{\Omega}n^{1+\alpha}_\varepsilon(x,t)\leq C ~~~\mbox{for all}~~ t\in(0,T_{max,\varepsilon})~~\mbox{and any}~~\varepsilon>0
%\label{334444zjscz2.5297x963022ddd2ssdd2114}
%\end{equation}
%as well as
%\begin{equation}\int_{t}^{t+\tau}\int_{\Omega} n_\varepsilon  ^{2{+\alpha}}\leq C~~ \mbox{for all}~~ t\in(0,T_{max,\varepsilon}-\tau)~~\mbox{and any}~~\varepsilon>0,
%\label{3.10gghhjuuloollsdffffffdddddgghhhy}
%\end{equation}
%where $\tau=\min\{1,\frac{1}{6}T_{max,\varepsilon}\}.$ Once, more testing

While if $0<\alpha<1,$
 for any $\varepsilon_1>0,$
we invoke the Young inequality %and
%recall $r\geq2$
to find %c1 > 0 such
that
%
%Hence, in light of $r\geq2$, by Young inequality, it reads that
\begin{equation}
\begin{array}{rl}
&\disp{\frac{\alpha C_S^2}{2}\int_\Omega   n_\varepsilon^{1+\alpha}(1+n_\varepsilon)^{-2\alpha}
|\nabla c_\varepsilon|^2}\\
\leq&\disp{\frac{\alpha C_S^2}{2}\int_\Omega   n_\varepsilon(1+n_\varepsilon)^{-\alpha}
|\nabla c_\varepsilon|^2}\\
\leq&\disp{\frac{\alpha C_S^2}{2}\int_\Omega   n_\varepsilon^{1-\alpha}
|\nabla c_\varepsilon|^2}\\
 \leq&\disp{\varepsilon_1\int_\Omega   n_\varepsilon  ^{2{+\alpha}}+C_1(\varepsilon_1)\int_\Omega  |\nabla c_\varepsilon|^{\frac{2\alpha+4}{1+2\alpha}}  ,}
%\leq&\disp{a\int_\Omega   u^{{{1+\alpha}-1}}-b\int_\Omega u^{q+r+{{1+\alpha}-1}} +b|\Omega|.}\\
%=&\disp{-(k-1)\chi M_{\psi}\int_\Omega u^{k}(v-u^q) }\\
%%\leq&\disp{\int_\Omega (|f(x,t)|+L)|u|^{q+1} }\\
%%\leq&\disp{\int_\Omega (k_1|u|^(m-1)+k_2)|u|^{q+1} }\\
%\leq&\disp{-(k-1)\chi M_{\psi}\int_\Omega u^{q+k}.}\\
%%\leq&\disp{(|g|_{L^\infty(0,\omega; L^\infty(\Omega))}+1)(|\Omega|+1)^{\frac{1}{2}}|u|^{q+1}_{q+2}.}\\
%%\leq&\disp{\frac{q+1}{q+2}((|f|_{L^\infty(0,\omega; L^\infty(\Omega))}+1)(|\Omega|+1)^{\frac{1}{2}})^{\frac{q+2}{q+1}}|u|^{q+2}_{q+2}+\frac{1}{q+2}}\\
%%\leq&\disp{\frac{q+1}{q+2}(|f|_{L^\infty(0,\omega; L^\infty(\Omega))}+1)^2(|\Omega|+1)|u|^{q+2}_{q+2}+\frac{1}{q+2}.}\\
\end{array}
\label{3333cz2.563ss011228ddff}
\end{equation}
where  %\begin{equation}
%\varepsilon_1=\frac{1}{2}(\frac{r}{2}b -\frac{{p_0}-1}{{p_0}}\frac{1}{1-\varepsilon_0}\chi C_{{p_0}+1}^{\frac{1}{{p_0}+1}})>0\label{cz2ssddddssdddddd.563011228ddff}
%\end{equation}
%and
$$C_1(\varepsilon_1)=\frac{1+2\alpha}{2{+\alpha}}\left(\varepsilon_1\frac{2+\alpha}{1-\alpha}\right)^{-\frac{1-\alpha}{1+2\alpha} }
\left(\frac{\alpha C_S^2}{2}\right)^{\frac{2{+\alpha}}{1+2\alpha} }.$$
%$$C_2(\varepsilon_2,{r})=\frac{r}{{1+\alpha}+1}\left(\varepsilon_2\frac{{1+\alpha}+1}{{1+\alpha}-1}\right)^{-\frac{{1+\alpha}-1}{r}}
%\left(a+b\right)^{\frac{{1+\alpha}+1}{r}}|\Omega|.$$
%
%
In light of \dref{3.10gghhjuuloollgghhhyhh}, there exist positive constants $l_0>\frac{2-2\alpha}{3\alpha}$ and $C_2$ such  that
\begin{equation}\|c_\varepsilon(\cdot, t)\|_{L^{l_0}(\Omega)}\leq C_2~~ \mbox{for all}~~ t\in(0,T_{max,\varepsilon}).
\label{3.10gghhjukklllkklllokkllffghhjjoppuloollgghhhyhh}
\end{equation}
%and
%\begin{equation}\|w_\varepsilon(\cdot, t)\|_{L^{l_0}(\Omega)}\leq C_3~~ \mbox{for all}~~ t\in(0,T_{max,\varepsilon}).
%\label{3.10ggfffffhhjukklllkklllokkllffghhjjoppuloollgghhhyhh}
%\end{equation}

Next, with the help of the Gagliardo--Nirenberg inequality and \dref{3.10gghhjukklllkklllokkllffghhjjoppuloollgghhhyhh}, we derive that
\begin{equation}\label{ssdd3333cz2.5kkett677734567789999001214114114rrggjjkk}
\begin{array}{rl}
&\disp{C_1(\varepsilon_1)\int_\Omega  |\nabla c_\varepsilon|^{\frac{2\alpha+4}{1+2\alpha}}}
\\
\leq&\disp{C_3\|\Delta c_\varepsilon\|_{L^{2}(\Omega)}^{a\frac{2\alpha+4}{1+2\alpha}}\| c_\varepsilon\|_{L^{l_0}(\Omega)}^{(1-a)\frac{2\alpha+4}{1+2\alpha}}+C_3\| c_\varepsilon\|_{L^{l_0}(\Omega)}^{\frac{2\alpha+4}{1+2\alpha}}}\\
\leq&\disp{C_{4}\|\Delta c_\varepsilon\|_{L^{2}(\Omega)}^{a\frac{2\alpha+4}{1+2\alpha}}+C_{4}}\\
%\leq&\disp{{A}_1e^{-({p}+1)t+[2-r]^+t}\int_{s_0}^t
%e^{({p}+1)s}\int_\Omega |\Delta c_\varepsilon|^{ {p}+1} ds}\\
%\leq&\disp{2^{{1+\alpha}+1}{A}_1e^{-({p}+1)t}C_ {{1+\alpha}+1 }(\int_{s_0}^t
%\int_\Omega e^{({p}+1)s}(|u_\varepsilon\cdot\nabla c_\varepsilon|^ {{1+\alpha}+1 }+n^ {{1+\alpha}+1 }_\varepsilon ds+e^{({p}+1)s_0}\|v(s_0,t)\|^ {{1+\alpha}+1 }_{W^{2,  {{1+\alpha}+1 }}})}\\
%\leq&\disp{2^{{1+\alpha}+1}{A}_1e^{-({p}+1)t}C_ {{1+\alpha}+1 }(\int_{s_0}^t
%\int_\Omega e^{({p}+1)s}(\|u_\varepsilon\|_{L^{\frac{N+4}{4}(p+1)}}^{p+1}\|\nabla c_\varepsilon\|_{L^{\theta'(p+1)}}^{p+1}+n^ {{1+\alpha}+1 }_\varepsilon ds+e^{({p}+1)s_0}\|v(s_0,t)\|^ {{1+\alpha}+1 }_{W^{2,  {{1+\alpha}+1 }}})}\\
%=&\disp{{A}_1C_ {{1+\alpha}+1 }(\int_{s_0}^t
%\int_\Omega e^{-({p}+1)(t-s)} n^{{1+\alpha}+1} ds}\\
%&+\disp{e^{({p}+1)s_0}\|v(s_0,t)\|^ {{1+\alpha}+1 }_{W^{2,  {{1+\alpha}+1 }}})}\\
%\leq&\disp{\int_\Omega (|f(x,t)|+L)|u|^{q+1} }\\
%\leq&\disp{\int_\Omega (k_1|u|^(m-1)+k_2)|u|^{q+1} }\\
%=&\disp{\int\int\triangle J(x-y)n(y)(u|u|^q(x))dy.}
%\leq&\disp{(|g|_{L^\infty(0,\omega; L^\infty(\Omega))}+1)(|\Omega|+1)^{\frac{1}{2}}|u|^{q+1}_{q+2}.}\\
%\leq&\disp{\frac{q+1}{q+2}((|f|_{L^\infty(0,\omega; L^\infty(\Omega))}+1)(|\Omega|+1)^{\frac{1}{2}})^{\frac{q+2}{q+1}}|u|^{q+2}_{q+2}+\frac{1}{q+2}}\\
%\leq&\disp{\frac{q+1}{q+2}(|f|_{L^\infty(0,\omega; L^\infty(\Omega))}+1)^2(|\Omega|+1)|u|^{q+2}_{q+2}+\frac{1}{q+2}.}\\
\end{array}
\end{equation}
with some positive constants $C_3$ and $C_{4}$, where
$$a=\frac{\frac{1}{2}+\frac{1}{l_0}-\frac{1+2\alpha}{2\alpha+4}}{\frac{1}{2}+\frac{1}{l_0}}\in(0,1).$$
We derive from the Young inequality that
% for the above  $\varepsilon_1>0$, there exist positive constants $C_{13}=(1-a)\left(\varepsilon_1\times\frac{1}{a}\right)^{-\frac{a}{1-a}}C_{10}^{\frac{1}{1-a}}$ and
%$C_{14}=(1-a)\left(\varepsilon_1\times\frac{1}{a}\right)^{-\frac{a}{1-a}}C_{12}^{\frac{1}{1-a}}$ such that
%constant $C_{10}=(1-a)\left(\frac{1}{2}\frac{1}{a}\right)^{-\frac{a}{1-a}}C_9^{\frac{1}{1-a}}$ such that
\begin{equation}\label{ssdd3333cz2.5kkett677ddff734567789999001214114114rrggjjkk}
\begin{array}{rl}
&\disp{C_1(\varepsilon_1)\int_\Omega  |\nabla c_\varepsilon|^{\frac{2\alpha+4}{1+2\alpha}}\leq\frac{1}{4}\|\Delta c_\varepsilon\|_{L^{2}(\Omega)}^{2}+C_{5}}\\
%\leq&\disp{{A}_1e^{-({p}+1)t+[2-r]^+t}\int_{s_0}^t
%e^{({p}+1)s}\int_\Omega |\Delta c_\varepsilon|^{ {p}+1} ds}\\
%\leq&\disp{2^{{1+\alpha}+1}{A}_1e^{-({p}+1)t}C_ {{1+\alpha}+1 }(\int_{s_0}^t
%\int_\Omega e^{({p}+1)s}(|u_\varepsilon\cdot\nabla c_\varepsilon|^ {{1+\alpha}+1 }+n^ {{1+\alpha}+1 }_\varepsilon ds+e^{({p}+1)s_0}\|v(s_0,t)\|^ {{1+\alpha}+1 }_{W^{2,  {{1+\alpha}+1 }}})}\\
%\leq&\disp{2^{{1+\alpha}+1}{A}_1e^{-({p}+1)t}C_ {{1+\alpha}+1 }(\int_{s_0}^t
%\int_\Omega e^{({p}+1)s}(\|u_\varepsilon\|_{L^{\frac{N+4}{4}(p+1)}}^{p+1}\|\nabla c_\varepsilon\|_{L^{\theta'(p+1)}}^{p+1}+n^ {{1+\alpha}+1 }_\varepsilon ds+e^{({p}+1)s_0}\|v(s_0,t)\|^ {{1+\alpha}+1 }_{W^{2,  {{1+\alpha}+1 }}})}\\
%=&\disp{{A}_1C_ {{1+\alpha}+1 }(\int_{s_0}^t
%\int_\Omega e^{-({p}+1)(t-s)} n^{{1+\alpha}+1} ds}\\
%&+\disp{e^{({p}+1)s_0}\|v(s_0,t)\|^ {{1+\alpha}+1 }_{W^{2,  {{1+\alpha}+1 }}})}\\
%\leq&\disp{\int_\Omega (|f(x,t)|+L)|u|^{q+1} }\\
%\leq&\disp{\int_\Omega (k_1|u|^(m-1)+k_2)|u|^{q+1} }\\
%=&\disp{\int\int\triangle J(x-y)n(y)(u|u|^q(x))dy.}
%\leq&\disp{(|g|_{L^\infty(0,\omega; L^\infty(\Omega))}+1)(|\Omega|+1)^{\frac{1}{2}}|u|^{q+1}_{q+2}.}\\
%\leq&\disp{\frac{q+1}{q+2}((|f|_{L^\infty(0,\omega; L^\infty(\Omega))}+1)(|\Omega|+1)^{\frac{1}{2}})^{\frac{q+2}{q+1}}|u|^{q+2}_{q+2}+\frac{1}{q+2}}\\
%\leq&\disp{\frac{q+1}{q+2}(|f|_{L^\infty(0,\omega; L^\infty(\Omega))}+1)^2(|\Omega|+1)|u|^{q+2}_{q+2}+\frac{1}{q+2}.}\\
\end{array}
\end{equation}
by using the fact that $a\frac{2\alpha+4}{1+2\alpha}<2$ due to $l_0>\frac{2-2\alpha}{3\alpha}$.
To estimate   $\Delta c_\varepsilon$,
taking $-\Delta{c_{\varepsilon}}$
as the test function for the second  equation of \dref{1.1fghyuisda},  using the Young   inequality yields  that for all $t\in(0,T_{max,\varepsilon})$
\begin{equation}
\begin{array}{rl}
\disp\frac{1}{{2}}\disp\frac{d}{dt}\|\nabla{c_{\varepsilon}}\|^{{{2}}}_{L^{{2}}(\Omega)}+
\int_{\Omega} |\Delta c_{\varepsilon}|^2+ \int_{\Omega} | \nabla c_{\varepsilon}|^2=&\disp{-\int_{\Omega} n_{\varepsilon}\Delta c_{\varepsilon}+\int_{\Omega} (u_{\varepsilon}\cdot\nabla c_{\varepsilon})\Delta c_{\varepsilon}}\\
=&\disp{-\int_{\Omega} n_{\varepsilon}\Delta c_{\varepsilon}-\int_{\Omega}\nabla c_{\varepsilon}\nabla (u_{\varepsilon}\cdot\nabla c_{\varepsilon})}\\
=&\disp{-\int_{\Omega} n_{\varepsilon}\Delta c_{\varepsilon}-\int_{\Omega}\nabla c_{\varepsilon}\nabla (\nabla u_{\varepsilon}\cdot\nabla c_{\varepsilon}),}\\
%\leq&\disp{\|n_{\varepsilon}\|_{L^{\frac{2N}{N+2}}(\Omega)}\|c_{\varepsilon}\|_{L^{\frac{2N}{N-2}}(\Omega)}.}\\
\end{array}
\label{hhxxcsssdfvvjjczddfdddfff2.5}
\end{equation}
%and
%\begin{equation}
%\begin{array}{rl}
%\disp\frac{1}{{2}}\disp\frac{d}{dt}\|\nabla{w_{\varepsilon}}\|^{{{2}}}_{L^{{2}}(\Omega)}+
%\int_{\Omega} |\Delta w_{\varepsilon}|^2+ \int_{\Omega} |\nabla w_{\varepsilon}|^2
%=&\disp{-\int_{\Omega} n_{\varepsilon}\Delta w_{\varepsilon}+\int_{\Omega} (u_{\varepsilon}\cdot\nabla w_{\varepsilon})\Delta w_{\varepsilon}}\\
%=&\disp{-\int_{\Omega} n_{\varepsilon}\Delta w_{\varepsilon}-\int_{\Omega}\nabla w_{\varepsilon}\nabla (u_{\varepsilon}\cdot\nabla w_{\varepsilon})}\\
%=&\disp{-\int_{\Omega} n_{\varepsilon}\Delta w_{\varepsilon}-
%\int_{\Omega}\nabla w_{\varepsilon}\nabla (\nabla u_{\varepsilon}\cdot\nabla w_{\varepsilon}),}\\
%%\leq&\disp{\|n_{\varepsilon}\|_{L^{\frac{2N}{N+2}}(\Omega)}\|c_{\varepsilon}\|_{L^{\frac{2N}{N-2}}(\Omega)}.}\\
%\end{array}
%\label{hhxxcdfvvssjjczssdd2.5}
%\end{equation}
because
$$
\begin{array}{rl}
&\disp{\int_{\Omega}\nabla c_{\varepsilon}\cdot(D^2 c_{\varepsilon}\cdot u_{\varepsilon})=\frac{1}{2}\int_{\Omega}  u_{\varepsilon}\cdot\nabla|\nabla c_{\varepsilon}|^2=0
~~\mbox{for all}~~ t\in(0,T_{max,\varepsilon})}\\
%\leq&\disp{\|n_{\varepsilon}\|_{L^{\frac{2N}{N+2}}(\Omega)}\|c_{\varepsilon}\|_{L^{\frac{2N}{N-2}}(\Omega)}.}\\
\end{array}
$$
due to the fact that $\nabla\cdot u_\varepsilon=0$.
Here since combining the Gagliardo-Nirenberg inequality with well known
elliptic regularity theory (\cite{Gilbarg4441215}) we can pick $C_{6}> 0$ such that
$$
\begin{array}{rl}
\disp \|\nabla c_{\varepsilon}\|_{L^{4}(\Omega)}^2\leq&\disp{C_{6}\|\Delta c_{\varepsilon}\|_{L^{2}(\Omega)}\|\nabla c_{\varepsilon}\|_{L^{2}(\Omega)}~~\mbox{for all}~~ t\in(0,T_{max,\varepsilon}),}\\
\end{array}
$$
%and
%\begin{equation}
%\begin{array}{rl}
%\disp \|\nabla w_{\varepsilon}\|_{L^{4}(\Omega)}^2\leq&\disp{C_{11}\|\Delta w_{\varepsilon}\|_{L^{2}(\Omega)}\|\nabla w_{\varepsilon}\|_{L^{2}(\Omega)}~~\mbox{for all}~~ t\in(0,T_{max,\varepsilon}),}\\
%\end{array}
%\label{hssdddhxxddcdfvvdfgddfffhhhjjsddffddfcz2.5}
%\end{equation}
which together with  the Cauchy-Schwarz inequality and the  Young  inequality implies that
\begin{equation}
\begin{array}{rl}
\disp-\int_{\Omega}\nabla c_{\varepsilon}\nabla (\nabla u_{\varepsilon}\cdot\nabla c_{\varepsilon})\leq&\disp{\|\nabla u_{\varepsilon}\|_{L^{2}(\Omega)}\|\nabla c_{\varepsilon}\|_{L^{4}(\Omega)}^2}\\
\leq&\disp{C_{6}\|\nabla u_{\varepsilon}\|_{L^{2}(\Omega)}\|\Delta c_{\varepsilon}\|_{L^{2}(\Omega)}\|\nabla c_{\varepsilon}\|_{L^{2}(\Omega)}}\\
\leq&\disp{C_{6}^2\|\nabla u_{\varepsilon}\|_{L^{2}(\Omega)}^2\|\nabla c_{\varepsilon}\|_{L^{2}(\Omega)}^2+\frac{1}{4}\|\Delta c_{\varepsilon}\|_{L^{2}(\Omega)}^2~~\mbox{for all}~~ t\in(0,T_{max,\varepsilon}).}\\
%\leq&\disp{\|n_{\varepsilon}\|_{L^{\frac{2N}{N+2}}(\Omega)}\|c_{\varepsilon}\|_{L^{\frac{2N}{N-2}}(\Omega)}.}\\
\end{array}
\label{hhxxcsssdfvvjjcddffzddfdddfff2.5}
\end{equation}
%and
%\begin{equation}
%\begin{array}{rl}
%\disp-\int_{\Omega}\nabla w_{\varepsilon}\nabla (\nabla u_{\varepsilon}\cdot\nabla w_{\varepsilon})\leq&\disp{\|\nabla u_{\varepsilon}\|_{L^{2}(\Omega)}\|\nabla w_{\varepsilon}\|_{L^{4}(\Omega)}^2}\\
%\leq&\disp{C_{11}\|\nabla u_{\varepsilon}\|_{L^{2}(\Omega)}\|\Delta w_{\varepsilon}\|_{L^{2}(\Omega)}\|\nabla w_{\varepsilon}\|_{L^{2}(\Omega)}}\\
%\leq&\disp{C_{11}^2\|\nabla u_{\varepsilon}\|_{L^{2}(\Omega)}^2
% \frac{1}{4}\|\Delta w_{\varepsilon}\|_{L^{2}(\Omega)}^2~~\mbox{for all}~~ t\in(0,T_{max,\varepsilon}).}\\
%%\leq&\disp{\|n_{\varepsilon}\|_{L^{\frac{2N}{N+2}}(\Omega)}\|c_{\varepsilon}\|_{L^{\frac{2N}{N-2}}(\Omega)}.}\\
%\end{array}
%\label{hhxxcdfvvssjjczssdd2ffhh.5}
%\end{equation}
As the Cauchy-Schwarz inequality furthermore warrants that
\begin{equation}
\begin{array}{rl}
\disp-\int_{\Omega} n_{\varepsilon}\Delta c_{\varepsilon}\leq&\disp{\frac{1}{4}
\int_{\Omega}|\Delta c_{\varepsilon}|^2+\int_{\Omega}n_{\varepsilon}^2~~\mbox{for all}~~ t\in(0,T_{max,\varepsilon}),}\\
%\leq&\disp{\|n_{\varepsilon}\|_{L^{\frac{2N}{N+2}}(\Omega)}\|c_{\varepsilon}\|_{L^{\frac{2N}{N-2}}(\Omega)}.}\\
\end{array}
\label{hhxxcsssdfvvjjddddczddfdddfff2.5}
\end{equation}
%and
%\begin{equation}
%\begin{array}{rl}
%\disp-\int_{\Omega} n_{\varepsilon}\Delta w_{\varepsilon}\leq&\disp{\frac{1}{4}
%\int_{\Omega}|\Delta w_{\varepsilon}|^2+\int_{\Omega}n_{\varepsilon}^2~~\mbox{for all}~~ t\in(0,T_{max,\varepsilon})}\\
%%\leq&\disp{\|n_{\varepsilon}\|_{L^{\frac{2N}{N+2}}(\Omega)}\|c_{\varepsilon}\|_{L^{\frac{2N}{N-2}}(\Omega)}.}\\
%\end{array}
%\label{hhxxcsssdfvvjjddgggddczddfdddfff2.5}
%\end{equation}
from \dref{hhxxcsssdfvvjjczddfdddfff2.5} and \dref{hhxxcsssdfvvjjcddffzddfdddfff2.5} we thus infer that
\begin{equation}
\begin{array}{rl}
\disp\disp\frac{d}{dt}\|\nabla{c_{\varepsilon}}\|^{{{2}}}_{L^{{2}}(\Omega)}+
\int_{\Omega} |\Delta c_{\varepsilon}|^2+ 2\int_{\Omega} | \nabla c_{\varepsilon}|^2\leq&\disp{2\int_{\Omega}n_{\varepsilon}^2+
2C_{6}^2\|\nabla u_{\varepsilon}\|_{L^{2}(\Omega)}^2\|\nabla c_{\varepsilon}\|_{L^{2}(\Omega)}^2.}\\
%\leq&\disp{\|n_{\varepsilon}\|_{L^{\frac{2N}{N+2}}(\Omega)}\|c_{\varepsilon}\|_{L^{\frac{2N}{N-2}}(\Omega)}.}\\
\end{array}
\label{hhxxcsssdfvvsssjjczddfdddfff2.5}
\end{equation}
%and
%\begin{equation}
%\begin{array}{rl}
%\disp\disp\frac{d}{dt}\|\nabla{w_{\varepsilon}}\|^{{{2}}}_{L^{{2}}(\Omega)}+
%\int_{\Omega} |\Delta w_{\varepsilon}|^2+ 2\int_{\Omega} |\nabla w_{\varepsilon}|^2
%\leq&\disp{2\int_{\Omega}n_{\varepsilon}^2+
%2C_{11}^2\|\nabla u_{\varepsilon}\|_{L^{2}(\Omega)}^2\|\nabla w_{\varepsilon}\|_{L^{2}(\Omega)}^2.}\\
%%\leq&\disp{\|n_{\varepsilon}\|_{L^{\frac{2N}{N+2}}(\Omega)}\|c_{\varepsilon}\|_{L^{\frac{2N}{N-2}}(\Omega)}.}\\
%\end{array}
%\label{hhxxcdfvvssjjczssssdd2.5}
%\end{equation}
Collecting \dref{3333cz2.5kkssss1214114114}, \dref{ssdd3333cz2.5kkett677ddff734567789999001214114114rrggjjkk}--\dref{hhxxcsssdfvvsssjjczddfdddfff2.5}, we derive that for all $t\in(0,T_{max,\varepsilon})$,
\begin{equation}
\begin{array}{rl}
&\disp{\frac{d}{dt}(\|n_\varepsilon  \|^{{{1+\alpha}}}_{L^{{1+\alpha}}(\Omega)}+
\disp\|\nabla{c_{\varepsilon}}\|^{{{2}}}_{L^{{2}}(\Omega)})+
{\alpha}(1+\alpha)\int_{\Omega} n_\varepsilon  ^{{{{\alpha}-1}}}|\nabla n_\varepsilon|^2}
\\
&+\disp{\frac{1}{{2}}
\int_{\Omega} |\Delta c_{\varepsilon}|^2+2 \int_{\Omega} | \nabla c_{\varepsilon}|^2}\\
\leq&\disp{\varepsilon_1(1+\alpha)\int_\Omega   n_\varepsilon  ^{2{+\alpha}}+2\int_{\Omega}n_{\varepsilon}^2+
2C_{6}^2\|\nabla u_{\varepsilon}\|_{L^{2}(\Omega)}^2\|\nabla c_{\varepsilon}\|_{L^{2}(\Omega)}^2+C_{7},}\\
%\leq&\disp{\int_\Omega (|f(x,t)|+L)|u|^{q+1} }\\
%\leq&\disp{\int_\Omega (k_1|u|^(m-1)+k_2)|u|^{q+1} }\\
%=&\disp{\int\int\triangle J(x-y)u(y)(u|u|^q(x))dy.}
%\leq&\disp{(|g|_{L^\infty(0,\omega; L^\infty(\Omega))}+1)(|\Omega|+1)^{\frac{1}{2}}|u|^{q+1}_{q+2}.}\\
%\leq&\disp{\frac{q+1}{q+2}((|f|_{L^\infty(0,\omega; L^\infty(\Omega))}+1)(|\Omega|+1)^{\frac{1}{2}})^{\frac{q+2}{q+1}}|u|^{q+2}_{q+2}+\frac{1}{q+2}}\\
%\leq&\disp{\frac{q+1}{q+2}(|f|_{L^\infty(0,\omega; L^\infty(\Omega))}+1)^2(|\Omega|+1)|u|^{q+2}_{q+2}+\frac{1}{q+2}.}\\
\end{array}
\label{3333cz2.5kkssss121hhjjj4114114}
\end{equation}
which combined with the Young inequality yields to
\begin{equation}
\begin{array}{rl}
&\disp{\frac{d}{dt}(\|n_\varepsilon  \|^{{{1+\alpha}}}_{L^{{1+\alpha}}(\Omega)}+
\disp\|\nabla{c_{\varepsilon}}\|^{{{2}}}_{L^{{2}}(\Omega)})+
{\alpha}(1+\alpha)\int_{\Omega} n_\varepsilon  ^{{{{\alpha}-1}}}|\nabla n_\varepsilon|^2}
\\
&+\disp{\frac{1}{{2}}
\int_{\Omega} |\Delta c_{\varepsilon}|^2+2 \int_{\Omega} | \nabla c_{\varepsilon}|^2}\\
\leq&\disp{2\varepsilon_1(1+\alpha)\int_\Omega   n_\varepsilon  ^{2{+\alpha}}+
2C_{6}^2\|\nabla u_{\varepsilon}\|_{L^{2}(\Omega)}^2\|\nabla c_{\varepsilon}\|_{L^{2}(\Omega)}^2+C_{8}~~\mbox{for all}~~ t\in(0,T_{max,\varepsilon})}\\
%\leq&\disp{\int_\Omega (|f(x,t)|+L)|u|^{q+1} }\\
%\leq&\disp{\int_\Omega (k_1|u|^(m-1)+k_2)|u|^{q+1} }\\
%=&\disp{\int\int\triangle J(x-y)u(y)(u|u|^q(x))dy.}
%\leq&\disp{(|g|_{L^\infty(0,\omega; L^\infty(\Omega))}+1)(|\Omega|+1)^{\frac{1}{2}}|u|^{q+1}_{q+2}.}\\
%\leq&\disp{\frac{q+1}{q+2}((|f|_{L^\infty(0,\omega; L^\infty(\Omega))}+1)(|\Omega|+1)^{\frac{1}{2}})^{\frac{q+2}{q+1}}|u|^{q+2}_{q+2}+\frac{1}{q+2}}\\
%\leq&\disp{\frac{q+1}{q+2}(|f|_{L^\infty(0,\omega; L^\infty(\Omega))}+1)^2(|\Omega|+1)|u|^{q+2}_{q+2}+\frac{1}{q+2}.}\\
\end{array}
\label{3333cz2.5kksssssss121hhjjj4114114}
\end{equation}
by $\alpha>0$.

Now,  it follows from the Gagliardo--Nirenberg inequality, Lemma  \ref{2344ddfgczhhhh2.5ghju48cfg924ghyuji} that there exist  constants $\gamma_{0}> 0$ and $\gamma_{1} > 0$ such that
\begin{equation}\label{3333cz2.5kke345677ddff89001214114114rrggjjkk}
\begin{array}{rl}\disp\int_\Omega   n_\varepsilon  ^{2{+\alpha}}=& \| n_\varepsilon  ^{\frac{1+\alpha}{2}}\|_{L^{\frac{2\alpha+4}{1+\alpha}}(\Omega)}^{\frac{2\alpha+4}{1+\alpha}}\\
\leq& \gamma_{0}(\| \nabla n_\varepsilon  ^{\frac{1+\alpha}{2}}\|_{L^{2}(\Omega)}^{\frac{1+\alpha}{2+\alpha}} \| n_\varepsilon  ^{\frac{1+\alpha}{2}}\|_{L^{\frac{2}{1+\alpha}}(\Omega)}^{\frac{1}{2+\alpha}}+\| n_\varepsilon  ^{\frac{1+\alpha}{2}}\|_{L^{\frac{2}{1+\alpha}}(\Omega)})^{\frac{2\alpha+4}{1+\alpha}}\\
\leq& \gamma_{1}\| \nabla n_\varepsilon  ^{\frac{1+\alpha}{2}}\|_{L^{2}(\Omega)}^{2}+\gamma_{1}.\\
\end{array}
\end{equation}
We then achieve, with the help of \dref{3333cz2.5kke345677ddff89001214114114rrggjjkk}, that
which implies that
\begin{equation}\label{3333cz2.5kke345ddfff677ddff89001214114114rrggjjkk}
\begin{array}{rl}(1+\alpha){\alpha}\disp\int_{\Omega} n_\varepsilon  ^{{{{\alpha}-1}}}|\nabla n_\varepsilon|^2=&\disp\frac{4{\alpha}}{1+\alpha} \|\nabla n_\varepsilon  ^{\frac{1+\alpha}{2}}\|_{L^{2}(\Omega)}^{2}\\
\geq& \frac{1}{\gamma_{1}}\frac{4{\alpha}}{1+\alpha}(\disp\int_\Omega   n_\varepsilon  ^{2{+\alpha}}-1).\\
\end{array}
\end{equation}
By substituting \dref{3333cz2.5kke345ddfff677ddff89001214114114rrggjjkk} into \dref{3333cz2.5kksssssss121hhjjj4114114}, we find that
%Inserting \dref{3333cz2.5kke345ddfff677ddff89001214114114rrggjjkk} into \dref{3333cz2.5kksssssss121hhjjj4114114}, we conclude that
\begin{equation}
\begin{array}{rl}
&\disp{\frac{d}{dt}(\|n_\varepsilon  \|^{{{1+\alpha}}}_{L^{{1+\alpha}}(\Omega)}+
\|\nabla{c_{\varepsilon}}\|^{{{2}}}_{L^{{2}}(\Omega)})+
(\frac{1}{\gamma_{1}}\frac{4{\alpha}}{1+\alpha}-2(1+\alpha)\varepsilon_1)\int_\Omega   n_\varepsilon  ^{2{+\alpha}}}
\\
&+\disp{\frac{1}{{2}}
\int_{\Omega} |\Delta c_{\varepsilon}|^2+ 2\int_{\Omega} | \nabla c_{\varepsilon}|^2}\\
\leq&\disp{
2C_{6}^2\|\nabla u_{\varepsilon}\|_{L^{2}(\Omega)}^2\|\nabla c_{\varepsilon}\|_{L^{2}(\Omega)}^2+C_{9}~~\mbox{for all}~~ t\in(0,T_{max,\varepsilon}).}\\
%\leq&\disp{\int_\Omega (|f(x,t)|+L)|u|^{q+1} }\\
%\leq&\disp{\int_\Omega (k_1|u|^(m-1)+k_2)|u|^{q+1} }\\
%=&\disp{\int\int\triangle J(x-y)u(y)(u|u|^q(x))dy.}
%\leq&\disp{(|g|_{L^\infty(0,\omega; L^\infty(\Omega))}+1)(|\Omega|+1)^{\frac{1}{2}}|u|^{q+1}_{q+2}.}\\
%\leq&\disp{\frac{q+1}{q+2}((|f|_{L^\infty(0,\omega; L^\infty(\Omega))}+1)(|\Omega|+1)^{\frac{1}{2}})^{\frac{q+2}{q+1}}|u|^{q+2}_{q+2}+\frac{1}{q+2}}\\
%\leq&\disp{\frac{q+1}{q+2}(|f|_{L^\infty(0,\omega; L^\infty(\Omega))}+1)^2(|\Omega|+1)|u|^{q+2}_{q+2}+\frac{1}{q+2}.}\\
\end{array}
\label{3333cz2.5kksssssss121hhjjj4sddfff114114}
\end{equation}
Choosing $\varepsilon_1=\frac{1}{\gamma_{1}}\frac{{\alpha}}{(1+\alpha)^2}$ in \dref{3333cz2.5kksssssss121hhjjj4sddfff114114}, we conclude that
\begin{equation}
\begin{array}{rl}
&\disp{\frac{d}{dt}(\|n_\varepsilon  \|^{{{1+\alpha}}}_{L^{{1+\alpha}}(\Omega)}+
\|\nabla{c_{\varepsilon}}\|^{{{2}}}_{L^{{2}}(\Omega)})+
\frac{1}{\gamma_{1}}\frac{2{\alpha}}{1+\alpha}\int_\Omega   n_\varepsilon  ^{2{+\alpha}}+ 2\int_{\Omega} | \nabla c_{\varepsilon}|^2}
\\
\leq&\disp{
2C_{6}^2\|\nabla u_{\varepsilon}\|_{L^{2}(\Omega)}^2\|\nabla c_{\varepsilon}\|_{L^{2}(\Omega)}^2+C_{9}~~\mbox{for all}~~ t\in(0,T_{max,\varepsilon}),}\\
%\leq&\disp{\int_\Omega (|f(x,t)|+L)|u|^{q+1} }\\
%\leq&\disp{\int_\Omega (k_1|u|^(m-1)+k_2)|u|^{q+1} }\\
%=&\disp{\int\int\triangle J(x-y)u(y)(u|u|^q(x))dy.}
%\leq&\disp{(|g|_{L^\infty(0,\omega; L^\infty(\Omega))}+1)(|\Omega|+1)^{\frac{1}{2}}|u|^{q+1}_{q+2}.}\\
%\leq&\disp{\frac{q+1}{q+2}((|f|_{L^\infty(0,\omega; L^\infty(\Omega))}+1)(|\Omega|+1)^{\frac{1}{2}})^{\frac{q+2}{q+1}}|u|^{q+2}_{q+2}+\frac{1}{q+2}}\\
%\leq&\disp{\frac{q+1}{q+2}(|f|_{L^\infty(0,\omega; L^\infty(\Omega))}+1)^2(|\Omega|+1)|u|^{q+2}_{q+2}+\frac{1}{q+2}.}\\
\end{array}
\label{333ssss3cz2.5kksssssss121hhjjj4sddfff114114}
\end{equation}
which combined  the Young inequality implies that
\begin{equation}
\begin{array}{rl}
&\disp{\frac{d}{dt}(\|n_\varepsilon  \|^{{{1+\alpha}}}_{L^{{1+\alpha}}(\Omega)}+
\|\nabla{c_{\varepsilon}}\|^{{{2}}}_{L^{{2}}(\Omega)})+
2\int_\Omega   n_\varepsilon  ^{{{1+\alpha}}}+ 2\int_{\Omega} | \nabla c_{\varepsilon}|^2+\frac{1}{\gamma_{1}}\frac{{\alpha}}{1+\alpha}\int_\Omega   n_\varepsilon  ^{2{+\alpha}}}\\
\leq&\disp{
2C_{6}^2\|\nabla u_{\varepsilon}\|_{L^{2}(\Omega)}^2\|\nabla c_{\varepsilon}\|_{L^{2}(\Omega)}^2+C_{10}}\\
 \leq&\disp{
2C_{6}^2\|\nabla u_{\varepsilon}\|_{L^{2}(\Omega)}^2(\|\nabla c_{\varepsilon}\|_{L^{2}(\Omega)}^2+
 \|n_\varepsilon  \|^{{{1+\alpha}}}_{L^{{1+\alpha}}(\Omega)})+C_{10}~~\mbox{for all}~~ t\in(0,T_{max,\varepsilon}),}\\
%\leq&\disp{\int_\Omega (|f(x,t)|+L)|u|^{q+1} }\\
%\leq&\disp{\int_\Omega (k_1|u|^(m-1)+k_2)|u|^{q+1} }\\
%=&\disp{\int\int\triangle J(x-y)u(y)(u|u|^q(x))dy.}
%\leq&\disp{(|g|_{L^\infty(0,\omega; L^\infty(\Omega))}+1)(|\Omega|+1)^{\frac{1}{2}}|u|^{q+1}_{q+2}.}\\
%\leq&\disp{\frac{q+1}{q+2}((|f|_{L^\infty(0,\omega; L^\infty(\Omega))}+1)(|\Omega|+1)^{\frac{1}{2}})^{\frac{q+2}{q+1}}|u|^{q+2}_{q+2}+\frac{1}{q+2}}\\
%\leq&\disp{\frac{q+1}{q+2}(|f|_{L^\infty(0,\omega; L^\infty(\Omega))}+1)^2(|\Omega|+1)|u|^{q+2}_{q+2}+\frac{1}{q+2}.}\\
\end{array}
\label{3333cz2.5kksssssss121hhjjj4sddfffdfff114114}
\end{equation}
where we have used the fact that $2\int_\Omega   n_\varepsilon  ^{{{1+\alpha}}}\leq \frac{1}{\gamma_{1}}\frac{{\alpha}}{1+\alpha}\int_\Omega   n_\varepsilon  ^{2{+\alpha}}+C_{10}$ by using $\alpha>0$ and the Young inequality.
Now,  again,  from the Gagliardo--Nirenberg inequality and Lemma  \ref{2344ddfgczhhhh2.5ghju48cfg924ghyuji} that there exist  constants $\gamma_{3}> 0$ and $\gamma_{4} > 0$ such that
\begin{equation}\label{3333cz2.5kke345677ddff89001ddff214114114rrggjjkk}
\begin{array}{rl}&\disp\int_{t}^{t+\tau}\int_\Omega   n_\varepsilon  ^{{1+\alpha}}\\
=& \int_{t}^{t+\tau}\|  n_\varepsilon  ^{\alpha}\|_{L^{\frac{1+\alpha}{\alpha}}(\Omega)}^{\frac{1+\alpha}{\alpha}}\\
\leq& \gamma_{3}(\int_{t}^{t+\tau}\| \nabla n_\varepsilon  ^{\alpha}\|_{L^{2}(\Omega)}^{\frac{{\alpha}}{1+\alpha}} \| n_\varepsilon  ^{\alpha}\|_{L^{\frac{1}{\alpha}}(\Omega)}^{\frac{1}{1+\alpha}}+\int_{t}^{t+\tau}\| n_\varepsilon  ^{\alpha}\|_{L^{\frac{1}{\alpha}}(\Omega)})^{\frac{2(1+\alpha)}{\alpha}}\\
\leq& \gamma_{4}\int_{t}^{t+\tau}\| \nabla n_\varepsilon  ^{\alpha}\|_{L^{2}(\Omega)}^{2}+\gamma_{4}~~\mbox{for all}~~ t\in(0,T_{max,\varepsilon}-\tau)\\
\end{array}
\end{equation}
by \dref{bnmbncz2.5ghhjuyuivvbnnihjj}, where $\tau=\min\{1,\frac{1}{6}T_{max,\varepsilon}\}.$
Therefore, by \dref{3333cz2.5kke345677ddff89001ddff214114114rrggjjkk}, we conclude that
\begin{equation}\label{3333cz2.kkk5kke345677ddfdddddf89001ddff214114114rrggjjkk}
\begin{array}{rl}\disp\int_{t}^{t+\tau}\int_\Omega   n_\varepsilon  ^{{{1+\alpha}}}\leq& \gamma_{5}~~\mbox{for all}~~ t\in(0,T_{max,\varepsilon}-\tau).\\
\end{array}
\end{equation}
Thus, if we write $y(t) :=\|n_\varepsilon(\cdot, t)\|^{{{1+\alpha}}}_{L^{{1+\alpha}}(\Omega)}+
\|\nabla{c_{\varepsilon}}(\cdot, t)\|^{{{2}}}_{L^{{2}}(\Omega)}$
and $\rho(t) =2C_{6}^2\int_{\Omega}|\nabla u_{\varepsilon}(\cdot, t)|^2$
for $t\in(0,T_{max,\varepsilon})$, so that,  \dref{3333cz2.5kksssssss121hhjjj4sddfffdfff114114}  implies that
\begin{equation}
\begin{array}{rl}
y'(t)+h(t)
\leq&\disp{ \rho(t)y(t)+C_{11}~\mbox{for all}~t\in(0,T_{max,\varepsilon}),}\\
\end{array}
\label{ddfghgfhggddhjjjnkkll11cz2.5ghju48}
\end{equation}
where $h(t)=
\frac{1}{\gamma_{1}}\frac{{\alpha}}{1+\alpha}\int_\Omega  n_\varepsilon^{2{+\alpha}}(\cdot,t)\geq0.$
%Here we have used the fact that $\|A^{\frac{1}{2}}u_{\varepsilon}\|^{{{2}}}_{L^{{2}}(\Omega)} = \|\nabla u_{\varepsilon}\|^{{{2}}}_{L^{{2}}(\Omega)}.$
Now, \dref{3333cz2.kkk5kke345677ddfdddddf89001ddff214114114rrggjjkk} as well as  \dref{bnmbncz2.5ghhjuyuivvbnnihjj}  ensure that for all $t\in(0,T_{max,\varepsilon}-\tau)$
\begin{equation}
\begin{array}{rl}
\int_{t}^{t+\tau}\rho(s)ds
\leq&\disp{ C_{12}}\\
\end{array}
\label{ddfghgffgghsdfdfffffgggggddhjjjhjjnnhhkklld911cz2.5ghju48}
\end{equation}
and
\begin{equation}
\begin{array}{rl}
\int_{t}^{t+\tau}y(s)ds
\leq&\disp{ C_{13}.}\\
\end{array}
\label{ddfghgffgghsdfffhhhhdfffffgggggddhjjjhjjnnhhkklld911cz2.5ghju48}
\end{equation}
For given $t\in (0, T_{max,\varepsilon})$,  applying \dref{3333cz2.kkk5kke345677ddfdddddf89001ddff214114114rrggjjkk} as well as  \dref{bnmbncz2.5ghhjuyuivvbnnihjj} again,  we can choose $t_0 \geq 0$ such that $t_0\in [t-\tau, t)$ and
\begin{equation}
\begin{array}{rl}
&\disp{ y(\cdot,t_0)\leq C_{14},}\\
\end{array}
\label{czfvgb2.5ghhjuyghjjjuffghhhddfghhccvjkkklllhhjkkviihjj}
\end{equation}
which combined with \dref{ddfghgfhggddhjjjnkkll11cz2.5ghju48} implies that
\begin{equation}
\begin{array}{rl}
y(t)\leq&\disp{y(t_0)e^{\int_{t_0}^t\rho(s)ds}+\int_{t_0}^te^{\int_{s}^t\rho(\tau)d\tau}C_{11}ds}\\
\leq&\disp{C_{14}e^{C_{12}}+\int_{t_0}^te^{C_{12}}C_{11}ds}\\
\leq&\disp{C_{14}e^{C_{12}}+e^{C_{12}}C_{11}~~\mbox{for all}~~t\in(0,T_{max,\varepsilon})}\\
\end{array}
\label{czfvgb2.5ghhddffggjuygdddhjjjuffghhhddfghhccvjkkklllhhjkkviihjj}
\end{equation}
by using the Gronwall lemma.
Hence \dref{111334dddd444zjscz2.5297x9630222ssdd2114} and \dref{334dddd444zjscz2.5297x9630222ssdd2114} are a direct consequence of \dref{czfvgb2.5ghhddffggjuygdddhjjjuffghhhddfghhccvjkkklllhhjkkviihjj}, and \dref{3.10gghhjuuloollsdffffffgghhhy} holds by
 integrating \dref{ddfghgfhggddhjjjnkkll11cz2.5ghju48} over $(t,t+\tau)$.
And
 the proof of this Lemma is completed.
\end{proof}

\section{Global Solvability of the Regularized Problem \dref{1.1fghyuisda}}

With the higher regularity for $n_{\varepsilon}$ obtained in Section 3 (see Lemma \ref{gddffffnjjmmx1.73ddddd1426677gg}),  we can derive the following Lemma
by using the Gagliardo--Nirenberg inequality and an application of well-known arguments
from parabolic regularity theory. Here {\bf the convexity of
$\Omega$} is not needed.

%a similar reasoning for Lemma 3.2 will
%provide us higher regularity for u.
%
%With the help of Lemma \ref{gddffffnjjmmx1.73ddddd1426677gg}, in light of the Gagliardo--Nirenberg inequality and an application of well-known arguments
%from parabolic regularity theory, :

\begin{lemma}\label{lemma45630hhuujj}
Let $\alpha>0$ and $\gamma\in(\frac{1}{2},1).$  Then one can find a positive constant $C$ independent of $\varepsilon$
 such that %$\varepsilon_j \rightarrow0$ as
%$j \rightarrow\infty$ and that
\begin{equation}
\|n_\varepsilon(\cdot,t)\|_{L^\infty(\Omega)}  \leq C ~~\mbox{for all}~~ t\in(0,T_{max,\varepsilon}),
\label{zjscz2.5297x9630111kk}
\end{equation}
\begin{equation}
\|c_\varepsilon(\cdot,t)\|_{W^{1,\infty}(\Omega)}  \leq C ~~\mbox{for all}~~ t\in(0,T_{max,\varepsilon})
\label{zjscz2.5297x9630111kkhh}
\end{equation}
as well as
\begin{equation}
\|u_\varepsilon(\cdot,t)\|_{L^{\infty}(\Omega)}  \leq C ~~\mbox{for all}~~ t\in(0,T_{max,\varepsilon})
\label{zjscz2.5297x9630111kkhhffrr}
\end{equation}
and
\begin{equation}
\|A^\gamma u_\varepsilon(\cdot,t)\|_{L^{2}(\Omega)}  \leq C ~~\mbox{for all}~~ t\in(0,T_{max,\varepsilon}).
\label{zjscz2.5297x9630111kkhhffrreerr}
\end{equation}
Moreover,  for all $p > 1$, there exists $C(p) > 0$ satisfying
\begin{equation}
\|\nabla u_\varepsilon(\cdot,t)\|_{L^{p}(\Omega)}  \leq C(p) ~~\mbox{for all}~~ t\in(0,T_{max,\varepsilon}).
\label{zjscz2.5297x963011dkllldfff1kkhh}
\end{equation}
%where
\end{lemma}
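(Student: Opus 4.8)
The plan is to run a bootstrap that successively upgrades the a priori information of Lemma \ref{lemma4556664ddd5630223} (namely $\int_\Omega n_\varepsilon^{1+\alpha}\le C$, $\int_\Omega|\nabla c_\varepsilon|^2\le C$ and the space-time bound $\int_t^{t+\tau}\int_\Omega n_\varepsilon^{2+\alpha}\le C$) together with the $L^2$-bounds of Lemma \ref{lemmajddggmk43025xxhjklojjkkk} into the pointwise estimates claimed. The two decisive structural gains are: first, that $\alpha>0$ forces $1+\alpha>1$, which opens exactly the integrability window needed in the Stokes smoothing estimates below; second, that the tensor bound \dref{x1.73142vghf48gg} renders the chemotactic flux sublinear, $|n_\varepsilon S_\varepsilon(x,n_\varepsilon,c_\varepsilon)\nabla c_\varepsilon|\le C_S n_\varepsilon^{1-\alpha}|\nabla c_\varepsilon|$ when $\alpha<1$ and $\le C_S|\nabla c_\varepsilon|$ when $\alpha\ge1$, which keeps the $n_\varepsilon$-iteration from diverging. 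I emphasise that \emph{convexity of $\Omega$ enters none of these steps}.

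I would treat the fluid first, since $u_\varepsilon$ acts as a drift in the other two equations. Testing the third equation of \dref{1.1fghyuisda} by $Au_\varepsilon$ and estimating the convection term by the two-dimensional Ladyzhenskaya/Gagliardo--Nirenberg inequalities, so that $\|(Y_\varepsilon u_\varepsilon\cdot\nabla)u_\varepsilon\|_{L^2}\le C\|\nabla u_\varepsilon\|_{L^2}\|Au_\varepsilon\|_{L^2}^{1/2}$, while bounding the forcing through $\|n_\varepsilon\nabla\phi\|_{L^2}\le\|\nabla\phi\|_{L^\infty}\|n_\varepsilon\|_{L^2}$, Young's inequality yields a differential inequality of the shape
\[ \frac{d}{dt}\|\nabla u_\varepsilon\|_{L^2}^2+\|Au_\varepsilon\|_{L^2}^2\le \rho(t)\|\nabla u_\varepsilon\|_{L^2}^2+C\|n_\varepsilon\|_{L^2}^2 \]
with $\rho(t)=C\|\nabla u_\varepsilon\|_{L^2}^2$. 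Since both $\int_t^{t+\tau}\rho\le C$ and $\int_t^{t+\tau}\|n_\varepsilon\|_{L^2}^2\le C$ hold (by Lemma \ref{lemmajddggmk43025xxhjklojjkkk} and, via H\"older, by $\int_t^{t+\tau}\int_\Omega n_\varepsilon^{2+\alpha}\le C$ of Lemma \ref{lemma4556664ddd5630223}), the very device used at the end of the proof of Lemma \ref{lemma4556664ddd5630223}, namely choosing a good base point $t_0\in[t-\tau,t)$ and applying Gr\"onwall, gives $\|\nabla u_\varepsilon\|_{L^2}\le C$, whence $u_\varepsilon\in L^q(\Omega)$ for every finite $q$ in two dimensions. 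Passing then to the Stokes variation-of-constants formula and applying $A^\gamma$, I write the convection term in non-divergence form, $\|(Y_\varepsilon u_\varepsilon\cdot\nabla)u_\varepsilon\|_{L^\rho}\le\|Y_\varepsilon u_\varepsilon\|_{L^s}\|\nabla u_\varepsilon\|_{L^2}$ with $\rho<2$ arbitrarily close to $2$, so that its smoothing exponent stays integrable up to $\gamma<1$; the forcing $n_\varepsilon\nabla\phi\in L^{1+\alpha}$ contributes a smoothing exponent integrable precisely for $\gamma<\tfrac32-\tfrac1{1+\alpha}$, an interval meeting $(\tfrac12,1)$ exactly because $\alpha>0$. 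This secures $\|A^{\gamma_0}u_\varepsilon\|_{L^2}\le C$ for some $\gamma_0\in(\tfrac12,\tfrac32-\tfrac1{1+\alpha})$, and, since $D(A^{\gamma_0})\hookrightarrow L^\infty(\Omega)$ for $\gamma_0>\tfrac12$ in dimension two, also \dref{zjscz2.5297x9630111kkhhffrr}.

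With $u_\varepsilon$ now a bounded drift, I would close the chemotaxis part by a finite Moser-type iteration based on the Neumann heat-semigroup estimates of Lemma \ref{llssdrffmmggnnccvvccvvkkkkgghhkkllvvlemma45630}. Starting from $\|n_\varepsilon\|_{L^{1+\alpha}}\le C$ and $\|c_\varepsilon\|_{L^q}\le C$ (Lemma \ref{aasslemmafggg78630jklhhjj}), one alternates: feeding the current $L^p$-bound of $n_\varepsilon$ into the $c_\varepsilon$-equation upgrades $\nabla c_\varepsilon$ to a higher Lebesgue space, and inserting this into the Duhamel representation for $n_\varepsilon$, where the flux $\nabla\cdot(n_\varepsilon S_\varepsilon\nabla c_\varepsilon+u_\varepsilon n_\varepsilon)$ is controlled through the sublinear bound above together with $u_\varepsilon\in L^\infty$, raises the exponent of $n_\varepsilon$. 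Because the chemotactic contribution is sublinear in $n_\varepsilon$, each step gains a strictly positive amount of integrability, so after finitely many steps I reach $\|n_\varepsilon\|_{L^p}\le C$ for some $p>2$, then $\|\nabla c_\varepsilon\|_{L^\infty}\le C$, i.e. \dref{zjscz2.5297x9630111kkhh}, and finally $\|n_\varepsilon\|_{L^\infty}\le C$, i.e. \dref{zjscz2.5297x9630111kk}.

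Once $n_\varepsilon\in L^\infty$ is available the forcing $n_\varepsilon\nabla\phi$ lies in every $L^p$, so the constraint $\gamma<\tfrac32-\tfrac1{1+\alpha}$ evaporates; repeating the $A^\gamma$-argument then yields \dref{zjscz2.5297x9630111kkhhffrreerr} for all $\gamma\in(\tfrac12,1)$, and the embedding $D(A^\gamma)\hookrightarrow W^{1,p}(\Omega)$ (for $\gamma$ close enough to $1$) delivers \dref{zjscz2.5297x963011dkllldfff1kkhh}. The main obstacle throughout is the interaction of the two-dimensional Navier--Stokes nonlinearity with the circular coupling $n_\varepsilon\to u_\varepsilon\to(n_\varepsilon,c_\varepsilon)$: it is broken only by the quantitative room that $\alpha>0$ creates in the Stokes smoothing estimate, which is precisely where the convexity hypothesis of \cite{Wang23421215} is dispensed with.
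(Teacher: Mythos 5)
Your proposal is correct, and it reaches the conclusion by a genuinely different route than the paper, although both use the same toolbox (the energy/Gr\"onwall step for $\nabla u_\varepsilon$, semigroup smoothing, and a sup-interpolation argument). Your opening step coincides with the paper's Step 1: testing the projected Stokes equation by $Au_\varepsilon$ and closing via the base-point-plus-Gr\"onwall device fed by \dref{bnmbncz2.5ghhjuyuivvbnnihjj} and \dref{3.10gghhjuuloollsdffffffgghhhy}. After that you diverge in two ways. First, you extract $\|A^{\gamma_0}u_\varepsilon\|_{L^2}\le C$ for some $\gamma_0\in(\tfrac12,\tfrac32-\tfrac1{1+\alpha})$, hence $u_\varepsilon\in L^\infty$, \emph{before} knowing anything better than $n_\varepsilon\in L^\infty_tL^{1+\alpha}_x$ from Lemma \ref{lemma4556664ddd5630223}; the paper instead postpones all $A^\gamma$ estimates until after $n_\varepsilon\in L^\infty$ (its Step 4) and copes with the fluid in the $c$-equation using only $L^{12}$ bounds from $W^{1,2}\hookrightarrow L^p$. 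Second, for the $n$--$c$ coupling the paper makes a single, sharply tuned semigroup step, proving $\|\nabla c_\varepsilon\|_{L^{\bar q_0}}\le C$ with precisely $\bar q_0=\tfrac{2}{1-\alpha}>2$ (this is where $\alpha>0$ enters there), and then runs the $M(T)$ argument once, using only $|S_\varepsilon|\le C_S$, $\tilde h_\varepsilon=S_\varepsilon\nabla c_\varepsilon+u_\varepsilon\in L^{q_0}$ and mass conservation to get $M(T)\le C+CM(T)^b$, $b<1$; notably it never uses the sublinear factor $(1+n_\varepsilon)^{-\alpha}$ at that stage, whereas your bootstrap leans on it. Your iteration does work, but the one assertion you leave unproved is exactly the one that needs proof: that the per-step gain of integrability does not degenerate. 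It does not, and you should record the arithmetic: taking $\tfrac1q$ slightly above $(\tfrac1p-\tfrac12)_+$ in the $c$-step, the Duhamel step for $n_\varepsilon$ lowers $\tfrac1p$ by at least $\min\{\tfrac{2\alpha}{1+\alpha},\tfrac12\}$, a fixed positive amount since $p\ge1+\alpha$ throughout, so finitely many steps indeed suffice. The trade-off is clear: the paper's choice of $\bar q_0$ collapses your entire iteration into one step, while your early $u_\varepsilon\in L^\infty$ makes every subsequent convection term trivial. One final correction of commentary rather than mathematics: the convexity hypothesis of \cite{Wang23421215} is not dispensed with in the Stokes smoothing estimate, as you suggest; in both your argument and the paper's it is removed upstream, in the new functional of Lemma \ref{lemma4556664ddd5630223}, which both proofs take as input --- the present lemma never sees the boundary geometry at all.
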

\begin{proof}
In the  following, we let $C_i(i\in \mathbb{N})$ denote some different constants, which are independent of $\varepsilon$, and if no special explanation, they depend at most on $\Omega, \phi, \alpha, n_0, c_0$ and
$u_0$.

{\bf Step 1. The boundedness of $\|\nabla u_{\varepsilon}(\cdot, t)\|_{L^2(\Omega)}$  for all $t\in (0, T_{max,\varepsilon})$}

 Firstly, in view of \dref{bnmbncz2.5ghhjuyuivvbnnihjj} and \dref{3.10gghhjuuloollsdffffffgghhhy}, we derive there exists a positive constant $\alpha_0$ such that
\begin{equation}\label{3333cz2.kkk5kke345677ddfddffddddf89001ddff214114114rrggjjkk}
\begin{array}{rl}\disp\int_{t}^{t+\tau}\int_\Omega  |\nabla {u_{\varepsilon}}|^2\leq& \alpha_{0}~~\mbox{for all}~~ t\in(0,T_{max,\varepsilon}-\tau)\\
\end{array}
\end{equation}
and
\begin{equation}\label{3333cz2.kkk5kke345fffff677ddfdddddf89001ddff214114114rrggjjkk}
\begin{array}{rl}\disp\int_{t}^{t+\tau}\int_\Omega   n_\varepsilon  ^{2}\leq& \alpha_{0}~~\mbox{for all}~~ t\in(0,T_{max,\varepsilon}-\tau)\\
\end{array}
\end{equation}
with $\tau=\min\{1,\frac{1}{6}T_{max,\varepsilon}\}.$
Here we have used the H\"{o}lder inequality and $\alpha>0$.

Testing the projected Stokes equation $u_{\varepsilon t} +Au_{\varepsilon} =
 \mathcal{P}[-\kappa (Y_{\varepsilon}u_{\varepsilon} \cdot \nabla)u_{\varepsilon}+n_{\varepsilon}\nabla \phi]$
by $Au_{\varepsilon}$,  integrating by parts, we derive
%
%
%
%
%Then applying the Helmholtz projection to both sides of the third equation in \dref{1.1fghyuisda}, multiplying the
%resulting identity by $Au_{\varepsilon}$,and using the Young inequality we find that
\begin{equation}
\begin{array}{rl}
&\disp{\frac{1}{{2}}\frac{d}{dt}\|A^{\frac{1}{2}}u_{\varepsilon}\|^{{{2}}}_{L^{{2}}(\Omega)}+
\int_{\Omega}|Au_{\varepsilon}|^2 }\\
=&\disp{ \int_{\Omega}Au_{\varepsilon}\mathcal{P}(-\kappa
(Y_{\varepsilon}u_{\varepsilon} \cdot \nabla)u_{\varepsilon})+ \int_{\Omega}\mathcal{P}(n_{\varepsilon}\nabla\phi) Au_{\varepsilon}}\\
\leq&\disp{ \frac{1}{4}\int_{\Omega}|Au_{\varepsilon}|^2+2\kappa^2\int_{\Omega}
|(Y_{\varepsilon}u_{\varepsilon} \cdot \nabla)u_{\varepsilon}|^2+2 \|\nabla\phi\|^2_{L^\infty(\Omega)}\int_{\Omega}n_{\varepsilon}^2~
\mbox{for all}~t\in(0,T_{max,\varepsilon})}\\
\end{array}
\label{ddfghgffgghgghjjnnhhkklld911cz2.5ghju48}
\end{equation}
by the Young inequality.
However, using the Cauchy-Schwarz inequality,
%again from the Gagliardo--Nirenberg inequality,
there exists a positive constant $C_1$ and $C_{2}$
such that
\begin{equation}
\begin{array}{rl}
&2\kappa^2\disp\int_{\Omega}
|(Y_{\varepsilon}u_{\varepsilon} \cdot \nabla)u_{\varepsilon}|^2\\
\leq&\disp{ 2\kappa^2\|Y_{\varepsilon}u_{\varepsilon}\|^2_{L^4(\Omega)}\|\nabla u_{\varepsilon}\|^2_{L^4(\Omega)}}\\
\leq&\disp{ 2\kappa^2C_1[\|\nabla Y_{\varepsilon}u_{\varepsilon}\|_{L^2(\Omega)}\|Y_{\varepsilon}u_{\varepsilon}\|_{L^2(\Omega)}][\|A u_{\varepsilon}\|_{L^2(\Omega)}\|\nabla u_{\varepsilon}\|_{L^2(\Omega)}]
}\\
\leq&\disp{ 2\kappa^2C_1C_{2}\|\nabla Y_{\varepsilon}u_{\varepsilon}\|_{L^2(\Omega)}[\|A u_{\varepsilon}\|_{L^2(\Omega)}\|\nabla u_{\varepsilon}\|_{L^2(\Omega)}]
~~\mbox{for all}~~t\in(0,T_{max,\varepsilon})}\\
\end{array}
\label{ssdcfvgddfghgghjd911cz2.5ghju48}
\end{equation}
by  \dref{czfvgb2.5ghhjuyuccvviihjj} the fact that $\|Y_{\varepsilon}u_{\varepsilon}\|_{L^2(\Omega)}\leq
\|u_{\varepsilon}\|_{L^2(\Omega)}.$
%for all $t\in(0,T_{max,\varepsilon})$.
Now, from  $D( A^{\frac{1}{2}})  :=W^{1,2}_0(\Omega;\mathbb{R}^2) \cap L_{\sigma}^{2}(\Omega)$ and
 \dref{czfvgb2.5ghhjuyuccvviihjj}, it follows that %for some  $C_5> 0$ and $C_6 > 0$,
\begin{equation}
\|\nabla Y_{\varepsilon}u_{\varepsilon}\|_{L^2(\Omega)}=\|A^{\frac{1}{2}} Y_{\varepsilon}u_{\varepsilon}\|_{L^2(\Omega)}=\|Y_{\varepsilon} A^{\frac{1}{2}} u_{\varepsilon}\|_{L^2(\Omega)}\leq \|A^{\frac{1}{2}}  u_{\varepsilon}\|_{L^2(\Omega)}\leq \|\nabla  u_{\varepsilon}\|_{L^2(\Omega)}.
\label{ssdcfdhhgghjjnnhhkklld911cz2.5ghju48}
\end{equation}
%and
%\begin{equation}
%\|\nabla Y_{\varepsilon}u_{\varepsilon}\|_{L^2(\Omega)}=\|A^{\frac{1}{2}} Y_{\varepsilon}u_{\varepsilon}\|_{L^2(\Omega)}=\|Y_{\varepsilon} A^{\frac{1}{2}} u_{\varepsilon}\|_{L^2(\Omega)}\leq \|A^{\frac{1}{2}}  u_{\varepsilon}\|_{L^2(\Omega)}\leq \|\nabla  u_{\varepsilon}\|_{L^2(\Omega)}.
%\label{ssdcfvgdhhjjdfghgghjjnnhhkklld911cz2.5ghju48}
%\end{equation}
%Here we have used the well-known fact that $\|A(\cdot)\|_{L^{2}(\Omega)}$ defines a norm
%equivalent to $\|\cdot\|_{W^{2,2}(\Omega)}$ on $D(A)$ (see Theorem 2.1.1 of \cite{Sohr}).
%Now, recall  that
%$\|A^{\frac{1}{2}}u_{\varepsilon}\|^{{{2}}}_{L^{{2}}(\Omega)} = \|\nabla u_{\varepsilon}\|^{{{2}}}_{L^{{2}}(\Omega)}.$
Here by  Theorem 2.1.1 of \cite{Sohr}, we derive  that $\|A(\cdot)\|_{L^{2}(\Omega)}$ defines a norm
equivalent to $\|\cdot\|_{W^{2,2}(\Omega)}$ on $D(A)$.
Combining with this and
substituting \dref{ssdcfdhhgghjjnnhhkklld911cz2.5ghju48} into \dref{ssdcfvgddfghgghjd911cz2.5ghju48} yields
\begin{equation}
\begin{array}{rl}
2\kappa^2\disp\int_{\Omega}
|(Y_{\varepsilon}u_{\varepsilon} \cdot \nabla)u_{\varepsilon}|^2
\leq&\disp{ C_{3}\|A u_{\varepsilon}\|_{L^2(\Omega)}\|\nabla u_{\varepsilon}\|_{L^2(\Omega)}^2}\\
\leq&\disp{ \frac{1}{4}\|A u_{\varepsilon}\|_{L^2(\Omega)}+C_{3}^2\|\nabla u_{\varepsilon}\|_{L^2(\Omega)}^4
~~\mbox{for all}~~t\in(0,T_{max,\varepsilon})}\\
\end{array}
\label{ssdcfvgddfghggkllllllhjd911cz2.5ghju48}
\end{equation}
by using the Young inequality,
where $C_{3}= 2\kappa^2C_{1}C_{2}.$
Thus, if we write $z(t) :=\int_{\Omega}|\nabla u_{\varepsilon}(\cdot, t)|^2$
and $\rho(t) =2C_{3}^2\int_{\Omega}|\nabla u_{\varepsilon}(\cdot, t)|^2$
for $t\in(0,T_{max,\varepsilon})$, then \dref{ssdcfvgddfghgghjd911cz2.5ghju48} along with \dref{ddfghgffgghgghjjnnhhkklld911cz2.5ghju48} implies that
\begin{equation}
\begin{array}{rl}
z'(t)\leq\rho(t)z(t)+h(t)&\disp{~~\mbox{for all}~~t\in(0,T_{max,\varepsilon}),}\\
\end{array}
\label{ddfghgffgghggddhjjjhjjnnhhkklld911cz2.5ghju48}
\end{equation}
where $h(t)=4 \|\nabla\phi\|^2_{L^\infty(\Omega)}\int_{\Omega}n_{\varepsilon}^2(\cdot,t).$
Here we have used the fact that $\|A^{\frac{1}{2}}u_{\varepsilon}\|^{{{2}}}_{L^{{2}}(\Omega)} = \|\nabla u_{\varepsilon}\|^{{{2}}}_{L^{{2}}(\Omega)}.$
Now, \dref{3333cz2.kkk5kke345677ddfddffddddf89001ddff214114114rrggjjkk}
and \dref{3333cz2.kkk5kke345fffff677ddfdddddf89001ddff214114114rrggjjkk}    ensure that for all $t\in(0,T_{max,\varepsilon}-\tau)$
\begin{equation}
\begin{array}{rl}
\int_{t}^{t+\tau}\rho(s)ds
\leq&\disp{ 2C_{3}^2\alpha_0}\\
\end{array}
\label{ddfghgffgghsdfdfffffgggggddhjjjhjjnnhhkklld911cz2.5ghju48}
\end{equation}
and
\begin{equation}
\begin{array}{rl}
\int_{t}^{t+\tau}h(s)ds
\leq&\disp{ 4 \|\nabla\phi\|^2_{L^\infty(\Omega)}\alpha_0.}\\
\end{array}
\label{ddfghgffgghsdfffhhhhdfffffgggggddhjjjhjjnnhhkklld911cz2.5ghju48}
\end{equation}
%by $m>1.$
For given $t\in (0, T_{max,\varepsilon})$,  applying \dref{3333cz2.kkk5kke345677ddfddffddddf89001ddff214114114rrggjjkk} again,  we can choose $t_0 \geq 0$ such that $t_0\in [t-\tau, t)$ and
\begin{equation}
\begin{array}{rl}
&\disp{\int_{\Omega}   |\nabla u_{\varepsilon}(\cdot,t_0)|^2\leq C_{4},}\\
\end{array}
\label{czfvgb2.5ghhjuyghjjjuffghhhddfghhccvjkkklllhhjkkviihjj}
\end{equation}
which combined with \dref{ddfghgffgghggddhjjjhjjnnhhkklld911cz2.5ghju48} implies that
\begin{equation}
\begin{array}{rl}
z(t)\leq&\disp{z(t_0)e^{\int_{t_0}^t\rho(s)ds}+\int_{t_0}^te^{\int_{s}^t\rho(\tau)d\tau}h(s)ds}\\
\leq&\disp{C_{4}e^{2C_{3}^2\alpha_0}+\int_{t_0}^te^{2C_{3}^2\alpha_0}h(s)ds}\\
\leq&\disp{C_{4}e^{2C_{3}^2\alpha_0}+e^{2C_{3}^2\alpha_0}4 \|\nabla\phi\|^2_{L^\infty(\Omega)}\alpha_{0}~~\mbox{for all}~~t\in(0,T_{max,\varepsilon})}\\
\end{array}
\label{czfvgb2.5ghhddffggjuyghjjjuffghhhddfghhccvjkkklllhhjkkviihjj}
\end{equation}
by integration.

{\bf Step 2. The boundedness of $\|\nabla c_{\varepsilon}(\cdot, t)\|_{L^{q_0}(\Omega)}$ %and $\|\nabla w_{\varepsilon}(\cdot, t)\|_{L^{q_0}(\Omega)}$
   for all $t\in (0, T_{max,\varepsilon})$ and some $q_0>2$}

Next, since,  $W^{1,2}(\Omega)\hookrightarrow L^p(\Omega)$ for any $p>1,$ therefore, the  boundedness of $\|\nabla u_{\varepsilon}(\cdot, t)\|_{L^2(\Omega)}$ as well as  $\|\nabla c_{\varepsilon}(\cdot, t)\|_{L^2(\Omega)}$ (see \dref{334dddd444zjscz2.5297x9630222ssdd2114}) yields  that there exists a positive constant $C_{28}$ such that
\begin{equation}
\begin{array}{rl}
\|u_\varepsilon(\cdot, t)\|_{L^{p}(\Omega)}+\|c_\varepsilon(\cdot, t)\|_{L^{p}(\Omega)}\leq  C_{5}~~ \mbox{for all}~~ t\in(0,T_{max,\varepsilon})~~~\mbox{and}~~~p>1\\
\end{array}
\label{cz2.5jkkcvddffggggddfghhhjjhhdfffjjkvvhjkfffffkhhgll}
\end{equation}
by using the Poincar\'{e} inequality and \dref{czfvgb2.5ghhjuyuccvviihjj}.

Without loss of generality,  suppose  that $\alpha< 1$, since $\alpha \geq1$, can be
proved similarly and easily.
On the other hand,
from the
constants formula for $c_\varepsilon$, again, we derive the H\"{o}lder inequality that
\begin{equation}
\begin{array}{rl}
\disp{\|\nabla c_\varepsilon(\cdot, t)\|_{L^{\bar{q}_0}(\Omega)}}\leq&\disp{\|\nabla e^{t(\Delta-1)} c_0\|_{L^{\bar{q}_0}(\Omega)}+
\int_{0}^t\|\nabla e^{(t-s)(\Delta-1)}(n_\varepsilon(\cdot,s)\|_{L^{\bar{q}_0}(\Omega)}ds}\\
&\disp{+\int_{0}^t\|\nabla e^{(t-s)(\Delta-1)}\nabla \cdot(u_{\varepsilon}(\cdot,s) c_{\varepsilon}(\cdot,s))\|_{L^{\bar{q}_0}(\Omega)}ds,}\\
%\leq&\disp{\|\nabla e^{t(\Delta-1)} c_0\|_{L^{r}(\Omega)}+\int_{0}^t\|\nabla e^{(t-s)(\Delta-1)}(n(\cdot,s)+\nabla \cdot(u(\cdot,s) c(\cdot,s))\|_{L^r(\Omega)}ds}\\
%\leq&\disp{C_{25}\|\nabla c_0\|_{L^{r}(\Omega)}+\int_{0}^t[1+(t-s)^{-\frac{1}{2}-\frac{N}{2}(\frac{1}{p_0}-\frac{1}{r})}] e^{-\lambda(t-s)}\|n(\cdot,s)\|_{L^{p_0}(\Omega)}ds}\\
%&\disp{+\int_{0}^t\|\nabla e^{(t-s)(\Delta-1)}\nabla \cdot(u(\cdot,s)\nabla c(\cdot,s)\|_{L^r(\Omega)}ds}\\
%\leq&\disp{C_{20}\tau^{-\theta}+C_{20}\int_{0}^t(t-s)^{-\theta}e^{-\lambda(t-s)}+C_{20}\int_{0}^t(t-s)^{-\theta}e^{-\lambda(t-s)}[\|n(\cdot,s)\|_{L^4(\Omega)}+
%\|\nabla c(\cdot,s)\|_{L^4(\Omega)}]ds}\\
%\leq&\disp{C_{21}~~ \mbox{for all}~~ t\in(\tau,T_{max,\varepsilon})}\\
%\leq&\disp{\|A^\gamma
%e^{-tA}u_0\|_{L^2(\Omega)}+C_1|\kappa|\int_{0}^t(t-\tau)^{-\gamma}e^{-\lambda(t-\tau)}\|(Yu(\cdot,\tau) \cdot \nabla)u(\cdot,\tau)\|_{L^2(\Omega)}d\tau}\\
% &+\disp{
% C_1\int_{0}^t(t-\tau)^{-\gamma}e^{-\lambda(t-\tau)}[\|n(\cdot,\tau)\nabla\phi\|_{L^2(\Omega)}+\|g(\cdot,\tau)\|_{L^2(\Omega)}]d\tau}\\
\end{array}
\label{11144444zjccfgghhhfgbhjcvvvbscz2.5297x96301ku}
\end{equation}
%and
%\begin{equation}
%\begin{array}{rl}
%\disp{\|\nabla w_\varepsilon(\cdot, t)\|_{L^{\bar{q}_0}(\Omega)}}\leq&\disp{\|\nabla e^{t(\Delta-1)} w_0\|_{L^{\bar{q}_0}(\Omega)}+
%\int_{0}^t\|\nabla e^{(t-s)(\Delta-1)}(n_\varepsilon(\cdot,s)\|_{L^{\bar{q}_0}(\Omega)}ds}\\
%&\disp{+\int_{0}^t\|\nabla e^{(t-s)(\Delta-1)}\nabla \cdot(u_{\varepsilon}(\cdot,s) w_{\varepsilon}(\cdot,s))\|_{L^{\bar{q}_0}(\Omega)}ds,}\\
%%\leq&\disp{\|\nabla e^{t(\Delta-1)} c_0\|_{L^{r}(\Omega)}+\int_{0}^t\|\nabla e^{(t-s)(\Delta-1)}(n(\cdot,s)+\nabla \cdot(u(\cdot,s) c(\cdot,s))\|_{L^r(\Omega)}ds}\\
%%\leq&\disp{C_{25}\|\nabla c_0\|_{L^{r}(\Omega)}+\int_{0}^t[1+(t-s)^{-\frac{1}{2}-\frac{N}{2}(\frac{1}{p_0}-\frac{1}{r})}] e^{-\lambda(t-s)}\|n(\cdot,s)\|_{L^{p_0}(\Omega)}ds}\\
%%&\disp{+\int_{0}^t\|\nabla e^{(t-s)(\Delta-1)}\nabla \cdot(u(\cdot,s)\nabla c(\cdot,s)\|_{L^r(\Omega)}ds}\\
%%\leq&\disp{C_{20}\tau^{-\theta}+C_{20}\int_{0}^t(t-s)^{-\theta}e^{-\lambda(t-s)}+C_{20}\int_{0}^t(t-s)^{-\theta}e^{-\lambda(t-s)}[\|n(\cdot,s)\|_{L^4(\Omega)}+
%%\|\nabla c(\cdot,s)\|_{L^4(\Omega)}]ds}\\
%%\leq&\disp{C_{21}~~ \mbox{for all}~~ t\in(\tau,T_{max,\varepsilon})}\\
%%\leq&\disp{\|A^\gamma
%%e^{-tA}u_0\|_{L^2(\Omega)}+C_1|\kappa|\int_{0}^t(t-\tau)^{-\gamma}e^{-\lambda(t-\tau)}\|(Yu(\cdot,\tau) \cdot \nabla)u(\cdot,\tau)\|_{L^2(\Omega)}d\tau}\\
%% &+\disp{
%% C_1\int_{0}^t(t-\tau)^{-\gamma}e^{-\lambda(t-\tau)}[\|n(\cdot,\tau)\nabla\phi\|_{L^2(\Omega)}+\|g(\cdot,\tau)\|_{L^2(\Omega)}]d\tau}\\
%\end{array}
%\label{222111144444zjccfgghhhfgbhjcvvvbscz2.5297x96301ku}
%\end{equation}
where $2<\bar{q}_0:=\frac{2}{1-\alpha}<\frac{2(1+\alpha)}{1-\alpha}$ by $0<\alpha<1$.
%To estimate the terms on the right of  \dref{11144444zjccfgghhhfgbhjcvvvbscz2.5297x96301ku} and \dref{222111144444zjccfgghhhfgbhjcvvvbscz2.5297x96301ku},
Hence, we  use Lemma \ref{llssdrffmmggnnccvvccvvkkkkgghhkkllvvlemma45630}  again   to obtain there
 exist positive constants $C_{i}(i=6\cdots 12)$ and $\lambda$ such that
\begin{equation}
\begin{array}{rl}
\|\nabla e^{t(\Delta-1)} c_0\|_{L^{\bar{q}_0}(\Omega)}\leq C_{6}~~ \mbox{for all}~~ t\in(0,T_{max,\varepsilon})\\
%\leq&\disp{C_{20}\tau^{-\theta}+C_{20}\int_{0}^t(t-s)^{-\theta}e^{-\lambda(t-s)}+C_{20}\int_{0}^t(t-s)^{-\theta}e^{-\lambda(t-s)}[\|n(\cdot,s)\|_{L^4(\Omega)}+
%\|\nabla c(\cdot,s)\|_{L^4(\Omega)}]ds}\\
%\leq&\disp{C_{21}~~ \mbox{for all}~~ t\in(\tau,T_{max,\varepsilon})}\\
%\leq&\disp{\|A^\gamma
%e^{-tA}u_0\|_{L^2(\Omega)}+C_1|\kappa|\int_{0}^t(t-\tau)^{-\gamma}e^{-\lambda(t-\tau)}\|(Yu(\cdot,\tau) \cdot \nabla)u(\cdot,\tau)\|_{L^2(\Omega)}d\tau}\\
% &+\disp{
% C_1\int_{0}^t(t-\tau)^{-\gamma}e^{-\lambda(t-\tau)}[\|n(\cdot,\tau)\nabla\phi\|_{L^2(\Omega)}+\|g(\cdot,\tau)\|_{L^2(\Omega)}]d\tau}\\
\end{array}
\label{zjccffgbhjcghhhjjjvvvbscz2.5297x96301ku}
\end{equation}
% Hence, in view of $L^p$-$L^q$ estimates associated heat semigroup, \dref{ccvvx1.731426677gg},  \dref{cz2.5715jkkcvccvvhjjjkddfffffkhhgll} , we derive  that there exist positive constants $C_{26},C_{27}$ and $C_{28}$ such that for all $t\in(0,T_{max,\varepsilon})$,
%Due to \dref{334444zjscz2.52yyujiii97dfggggx96302222114} and $-\frac{1}{2}-\frac{N}{2}(\frac{1}{1+\alpha}-\frac{1}{q})>-1$, using Lemma \ref{llssdrffmmggnnccvvccvvkkkkgghhkkllvvlemma45630} again,  the second term of the right-hand side is estimated
%as
\begin{equation}
\begin{array}{rl}
&\disp{\int_{0}^t\|\nabla e^{(t-s)(\Delta-1)}n_\varepsilon(\cdot,s)\|_{L^{\bar{q}_0}(\Omega)}ds}\\
\leq&\disp{C_{7}\int_{0}^t[1+(t-s)^{-\frac{1}{2}-\frac{2}{2}(\frac{1}{1+\alpha}-\frac{1}{\bar{q}_0})}] e^{-\lambda(t-s)}
\|n_\varepsilon(\cdot,s)\|_{L^{1+\alpha}(\Omega)}ds}\\
%&\disp{+\int_{0}^t\|\nabla e^{(t-s)(\Delta-1)}\nabla \cdot(u(\cdot,s)\nabla c(\cdot,s)\|_{L^r(\Omega)}ds}\\
\leq&\disp{C_{8}~~ \mbox{for all}~~ t\in(0,T_{max,\varepsilon})}\\
%\leq&\disp{\|A^\gamma
%e^{-tA}u_0\|_{L^2(\Omega)}+C_1|\kappa|\int_{0}^t(t-\tau)^{-\gamma}e^{-\lambda(t-\tau)}\|(Yu(\cdot,\tau) \cdot \nabla)u(\cdot,\tau)\|_{L^2(\Omega)}d\tau}\\
% &+\disp{
% C_1\int_{0}^t(t-\tau)^{-\gamma}e^{-\lambda(t-\tau)}[\|n(\cdot,\tau)\nabla\phi\|_{L^2(\Omega)}+\|g(\cdot,\tau)\|_{L^2(\Omega)}]d\tau}\\
\end{array}
\label{zjccffgbhjcvvvbscz2.5297x96301ku}
\end{equation}
as well as
\begin{equation}
\begin{array}{rl}
&\disp{\int_{0}^t\|\nabla e^{(t-s)(\Delta-1)}\nabla \cdot(u_\varepsilon(\cdot,s) c_\varepsilon(\cdot,s))\|_{L^{\bar{q}_0}(\Omega)}ds}\\
%\leq&\disp{\int_{0}^t\| e^{(t-s)(\Delta-1)}\nabla \cdot(u(\cdot,s) c(\cdot,s))\|_{W^{1,q}(\Omega)}ds}\\
%\leq&\disp{C_{30}\int_{0}^t\| e^{(t-s)(\Delta-1)}\nabla \cdot(u(\cdot,s) c(\cdot,s))\|_{W^{1,q}(\Omega)}ds}\\
\leq&\disp{C_{9}\int_{0}^t\|(-\Delta+1)^\iota e^{(t-s)(\Delta-1)}\nabla \cdot(u_\varepsilon(\cdot,s) c_\varepsilon(\cdot,s))\|_{L^{6}(\Omega)}ds}\\
\leq&\disp{C_{10}\int_{0}^t(t-s)^{-\iota-\frac{1}{2}-\tilde{\kappa}} e^{-\lambda(t-s)}\|u_\varepsilon(\cdot,s) c_\varepsilon(\cdot,s)\|_{L^{6}(\Omega)}ds}\\
\leq&\disp{C_{11}\int_{0}^t(t-s)^{-\iota-\frac{1}{2}-\tilde{\kappa}} e^{-\lambda(t-s)}\|u_\varepsilon(\cdot,s)\|_{L^{12}(\Omega)}\| c_\varepsilon(\cdot,s)\|_{L^{12}(\Omega)}ds}\\
\leq&\disp{C_{12}~~ \mbox{for all}~~ t\in(0,T_{max,\varepsilon})}\\
%\leq&\disp{\|\nabla e^{t(\Delta-1)} c_0\|_{L^{q}(\Omega)}+\int_{0}^t\|\nabla e^{(t-s)(\Delta-1)}(n(\cdot,s)+\nabla \cdot(u(\cdot,s) c(\cdot,s))\|_{L^r(\Omega)}ds}\\
%\leq&\disp{C_{25}\|\nabla c_0\|_{L^{q}(\Omega)}+\int_{0}^t[1+(t-s)^{-\frac{1}{2}-\frac{N}{2}(\frac{1}{1+\alpha}-\frac{1}{r})}] e^{-\lambda(t-s)}\|n(\cdot,s)\|_{L^{1+\alpha}(\Omega)}ds}\\
%&\disp{+\int_{0}^t\|\nabla e^{(t-s)(\Delta-1)}\nabla \cdot(u(\cdot,s)\nabla c(\cdot,s)\|_{L^r(\Omega)}ds}\\
%\leq&\disp{C_{20}\tau^{-\theta}+C_{20}\int_{0}^t(t-s)^{-\theta}e^{-\lambda(t-s)}+C_{20}\int_{0}^t(t-s)^{-\theta}e^{-\lambda(t-s)}[\|n(\cdot,s)\|_{L^4(\Omega)}+
%\|\nabla c(\cdot,s)\|_{L^4(\Omega)}]ds}\\
%\leq&\disp{C_{21}~~ \mbox{for all}~~ t\in(\tau,T_{max,\varepsilon})}\\
%\leq&\disp{\|A^\gamma
%e^{-tA}u_0\|_{L^2(\Omega)}+C_1|\kappa|\int_{0}^t(t-\tau)^{-\gamma}e^{-\lambda(t-\tau)}\|(Yu(\cdot,\tau) \cdot \nabla)u(\cdot,\tau)\|_{L^2(\Omega)}d\tau}\\
% &+\disp{
% C_1\int_{0}^t(t-\tau)^{-\gamma}e^{-\lambda(t-\tau)}[\|n(\cdot,\tau)\nabla\phi\|_{L^2(\Omega)}+\|g(\cdot,\tau)\|_{L^2(\Omega)}]d\tau}\\
\end{array}
\label{1111zjccffgbhjcvdgghhhhdfgghhvvbscz2.5297x96301ku}
\end{equation}
by using \dref{cz2.5jkkcvddffggggddfghhhjjhhdfffjjkvvhjkfffffkhhgll}, \dref{ccvvx1.731426677gg} as well as  $\frac{1}{2}+\frac{2}{2}(\frac{1}{6}-\frac{1}{\bar{q}_0})<\iota$ and $\min\{-\iota-\frac{1}{2}-\tilde{\kappa},
-\frac{1}{2}-\frac{2}{2}(\frac{1}{1+\alpha}-\frac{1}{\bar{q}_0})\}>-1$,
where $\iota=\frac{11}{24},\tilde{\kappa}=\frac{1}{48}$.
%Here we have use the fact that   , \dref{334444zjscz2.5297x96302222114}
%and some $\lambda>0.$
%For the second term,  in light of \dref{334444zjscz2.5297dfggggx96302222114} and
%we  the fact that $-\frac{1}{2}-\frac{N}{2}(\frac{1}{1+\alpha}-\frac{1}{r})>-1$ to deduce
Therefore,
collecting  \dref{11144444zjccfgghhhfgbhjcvvvbscz2.5297x96301ku}--\dref{1111zjccffgbhjcvdgghhhhdfgghhvvbscz2.5297x96301ku}, yields to % that
\begin{equation}
\begin{array}{rl}
\|\nabla c_\varepsilon(\cdot, t)\|_{L^{\bar{q}_0}(\Omega)}\leq  C_{13}~~ \mbox{for all}~~ t\in(0,T_{max,\varepsilon}).\\
\end{array}
\label{cz2.5jkkcvddffgggghhdfffjjkvvhjkfffffkhhgll}
\end{equation}
{\bf Step 3. The boundedness of $\|n_{\varepsilon}(\cdot, t)\|_{L^{\infty}(\Omega)}$  for all  $t\in (0, T_{max,\varepsilon})$}

Fix $T\in (0, T_{max,\varepsilon})$.  Let $M(T):=\sup_{t\in(0,T)}\|n_{\varepsilon}(\cdot,t)\|_{L^\infty(\Omega)}$ and $\tilde{h}_{\varepsilon}:=S_\varepsilon(x, n_{\varepsilon}, c_{\varepsilon})\nabla c_{\varepsilon}+u_\varepsilon$. Then by  \dref{cz2.5jkkcvddffgggghhdfffjjkvvhjkfffffkhhgll}, \dref{x1.73142vghf48gg}  and \dref{cz2.5jkkcvddffggggddfghhhjjhhdfffjjkvvhjkfffffkhhgll},
there exists $C_{14} > 0$ such that
\begin{equation}
\begin{array}{rl}
\|\tilde{h}_{\varepsilon}(\cdot, t)\|_{L^{q_0}(\Omega)}\leq&\disp{C_{14}~~ \mbox{for all}~~ t\in(0,T_{max,\varepsilon}),}\\
\end{array}
\label{cz2ddff.57151ccvhhjjjkkkuuifghhhivhccvvhjjjkkhhggjjllll}
\end{equation}
where $q_0=\frac{2}{1-\alpha}$.
Hence, due to the fact that $\nabla\cdot u_{\varepsilon}=0$,  again,  by means of an
associate variation-of-constants formula for $c_{\varepsilon}$, we can derive
\begin{equation}
n_{\varepsilon}(t)=e^{(t-t_0)\Delta}n_{\varepsilon}(\cdot,t_0)-\int_{t_0}^{t}e^{(t-s)\Delta}\nabla\cdot(n_{\varepsilon}(\cdot,s)\tilde{h}_{\varepsilon}(\cdot,s)) ds,~~ t\in(t_0, T),
\label{5555fghbnmcz2.5ghjjjkkklu48cfg924ghyuji}
\end{equation}
where $t_0 := (t-1)_{+}$.
If $t\in(0,1]$,
by virtue of the maximum principle, we derive that
\begin{equation}
\begin{array}{rl}
\|e^{(t-t_0)\Delta}n_{\varepsilon}(\cdot,t_0)\|_{L^{\infty}(\Omega)}\leq &\disp{\|n_0\|_{L^{\infty}(\Omega)},}\\
%\leq&\disp{C_{20}\tau^{-\theta}+C_{20}\int_{0}^t(t-s)^{-\theta}e^{-\lambda(t-s)}+C_{20}\int_{0}^t(t-s)^{-\theta}e^{-\lambda(t-s)}[\|n(s)\|_{L^4(\Omega)}+
%\|\nabla c(s)\|_{L^4(\Omega)}]ds}\\
%\leq&\disp{C_{21}~~ \mbox{for all}~~ t\in(\tau,T_{max,\varepsilon})}\\
%\leq&\disp{\|A^\gamma
%e^{-tA}u_0\|_{L^2(\Omega)}+C_1|\kappa|\int_{0}^t(t-\tau)^{-\gamma}e^{-\lambda(t-\tau)}\|(Yu(\cdot,\tau) \cdot \nabla)u(\cdot,\tau)\|_{L^2(\Omega)}d\tau}\\
% &+\disp{
% C_1\int_{0}^t(t-\tau)^{-\gamma}e^{-\lambda(t-\tau)}[\|n(\cdot,\tau)\nabla\phi\|_{L^2(\Omega)}+\|g(\cdot,\tau)\|_{L^2(\Omega)}]d\tau}\\
\end{array}
\label{zjccffgbhjffghhjcghhhjjjvvvbscz2.5297x96301ku}
\end{equation}
while if $t > 1$ then in light of the  $L^p$-$L^q$ estimates for the Neumann heat semigroup and Lemma \ref{fvfgfflemma45}, we conclude that
\begin{equation}
\begin{array}{rl}
\|e^{(t-t_0)\Delta}n_{\varepsilon}(\cdot,t_0)\|_{L^{\infty}(\Omega)}\leq &\disp{C_{15}(t-t_0)^{-\frac{2}{2}}\|n_{\varepsilon}(\cdot,t_0)\|_{L^{1}(\Omega)}\leq C_{16}.}\\
%\leq&\disp{C_{20}\tau^{-\theta}+C_{20}\int_{0}^t(t-s)^{-\theta}e^{-\lambda(t-s)}+C_{20}\int_{0}^t(t-s)^{-\theta}e^{-\lambda(t-s)}[\|n(s)\|_{L^4(\Omega)}+
%\|\nabla c(s)\|_{L^4(\Omega)}]ds}\\
%\leq&\disp{C_{21}~~ \mbox{for all}~~ t\in(\tau,T_{max,\varepsilon})}\\
%\leq&\disp{\|A^\gamma
%e^{-tA}u_0\|_{L^2(\Omega)}+C_1|\kappa|\int_{0}^t(t-\tau)^{-\gamma}e^{-\lambda(t-\tau)}\|(Yu(\cdot,\tau) \cdot \nabla)u(\cdot,\tau)\|_{L^2(\Omega)}d\tau}\\
% &+\disp{
% C_1\int_{0}^t(t-\tau)^{-\gamma}e^{-\lambda(t-\tau)}[\|n(\cdot,\tau)\nabla\phi\|_{L^2(\Omega)}+\|g(\cdot,\tau)\|_{L^2(\Omega)}]d\tau}\\
\end{array}
\label{zjccffgbhjffghhjcghghjkjjhhjjjvvvbscz2.5297x96301ku}
\end{equation}
%Again by the maximum principle we have
%\begin{equation}
%\begin{array}{rl}
%\disp\int_{t_0}^t \|e^{(t-s)\Delta}(an(\cdot,s)-\mu n^r(\cdot,s))\|_{L^\infty(\Omega)}ds\leq&
%\disp\int_{t_0}^t\sup_{n\geq0}(an-\mu n^r)_{+}ds\\
%\leq&\disp\int_{t_0}^ta_{+}^{\frac{r}{r-1}}\mu^{-\frac{1}{r-1}}r^{-\frac{r}{r-1}}(r-1)\\
%\leq&\disp 2a_{+}^{\frac{r}{r-1}}\mu^{-\frac{1}{r-1}}r^{-\frac{r}{r-1}}(r-1)~~\mbox{for all}~~ t\in(0, T).\\
%\end{array}
%\label{ccvbccvvbbnnnvddffvbbcvvbccvhhhhhjjvfbbnfgbghjjccmmllffvvggcvvvvbbjjkkdffzjscz2.5297x9630xxy}
%\end{equation}
The last term on the right-hand side of \dref{5555fghbnmcz2.5ghjjjkkklu48cfg924ghyuji} is estimated as follows.
Fix an arbitrary $p\in(2,q_0)$ and then once more invoke known smoothing
properties of the
Stokes semigroup  and the H\"{o}lder inequality to find $C_{17} > 0$ and $C_{18} > 0$  such that
\begin{equation}
\begin{array}{rl}
&\disp\int_{t_0}^t\| e^{(t-s)\Delta}\nabla\cdot(n_{\varepsilon}(\cdot,s)\tilde{h}_{\varepsilon}(\cdot,s)\|_{L^\infty(\Omega)}ds\\
\leq&\disp C_{17}\int_{t_0}^t(t-s)^{-\frac{1}{2}-\frac{2}{2p}}\|n_{\varepsilon}(\cdot,s)\tilde{h}_{\varepsilon}(\cdot,s)\|_{L^p(\Omega)}ds\\
\leq&\disp C_{17}\int_{t_0}^t(t-s)^{-\frac{1}{2}-\frac{2}{2p}}\| n_{\varepsilon}(\cdot,s)\|_{L^{\frac{pq_0}{q_0-p}}(\Omega)}\|\tilde{h}_{\varepsilon}(\cdot,s)\|_{L^{q_0}(\Omega)}ds\\
\leq&\disp C_{17}\int_{t_0}^t(t-s)^{-\frac{1}{2}-\frac{2}{2p}}\| u_{\varepsilon}(\cdot,s)\|_{L^{\infty}(\Omega)}^b\| u_{\varepsilon}(\cdot,s)\||_{L^1(\Omega)}^{1-b}\|\tilde{h}_{\varepsilon}(\cdot,s)\|_{L^{q_0}(\Omega)}ds\\
\leq&\disp C_{18}M^b(T)~~\mbox{for all}~~ t\in(0, T),\\
\end{array}
\label{ccvbccvvbbnnndffghhjjvcvvbccfbbnfgbghjjccmmllffvvggcvvvvbbjjkkdffzjscz2.5297x9630xxy}
\end{equation}
where $b:=\frac{pq_0-q_0+p}{pq_0}\in(0,1)$.
%and
%$$C_{18}:=C_{17}C_1^{2-b}\int_{0}^{1}\sigma^{-\frac{1}{2}-\frac{2}{2p}}d\sigma.$$
Since $p>2$, we conclude that
$-\frac{1}{2}-\frac{2}{2p}>-1$.
In combination with \dref{5555fghbnmcz2.5ghjjjkkklu48cfg924ghyuji}--\dref{ccvbccvvbbnnndffghhjjvcvvbccfbbnfgbghjjccmmllffvvggcvvvvbbjjkkdffzjscz2.5297x9630xxy} and using the definition of $M(T)$
we obtain
$C_{19} > 0$ such that
\begin{equation}
\begin{array}{rl}
&\disp  M(T)\leq C_{19}+C_{19}M^b(T)~~\mbox{for all}~~ T\in(0, T_{max,\varepsilon}).\\
\end{array}
\label{ccvbccvvbbnnndffghhjjvcvvfghhhbccfbbnfgbghjjccmmllffvvggcvvvvbbjjkkdffzjscz2.5297x9630xxy}
\end{equation}
Hence, with the help of $b<1$,  in light of  $T\in (0, T_{max,\varepsilon})$ was arbitrary,
we can get
\begin{equation}
\begin{array}{rl}
\|n_{\varepsilon}(\cdot, t)\|_{L^{\infty}(\Omega)}\leq&\disp{C_{20}~~ \mbox{for all}~~ t\in(0,T_{max,\varepsilon})}\\
\end{array}
\label{cz2.57ghhhh151ccvhhjjjkkkffgghhuuiivhccvvhjjjkkhhggjjllll}
\end{equation}
by   some basic calculation.

{\bf Step 4. The boundedness of $\|A^\gamma u_{\varepsilon}(\cdot, t)\|_{L^2(\Omega)}$ (with $\gamma\in ( \frac{1}{2}, 1)$) and $\| u_{\varepsilon}(\cdot, t)\|_{L^{\infty}(\Omega)}$ for all $t\in (0, T_{max,\varepsilon})$}

Applying  the variation-of-constants formula for the projected version of the third
equation in \dref{1.1fghyuisda}, we derive that
$$u_\varepsilon(\cdot, t) = e^{-tA}u_0 +\int_0^te^{-(t-\tau)A}
\mathcal{P}[n_\varepsilon(\cdot,t)\nabla\phi-\kappa
(Y_{\varepsilon}u_{\varepsilon} \cdot \nabla)u_{\varepsilon}]d\tau~~ \mbox{for all}~~ t\in(0,T_{max,\varepsilon}).$$
Therefore, with the help of the  standard smoothing
properties of the Stokes semigroup we derive  that for all $t\in(0,T_{max,\varepsilon})$ and $\gamma\in ( \frac{1}{2}, 1)$,
there exist $C_{21} > 0$ and $C_{22} > 0$ such that
\begin{equation}
\begin{array}{rl}
\|A^\gamma u_{\varepsilon}(\cdot, t)\|_{L^2(\Omega)}\leq&\disp{\|A^\gamma
e^{-tA}u_0\|_{L^2(\Omega)} +\int_0^t\|A^\gamma e^{-(t-\tau)A}h_{\varepsilon}(\cdot,\tau)d\tau\|_{L^2(\Omega)}d\tau}\\
\leq&\disp{\|A^\gamma u_0\|_{L^2(\Omega)} +C_{21}\int_0^t(t-\tau)^{-\gamma-\frac{2}{2}(\frac{1}{p_0}-\frac{1}{2})}e^{-\lambda(t-\tau)}\|h_{\varepsilon}(\cdot,\tau)\|_{L^{p_0}(\Omega)}d\tau}\\
\leq&\disp{C_{22}+C_{21}\int_0^t(t-\tau)^{-\gamma-\frac{2}{2}(\frac{1}{p_0}-\frac{1}{2})}e^{-\lambda(t-\tau)}\|h_{\varepsilon}(\cdot,\tau)
\|_{L^{p_0}(\Omega)}d\tau}\\
\end{array}
\label{ssddcz2.571hhhhh51222ccvvhddfccvvhjjjkkhhggjjllll}
\end{equation}
by using \dref{ccvvx1.731426677gg},
where   $h_{\varepsilon}=\mathcal{P}[n_\varepsilon(\cdot,t)\nabla\phi-\kappa
(Y_{\varepsilon}u_{\varepsilon} \cdot \nabla)u_{\varepsilon}]$ and $p_0\in (1,2)$ which satisfies %that
\begin{equation}p_0>\frac{2}{3-2\gamma}.
\label{cz2.571hhhhh51222ccvvhddfccffgghhhhvvhjjjkkhhggjjllll}
\end{equation}
Now, in order to estimate $\|h_{\varepsilon}(\cdot,\tau)
\|_{L^{p_0}(\Omega)}$, we use the H\"{o}lder inequality and the continuity of $\mathcal{P}$ in $L^p(\Omega;\mathbb{R}^2)$ (see \cite{Fujiwara66612186}) as well as the boundedness of $\|n_{\varepsilon}(\cdot, t)\|_{L^{\infty}(\Omega)}$ (for all  $t\in (0, T_{max,\varepsilon})$), we see that
there exist $C_{23},C_{24},C_{25}, C_{26}> 0$ and  $C_{27} > 0$ that
\begin{equation}
\begin{array}{rl}
\|h_{\varepsilon}(\cdot,t)\|_{L^{p_0}(\Omega)}\leq& C_{23}\|(Y_{\varepsilon}u_{\varepsilon} \cdot \nabla)u_{\varepsilon}(\cdot,t)\|_{L^{p_0}(\Omega)}+ C_{23}\|n_\varepsilon(\cdot,t)\|_{L^{p_0}(\Omega)}\\
\leq& C_{24}\|Y_{\varepsilon}u_{\varepsilon}\|_{L^{\frac{2p_0}{2-p_0}}(\Omega)} \|\nabla u_{\varepsilon}(\cdot,t)\|_{L^{2}(\Omega)}+ C_{24}\\
\leq& C_{25}\|\nabla Y_{\varepsilon}u_{\varepsilon}\|_{L^{2}(\Omega)} \|\nabla u_{\varepsilon}(\cdot,t)\|_{L^{2}(\Omega)}+ C_{24}\\
\leq& C_{26}\|\nabla u_{\varepsilon}\|_{L^{2}(\Omega)}^2+ C_{24}\\
\leq& C_{27}~~~\mbox{for all}~~ t\in(0,T_{max,\varepsilon})\\
\end{array}
\label{cz2.571hhhhh5122ddfddfffgg2ccvvhddfccffgghhhhvvhjjjkkhhggjjllll}
\end{equation}
by using $W^{1,2}(\Omega)\hookrightarrow L^\frac{2p_0}{2-p_0}(\Omega)$ and the boundedness of $\|\nabla u_{\varepsilon}(\cdot,t)\|_{L^{2}(\Omega)}.$
%is the same as Lemma \ref{lemma45566645630223}.
Inserting \dref{cz2.571hhhhh5122ddfddfffgg2ccvvhddfccffgghhhhvvhjjjkkhhggjjllll} into \dref{ssddcz2.571hhhhh51222ccvvhddfccvvhjjjkkhhggjjllll} and applying \dref{cz2.571hhhhh51222ccvvhddfccffgghhhhvvhjjjkkhhggjjllll}, we conclude that
\begin{equation}
\begin{array}{rl}
\|A^\gamma u_{\varepsilon}(\cdot, t)\|_{L^2(\Omega)}\leq&\disp{C_{28}\int_0^t(t-\tau)^{-\gamma-\frac{2}{2}(\frac{1}{p_0}-\frac{1}{2})}e^{-\lambda(t-\tau)}\|h_{\varepsilon}(\cdot,\tau)
\|_{L^{p_0}(\Omega)}d\tau,~~~\mbox{for all}~~ t\in(0,T_{max,\varepsilon}),}\\
\end{array}
\label{ssddcz2.571hhjjjhhhhh51222ccvvhddfccvvhjjjksdddffkhhggjjllll}
\end{equation}
where we have used the fact that
$$\begin{array}{rl}\disp\int_{0}^t(t-\tau)^{-\gamma-\frac{2}{2}(\frac{1}{p_0}-\frac{1}{2})}e^{-\lambda(t-\tau)}ds
\leq&\disp{\int_{0}^{\infty}\sigma^{-\gamma-\frac{2}{2}(\frac{1}{p_0}-\frac{1}{2})} e^{-\lambda\sigma}d\sigma<+\infty}\\
\end{array}
$$
by \dref{cz2.571hhhhh51222ccvvhddfccffgghhhhvvhjjjkkhhggjjllll}.
%Observe that $\gamma>\frac{3}{4},$
Observe that  $D(A^\gamma)$ is continuously embedded into $L^\infty(\Omega)$ by $\gamma>\frac{1}{2},$ so that,  \dref{ssddcz2.571hhjjjhhhhh51222ccvvhddfccvvhjjjksdddffkhhggjjllll} yields to
 \begin{equation}
\begin{array}{rl}
\|u_{\varepsilon}(\cdot, t)\|_{L^\infty(\Omega)}\leq  C_{29}~~ \mbox{for all}~~ t\in(0,T_{max,\varepsilon}).\\
\end{array}
\label{cz2.5jkkcvvvhjdsdfffffdkfffffkhhgll}
\end{equation}

{\bf Step 5. The boundedness of $\|\nabla u_{\varepsilon}(\cdot, t)\|_{L^p(\Omega)}$ (with $p>1$) for all $t\in (0, T_{max,\varepsilon})$}

To verify this, given $p > 1$ we choose  $\gamma\in(\frac{1}{2},1)$ suitably large fulfilling $\gamma > 1- \frac{1}{p}$. Then by
using \dref{zjscz2.5297x9630111kkhhffrreerr}, we derive from $D(A^\gamma) \hookrightarrow W^{1,p}(\Omega;\mathbb{R}^2)$ (see \cite{GHenryHenry4441215})  that \dref{zjscz2.5297x963011dkllldfff1kkhh} holds.

{\bf Step 6. The boundedness of $\|c_{\varepsilon}(\cdot, t)\|_{W^{1,\infty}(\Omega)}$ for all  $t\in (0, T_{max,\varepsilon})$}

In order to prove this, for any $\sigma\in (0, T_{max,\varepsilon})$ and $\sigma<1,$ it is thus sufficient to derive a bound
$$\|c_{\varepsilon}(\cdot, t)\|_{W^{1,\infty}(\Omega)}\leq  C_{30}~~ \mbox{for all}~~ t\in(\sigma,T_{max,\varepsilon})$$
and some positive constant $C_{30}$.
To achieve this,
we use well-known smoothing properties of the Neumann heat semigroup $(e^{t\Delta})_{t\geq0}$ in
$\Omega$, as stated e.g. in Lemma 1.3 of \cite{Winkler792} in a version covering the present situation, to see that there exists
$C_{31} > 0$ such that
%Let \begin{equation}
%\tilde{q}_0=\left\{\begin{array}{ll}
%\frac{2}{2-m}~~\mbox{if}~~
%N=2,\\
%\frac{25}{8}~~\mbox{if}~~
%N=2
% \end{array}\right.\label{3.1dfgghjjj0gghhjuulooll}
%\end{equation}
% and pick  $\theta\in(\frac{1}{2}+\frac{N}{2\tilde{q}_0},1),$ %where $q<\frac{2N}{(N-2)_+}.$
% then the domain of the fractional power $D((-\Delta + 1)^\theta)\hookrightarrow W^{1,\infty}(\Omega)$. Hence, again using the $L^p$-$L^q$ estimates associated heat semigroup,
\begin{equation}
\begin{array}{rl}
&\| c_\varepsilon(\cdot, t)\|_{W^{1,\infty}(\Omega)}\\
\leq&\disp{C_{30}\{1+\frac{1}{t-\tau}\}e^{\lambda(t-\tau)}\| c_\varepsilon(\cdot, \tau)\|_{L^{2}(\Omega)}+C_{30}\int_{\tau}^t\{1+({t-\tau})^{-\frac{1}{2}-\frac{1}{\bar{q}_0}}\}e^{\lambda(t-\tau)}\|n_\varepsilon(\cdot, \tau)\|_{L^{\bar{q}_0}(\Omega)}ds}\\
&\disp{+C_{30}\int_{\tau}^t\{1+({t-\tau})^{-\frac{1}{2}-\frac{1}{\bar{q}_0}}\}e^{\lambda(t-s)}\|u_\varepsilon(\cdot, s)\nabla c_\varepsilon(\cdot, s)\|_{L^{\bar{q}_0}(\Omega)}ds}\\
\leq&\disp{C_{31}+C_{30}\int_{\tau}^t\{1+({t-\tau})^{-\frac{1}{2}-\frac{1}{\bar{q}_0}}\}e^{\lambda(t-s)}\|u_\varepsilon(\cdot, s)\|_{L^{\infty}(\Omega)}\|\nabla c_\varepsilon(\cdot, s)\|_{L^{\bar{q}_0}(\Omega)}ds}\\
\leq&\disp{C_{32}~~ \mbox{for all}~~ t\in(\sigma,T_{max,\varepsilon}),}\\
%\leq&\disp{\|A^\gamma
%e^{-tA}u_0\|_{L^2(\Omega)}+C_1|\kappa|\int_{0}^t(t-\tau)^{-\gamma}e^{-\lambda(t-\tau)}\|(Yu(\cdot,\tau) \cdot \nabla)u(\cdot,\tau)\|_{L^2(\Omega)}d\tau}\\
% &+\disp{
% C_1\int_{0}^t(t-\tau)^{-\gamma}e^{-\lambda(t-\tau)}[\|n(\cdot,\tau)\nabla\phi\|_{L^2(\Omega)}+\|g(\cdot,\tau)\|_{L^2(\Omega)}]d\tau}\\
\end{array}
\label{zjccffgssbhjcvvvbscz2.5297x96301ku}
\end{equation}
%and
%\begin{equation}
%\begin{array}{rl}
%&\| w_\varepsilon(\cdot, t)\|_{W^{1,\infty}(\Omega)}\\
%\leq&\disp{C_{59}\|(-\Delta+1)^\theta w_{\varepsilon}(\cdot, t)\|_{L^{\tilde{q}_0}(\Omega)}}\\
%\leq&\disp{C_{60}t^{-\theta}e^{-\lambda t}\|w_0\|_{L^{\tilde{q}_0}(\Omega)}+C_{60}\int_{0}^t(t-s)^{-\theta}e^{-\lambda(t-s)}
%\|(n_{\varepsilon}-u_{\varepsilon} \cdot \nabla w_{\varepsilon})(\cdot,s)\|_{L^{\tilde{q}_0}(\Omega)}ds}\\
%\leq&\disp{C_{61}+C_{61}\int_{0}^t(t-s)^{-\theta}e^{-\lambda(t-s)}[\|n_{\varepsilon}(\cdot,s)\|_{L^{\tilde{q}_0}(\Omega)}+\|u_{\varepsilon}(\cdot,s)\|_{L^\infty(\Omega)}
%\|\nabla w_{\varepsilon}(\cdot,s)\|_{L^{\tilde{q}_0}(\Omega)}]ds}\\
%\leq&\disp{C_{62}~~ \mbox{for all}~~ t\in(\tau,T_{max,\varepsilon})}\\
%%\leq&\disp{\|A^\gamma
%%e^{-tA}u_0\|_{L^2(\Omega)}+C_1|\kappa|\int_{0}^t(t-\tau)^{-\gamma}e^{-\lambda(t-\tau)}\|(Yu(\cdot,\tau) \cdot \nabla)u(\cdot,\tau)\|_{L^2(\Omega)}d\tau}\\
%% &+\disp{
%% C_1\int_{0}^t(t-\tau)^{-\gamma}e^{-\lambda(t-\tau)}[\|n(\cdot,\tau)\nabla\phi\|_{L^2(\Omega)}+\|g(\cdot,\tau)\|_{L^2(\Omega)}]d\tau}\\
%\end{array}
%\label{zjccffgbhjcvvvddfsssbscz2.5297x96301ku}
%\end{equation}
where $\bar{q}_0=\frac{2}{1-\alpha}$.
%$\tau\in(0,T_{max,\varepsilon})$,
where  we have used  \dref{ccvvx1.731426677gg} as well as  \dref{cz2.5jkkcvddffgggghhdfffjjkvvhjkfffffkhhgll} and \dref{cz2.5jkkcvvvhjdsdfffffdkfffffkhhgll}.
 The proof of Lemma \ref{lemma45630hhuujj} is completed.
\end{proof}
By virtue of \dref{1.163072x} and Lemma \ref{lemma45630hhuujj}, the local-in-time solution can be extended to the global-intime
solution.
\begin{proposition}\label{lemma45630hhuujjtydfrjj} Let
$(n_\varepsilon, c_\varepsilon, u_\varepsilon, P_\varepsilon)_{\varepsilon\in(0,1)}$
 be classical solutions of \dref{1.1fghyuisda} constructed in Lemma
\ref{lemma70} on $[0, T_{max,\varepsilon})$.  Then the solution is global on $[0,\infty)$. Moreover, one can find %$(\varepsilon_j)_{j\in \mathbb{N}} \subset(0, 1)$
% such that $\varepsilon_j \rightarrow0$ as
%$j \rightarrow\infty$ and that
%Then there exists
$C > 0$
independent of $\varepsilon\in(0, 1)$ such that
\begin{equation}
\|n_\varepsilon(\cdot,t)\|_{L^\infty(\Omega)}  \leq C ~~\mbox{for all}~~ t\in(0,\infty)
\label{zjscz2.5297x9630111kkuu}
\end{equation}
as well as
\begin{equation}
\|c_\varepsilon(\cdot,t)\|_{W^{1,\infty}(\Omega)}  \leq C ~~\mbox{for all}~~ t\in(0,\infty)
\label{zjscz2.5297x9630111kkhhii}
\end{equation}
and
\begin{equation}
\|u_\varepsilon(\cdot,t)\|_{L^{\infty}(\Omega)}  \leq C ~~\mbox{for all}~~ t\in(0,\infty).
\label{zjscz2.5297x9630111kkhhffrroo}
\end{equation}
Moreover, for all  $\gamma\in ( \frac{1}{2}, 1)$,
we also have
\begin{equation}
\|A^\gamma u_\varepsilon(\cdot,t)\|_{L^{2}(\Omega)}  \leq C ~~\mbox{for all}~~ t\in(0,\infty).
\label{zjscz2.5297x9630111kkhhffrreerrpp}
\end{equation}
Furthermore,  for any $p > 1$, there exists $C(p) > 0$ satisfying
\begin{equation}
\|\nabla u_\varepsilon(\cdot,t)\|_{L^{p}(\Omega)}  \leq C(p)~~\mbox{for all}~~ t\in(0,\infty).
\label{zjscz2.5297x963011dkllddfgggldfff1kkhh}
\end{equation}
%Furthermore, for all $p > 1$, there exists $C(p) > 0$ satisfying
\end{proposition}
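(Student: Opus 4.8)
The plan is to obtain globality as an immediate consequence of the uniform bounds already secured in Lemma~\ref{lemma45630hhuujj}, combined with the extensibility criterion \dref{1.163072x} of Lemma~\ref{lemma70}, and then to simply reread those bounds over the now-unbounded time interval.

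First I would fix $\varepsilon\in(0,1)$ and argue by contradiction, assuming $T_{max,\varepsilon}<\infty$. Fixing some $\gamma\in(\frac12,1)$ and collecting \dref{zjscz2.5297x9630111kk}, \dref{zjscz2.5297x9630111kkhh} and \dref{zjscz2.5297x9630111kkhhffrreerr} from Lemma~\ref{lemma45630hhuujj}, there is a finite constant $C$, independent of $\varepsilon$, such that
$$\|n_\varepsilon(\cdot,t)\|_{L^\infty(\Omega)}+\|c_\varepsilon(\cdot,t)\|_{W^{1,\infty}(\Omega)}+\|A^\gamma u_\varepsilon(\cdot,t)\|_{L^2(\Omega)}\leq C\quad\mbox{for all}~~t\in(0,T_{max,\varepsilon}).$$
In particular the left-hand side stays bounded as $t\nearrow T_{max,\varepsilon}$, so that the corresponding $\limsup$ is at most $C<\infty$. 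This flatly contradicts \dref{1.163072x}, which would force that $\limsup$ to equal $+\infty$ whenever $T_{max,\varepsilon}<\infty$. Hence $T_{max,\varepsilon}=\infty$ for every $\varepsilon\in(0,1)$, which is precisely the claimed global solvability on $[0,\infty)$.

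Once $T_{max,\varepsilon}=\infty$ is established, the estimates \dref{zjscz2.5297x9630111kkuu}--\dref{zjscz2.5297x963011dkllddfgggldfff1kkhh} require no new analysis: each of \dref{zjscz2.5297x9630111kk}, \dref{zjscz2.5297x9630111kkhh}, \dref{zjscz2.5297x9630111kkhhffrr}, \dref{zjscz2.5297x9630111kkhhffrreerr} and \dref{zjscz2.5297x963011dkllldfff1kkhh} of Lemma~\ref{lemma45630hhuujj} now holds for all $t\in(0,\infty)$ with the same $\varepsilon$-independent constant. In particular the bound on $\|\nabla u_\varepsilon\|_{L^p(\Omega)}$ for arbitrary $p>1$ in \dref{zjscz2.5297x963011dkllddfgggldfff1kkhh} is inherited directly from Step~5 of the proof of Lemma~\ref{lemma45630hhuujj}, so nothing further is needed.

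The single point that must be verified for this scheme to be non-circular, which I regard as the only subtlety rather than a genuine obstacle, is that the constants produced by Lemma~\ref{lemma45630hhuujj} do not secretly depend on $T_{max,\varepsilon}$. This is guaranteed by construction: every time-integral estimate in that proof is closed through the comparison Lemma~\ref{lemma630}, whose conclusion depends only on the coefficients $A$, $B$ and the step length $\sigma$ and not on the terminal time, and the subsequent variation-of-constants and Stokes-semigroup arguments in Steps~2--6 preserve this uniform-in-time character through the exponentially decaying kernels. Consequently the bounds are uniform in both $t$ and $\varepsilon$, which is exactly the form required here for globality and later for the passage to the limit $\varepsilon=\varepsilon_j\searrow0$.
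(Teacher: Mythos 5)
Your proposal is correct and follows exactly the paper's (essentially one-line) argument: the uniform bounds of Lemma \ref{lemma45630hhuujj} on $(0,T_{max,\varepsilon})$ contradict the extensibility criterion \dref{1.163072x} unless $T_{max,\varepsilon}=\infty$, after which those same bounds are read on $(0,\infty)$. Your closing remark on uniformity is the right thing to check; note only that $\tau=\min\{1,\frac{1}{6}T_{max,\varepsilon}\}$ nominally depends on $T_{max,\varepsilon}$, but this is harmless since finiteness of the bounds suffices for the contradiction, and once $T_{max,\varepsilon}=\infty$ one has $\tau=1$ so the constants are indeed $\varepsilon$-independent.
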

%As a straightforward result of Lemma \ref{lemma45630hhuujjuu}, the following lemma gives uniform H\"{o}lder regularity
%properties of $c_\varepsilon,\nabla c_\varepsilon$ and $u_\varepsilon$.
% Here we only state the lemma, as for the detailed proof, readers can
%refer to the arguments of Lemma 3.18 and Lemma 3.19 in \cite{Winkler11215}.

Again due to the regularity properties asserted by Lemma \ref{lemma45630hhuujj}, and due to the assumed
H\"{o}lder continuity of $n_0$, it follows from standard parabolic theory that $n_{\varepsilon}$ even satisfies estimates in
appropriate H\"{o}lder spaces:
\begin{lemma}\label{lemma45630hhuujjuuyy}
Let $\alpha>0$.
Then one can find $\mu\in(0, 1)$ such that for some $C > 0$
%
%
%Then one can find $(\varepsilon_j)_{j\in \mathbb{N}} \subset(0, 1)$
% such that $\varepsilon_j \rightarrow0$ as
%$j \rightarrow\infty$ and that
%\begin{equation}
%\|n_\varepsilon(\cdot,t)\|_{L^\infty(\Omega)}  \leq C ~~\mbox{for all}~~ t\in(0,\infty)
%\label{zjscz2.5297x9630111kkuuuu}
%\end{equation}
%and
\begin{equation}
\|c_\varepsilon(\cdot,t)\|_{C^{\mu,\frac{\mu}{2}}(\Omega\times[t,t+1])}  \leq C ~~\mbox{for all}~~ t\in(0,\infty)
\label{zjscz2.5297x9630111kkhhiioo}
\end{equation}
%\begin{equation}
%\|w_\varepsilon(\cdot,t)\|_{C^{\mu,\frac{\mu}{2}}(\Omega\times[t,t+1])}  \leq C ~~\mbox{for all}~~ t\in(0,\infty)
%\label{22222zjscz2.5297x9630111kssddkhhiioo}
%\end{equation}
as well as
\begin{equation}
\|u_\varepsilon(\cdot,t)\|_{C^{\mu,\frac{\mu}{2}}(\Omega\times[t,t+1])} \leq C ~~\mbox{for all}~~ t\in(0,\infty),
\label{zjscz2.5297x9630111kkhhffrroojj}
\end{equation}
and
\begin{equation}
\|n_\varepsilon(\cdot,t)\|_{C^{\mu,\frac{\mu}{2}}(\Omega\times[t,t+1])} \leq C ~~\mbox{for all}~~ t\in(0,\infty),
\label{zjscz2.5297x96dfgg30111kkhhffrroojj}
\end{equation}
Moreover, for any $\tau> 0$,
 there exists $C(\tau) > 0$ fulfilling
\begin{equation}
\|\nabla c_\varepsilon(\cdot,t)\|_{C^{\mu,\frac{\mu}{2}}(\Omega\times[t,t+1])} \leq C(\tau) ~~\mbox{for all}~~ t\in(\tau,\infty).
\label{1111zjscz2.5297x9630111kkhhffrreerrpphh}
\end{equation}
%and
%\begin{equation}
%\|\nabla w_\varepsilon(\cdot,t)\|_{C^{\mu,\frac{\mu}{2}}(\Omega\times[t,t+1])} \leq C ~~\mbox{for all}~~ t\in(\tau,\infty).
%\label{zjscz2.5297x9630111kkhhffrreerrpphh}
%\end{equation}
\end{lemma}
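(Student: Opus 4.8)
The plan is to regard each of the three evolution equations in \dref{1.1fghyuisda} as a linear parabolic problem whose coefficients and inhomogeneities are already controlled, uniformly in $\varepsilon$ and in $t$, by the $L^\infty$-type bounds collected in Proposition \ref{lemma45630hhuujjtydfrjj}, and then to invoke parabolic H\"older regularity theory on each time slab $\Omega\times[t,t+1]$. Since none of the constants in \dref{zjscz2.5297x9630111kkuu}--\dref{zjscz2.5297x963011dkllddfgggldfff1kkhh} depends on $t$, the resulting H\"older seminorms will automatically be bounded uniformly in $t\in(0,\infty)$, which is precisely the form of \dref{zjscz2.5297x9630111kkhhiioo}--\dref{zjscz2.5297x96dfgg30111kkhhffrroojj}.

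I would first dispose of the two inhomogeneous-heat-type equations. For $u_\varepsilon$, the projected equation reads $u_{\varepsilon t}+Au_\varepsilon=\mathcal{P}[-\kappa(Y_\varepsilon u_\varepsilon\cdot\nabla)u_\varepsilon+n_\varepsilon\nabla\phi]$, and \dref{zjscz2.5297x9630111kkuu}, \dref{zjscz2.5297x9630111kkhhffrroo} together with \dref{zjscz2.5297x963011dkllddfgggldfff1kkhh} bound its right-hand side in $L^p(\Omega)$ uniformly for each $p>1$; combining the uniform control of $\|A^\gamma u_\varepsilon\|_{L^2(\Omega)}$ from \dref{zjscz2.5297x9630111kkhhffrreerrpp} (for $\gamma$ close to $1$, with $D(A^\gamma)\hookrightarrow C^\mu(\bar\Omega)$) with the H\"older-in-time smoothing of the Stokes semigroup yields \dref{zjscz2.5297x9630111kkhhffrroojj}. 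For $c_\varepsilon$, I would write the second equation as $c_{\varepsilon t}-\Delta c_\varepsilon=g_\varepsilon$ with $g_\varepsilon:=-u_\varepsilon\cdot\nabla c_\varepsilon-c_\varepsilon+n_\varepsilon$, which by \dref{zjscz2.5297x9630111kkhhii} and \dref{zjscz2.5297x9630111kkhhffrroo} is bounded in $L^\infty(\Omega\times(0,\infty))$ uniformly in $\varepsilon$; the standard interior-and-boundary H\"older estimate for the heat equation then gives \dref{zjscz2.5297x9630111kkhhiioo}.

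The genuinely delicate case is $n_\varepsilon$, whose equation, after using $\nabla\cdot u_\varepsilon=0$, takes the divergence form $n_{\varepsilon t}-\Delta n_\varepsilon=-\nabla\cdot F_\varepsilon$ with flux $F_\varepsilon:=n_\varepsilon S_\varepsilon(x,n_\varepsilon,c_\varepsilon)\nabla c_\varepsilon+n_\varepsilon u_\varepsilon$. No pointwise control of second derivatives is available, so Schauder theory does not apply; instead I would appeal to the De Giorgi--Nash--Moser theory for divergence-form parabolic equations with bounded measurable data. The crucial observation is that $F_\varepsilon$ is bounded in $L^\infty$ uniformly in $\varepsilon$, since \dref{x1.73142vghf48gg} forces $|S_\varepsilon|\le C_S$ while \dref{zjscz2.5297x9630111kkuu}, \dref{zjscz2.5297x9630111kkhhii} and \dref{zjscz2.5297x9630111kkhhffrroo} bound $n_\varepsilon$, $\nabla c_\varepsilon$ and $u_\varepsilon$; the associated oscillation estimate then produces an $\varepsilon$-independent exponent $\mu\in(0,1)$ and an $\varepsilon$-independent bound for the H\"older seminorm of $n_\varepsilon$ on each slab, i.e.\ \dref{zjscz2.5297x96dfgg30111kkhhffrroojj}. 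After shrinking $\mu$ if necessary, one and the same exponent can be used in all three estimates. I expect this divergence-form H\"older estimate, together with the bookkeeping required to keep every constant independent of $\varepsilon$, to be the main obstacle.

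Finally, the gradient bound \dref{1111zjscz2.5297x9630111kkhhffrreerrpphh} follows from parabolic $L^p$-maximal regularity applied to $c_{\varepsilon t}-\Delta c_\varepsilon=g_\varepsilon$: the forcing $g_\varepsilon$ is bounded in $L^\infty(\Omega\times(0,\infty))\subset L^p$ for every $p$, so $c_\varepsilon$ lies in the parabolic Sobolev space $W^{2,1}_p(\Omega\times[t,t+1])$ uniformly in $\varepsilon$, and choosing $p>4=N+2$ together with the embedding $W^{2,1}_p\hookrightarrow C^{1+\mu,\frac{1+\mu}{2}}$ controls $\nabla c_\varepsilon$ in $C^{\mu,\frac{\mu}{2}}$. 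The restriction $t>\tau$ simply reflects that this higher-order smoothing estimate must be started from a positive time in order to absorb the initial layer, the constant $C(\tau)$ deteriorating as $\tau\searrow0$.
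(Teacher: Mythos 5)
Your proposal is correct and follows essentially the same route as the paper's proof: everything is driven by the uniform bounds of Proposition \ref{lemma45630hhuujjtydfrjj}, with bounded-forcing parabolic regularity for $c_\varepsilon$ (and maximal $L^p$ regularity for $\nabla c_\varepsilon$ on slabs starting at positive time), semigroup estimates for $u_\varepsilon$, and divergence-form H\"older (De Giorgi--Nash--Moser type) theory for the delicate $n_\varepsilon$ equation. The only cosmetic difference is in the $n_\varepsilon$ step: the paper keeps the convection as a separate lower-order term $b=-u_\varepsilon\cdot\nabla n_\varepsilon$ and verifies the structure conditions of the Porzio--Vespri theorem, whereas you use $\nabla\cdot u_\varepsilon=0$ to fold $u_\varepsilon\cdot\nabla n_\varepsilon=\nabla\cdot(n_\varepsilon u_\varepsilon)$ into the bounded flux $F_\varepsilon$ and invoke the linear divergence-form oscillation estimate --- the two formulations are interchangeable given the available $L^\infty$ bounds.
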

\begin{proof}
Firstly, let $g_\varepsilon(x, t) := -c_\varepsilon+n_{\varepsilon}-u_{\varepsilon}\cdot\nabla c_{\varepsilon}$. % and
% $\tilde{g}_\varepsilon(x, t) := -w_\varepsilon+n_{\varepsilon}-u_{\varepsilon}\cdot\nabla w_{\varepsilon}$
Then by Proposition \ref{lemma45630hhuujjtydfrjj}, we derive that
$ g_\varepsilon$ %and $ \tilde{g}_\varepsilon$
is bounded in $L^{\infty} (\Omega\times(0, T))$ for any  $\varepsilon\in(0,1)$,  we may invoke the standard parabolic regularity theory  to the second
equation  of \dref{1.1fghyuisda} and  infer that %\dref{22222zjscz2.5297x9630111kssddkhhiioo},
\dref{zjscz2.5297x9630111kkhhiioo} and \dref{1111zjscz2.5297x9630111kkhhffrreerrpphh} holds.
With the help of the Proposition \ref{lemma45630hhuujjtydfrjj} again, performing standard semigroup estimation techniques to the third  equation of \dref{1.1fghyuisda}, we can  get \dref{zjscz2.5297x9630111kkhhffrroojj}.
In order to  derive \dref{zjscz2.5297x96dfgg30111kkhhffrroojj}, we need to rewrite the first equation of \dref{1.1fghyuisda} as
$$n_{\varepsilon t}=\nabla\cdot a(x,t,n_{\varepsilon},\nabla n_{\varepsilon}) +b(x,t,\nabla n_{\varepsilon})$$
with boundary data $a(x,t,n_{\varepsilon},\nabla n_{\varepsilon}) \cdot\nu= 0$ on $\partial\Omega$, where $a(x,t,n_{\varepsilon},\nabla n_{\varepsilon})=\nabla n_{\varepsilon}-n_{\varepsilon}S_\varepsilon(x, n_{\varepsilon}, c_{\varepsilon})\nabla c_{\varepsilon}$  and $b(x,t,\nabla n_{\varepsilon})=-u_{\varepsilon}\cdot\nabla n_{\varepsilon}$, $(x, t) \in\Omega\times(0,\infty).$
Let $p := \nabla n_{\varepsilon}$. From the Young inequality and the boundedness of $n_{\varepsilon}$ and $u_{\varepsilon}$ already
obtained we can easily conclude that for any $T > 0$, there exist positive constants
$C_i = C_i(T)$ ($i = 1, 2, 3$) such that
$$a(x,t,n_{\varepsilon},p)\cdot p\geq\frac{1}{2}p^2-C_1|\nabla c_{\varepsilon}|^2$$
as well as
$$|a(x,t,n_{\varepsilon},p)|\leq|p|+C_2|\nabla c_{\varepsilon}|$$
and
$$|b(x,t,p)|\leq C_3+\frac{1}{2}p^2.$$
According to (3.5), $\nabla c_{\varepsilon}$ belongs to $L^\infty((0,\infty);L^2(\Omega))$. Consequently, by Theorem 1.3 of  \cite{Porzio555drr}, \dref{zjscz2.5297x96dfgg30111kkhhffrroojj} holds.
\end{proof}
%Combining Lemma  \ref{lemma70} and  Lemma \ref{lemma45630hhuujj}, we readily  prove Theorem \ref{theoremddffggg3}.
%{\bf Proof of Theorem \ref{theoremddffggg3}}:  In view of Lemma \ref{lemma45630hhuujj}, $\|u(\cdot, t)\|_{L^\infty(\Omega)},\|c(\cdot, t)\|_{W^{1,\infty}(\Omega)}$ and $\|A^\gamma u(\cdot, t)\|_{L^{2}(\Omega)}$ are bounded uniformly with respect to $t\in(0,T_{max,\varepsilon})$. Thereupon the assertion of Theorem \ref{theoremddffggg3} is immediately obtained from Lemma \ref{lemma70}.
%% Hence the statement of global existence and boundedness of classical solutions to  \dref{1.1} is a straightforward consequence of Lemma \ref{lemma70}.
%
%
%
%
%
%
%
%
%
%
%
%
%
%
%
%
%
%
%
%
%Our first result on global existence and boundedness thereby becomes a straightforward consequence
%of Lemma \ref{lemma70}.
%{\bf Passing to the limit}.

\section{Estimates in $C^{2+¦È,1+\frac{\theta}{2}}$ for $u_\varepsilon$, $c_\varepsilon$  and $n_\varepsilon$}

In order to use the Aubin-Lions  Lemma (see Simon \cite{Simon}), we will need at least some regularity of the
time derivative of bounded solutions. The required estimates are very close to those in \cite{Wang23421215} (see also Winkler \cite{Winkler11215,Winkler51215}. We will state the results here and only give a
sketch of the proofs.

\begin{lemma}\label{lemma4dd5630hhuujjuuyy}
Let $\alpha>0$. There exists $\theta\in(0, 1)$ with the property that one can find $C > 0$ such that for any
$\varepsilon\in (0, 1)$ and $ t_0 > 0$,
%Let $\alpha>0$ and $\tau > 0$.
%Then one can find $\mu\in(0, 1)$ such that for some $C > 0$
%%
%
%Then one can find $(\varepsilon_j)_{j\in \mathbb{N}} \subset(0, 1)$
% such that $\varepsilon_j \rightarrow0$ as
%$j \rightarrow\infty$ and that
%\begin{equation}
%\|n_\varepsilon(\cdot,t)\|_{L^\infty(\Omega)}  \leq C ~~\mbox{for all}~~ t\in(0,\infty)
%\label{zjscz2.5297x9630111kkuuuu}
%\end{equation}
%and
%\begin{equation}
%\|c_\varepsilon(\cdot,t)\|_{C^{\theta,\frac{\mu}{2}}(\Omega\times[t,t+1])}  \leq C ~~\mbox{for all}~~ t\in[0,\infty),
%\label{zjscz2.5297x96ddd30111kkhhiioo}
%\end{equation}
%and that for arbitrary $\tau > 0$ and $\varepsilon\in(0,1),$ one can pick $C(\tau) > 0$ fulfilling
%%\begin{equation}
%%\|w_\varepsilon(\cdot,t)\|_{C^{\mu,\frac{\mu}{2}}(\Omega\times[t,t+1])}  \leq C ~~\mbox{for all}~~ t\in(0,\infty)
%%\label{22222zjscz2.5297x9630111kssddkhhiioo}
%%\end{equation}
%as well as
\begin{equation}
\|u_\varepsilon(\cdot,t)\|_{C^{2+\theta,1+\frac{\theta}{2}}(\bar{\Omega}\times[t,t+1])} \leq C ~~\mbox{for all}~~ t\geq t_0,
\label{zjscz2.5297x9dddd630111kkhhffrroojj}
\end{equation}
%as well as
\begin{equation}
\|c_\varepsilon(\cdot,t)\|_{C^{2+\theta,1+\frac{\theta}{2}}(\bar{\Omega}\times[t,t+1])} \leq C ~~\mbox{for all}~~ t\geq t_0
\label{zjscz2.5297x9dddffgddd630111kkhhffrroojj}
\end{equation}
as well as
\begin{equation}
\|u_\varepsilon(\cdot,t)\|_{L^{p}((t,t+1);W^{2,p}(\Omega)} +\|u_{\varepsilon t}(\cdot,t)\|_{L^{p}(\Omega\times(t,t+1))}\leq C ~~\mbox{for all}~~ t\geq t_0~~\mbox{and}~~p>1
\label{zjscz2.5297x9dddjkkkkddggkd630111kkhhffrroojj}
\end{equation}
and
\begin{equation}
\|c_\varepsilon(\cdot,t)\|_{L^{p}((t,t+1);W^{2,p}(\Omega)} +\|c_{\varepsilon t}(\cdot,t)\|_{L^{p}(\Omega\times(t,t+1))}\leq C ~~\mbox{for all}~~ t\geq t_0~~\mbox{and}~~p>1.
\label{zjscz2.5297x9dddjkkkkddggkd630111fghhkkhhffrroojj}
\end{equation}
%and
%\begin{equation}
%\|n_\varepsilon(\cdot,t)\|_{L^{2}((t,t+1);W^{2,p}(\Omega)} +\|n_{\varepsilon t}(\cdot,t)\|_{L^{2}((t,t+1);(W^{1,2}(\Omega))^*)}\leq C ~~\mbox{for all}~~ t\geq\tau.
%\label{zjscz2.5297x9dddjddgggkkkkddggkd630111fghhkkhhffrroojj}
%\end{equation}
%
%such that for any $\tau> 0$,
% there exists $C(\tau) > 0$ fulfilling
%\begin{equation}
%\|\nabla c_\varepsilon(\cdot,t)\|_{C^{\mu,\frac{\mu}{2}}(\Omega\times[t,t+1])} \leq C ~~\mbox{for all}~~ t\in(\tau,\infty).
%\label{1111zjscz2.5297x96301ddddd11kkhhffrreerrpphh}
%\end{equation}
%and
%\begin{equation}
%\|\nabla w_\varepsilon(\cdot,t)\|_{C^{\mu,\frac{\mu}{2}}(\Omega\times[t,t+1])} \leq C ~~\mbox{for all}~~ t\in(\tau,\infty).
%\label{zjscz2.5297x9630111kkhhffrreerrpphh}
%\end{equation}
\end{lemma}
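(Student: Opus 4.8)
The plan is to exploit the uniform $L^\infty$ and Hölder bounds already collected in Proposition \ref{lemma45630hhuujjtydfrjj} and Lemma \ref{lemma45630hhuujjuuyy}, and then to upgrade regularity by standard parabolic and Stokes theory applied separately to the three equations of \dref{1.1fghyuisda}, localized on the unit time slabs $[t,t+1]$ so that all constants remain independent of both $t$ and $\varepsilon$. Throughout, $t$-uniformity comes from working on these fixed-length slabs and using the exponential decay of the heat and Stokes semigroups, while $\varepsilon$-uniformity is inherited from the already established bounds together with the contractivity $\|Y_\varepsilon w\|_{L^2(\Omega)}\leq\|w\|_{L^2(\Omega)}$.

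First I would treat the $c_\varepsilon$-equation, written as $c_{\varepsilon t}-\Delta c_\varepsilon=g_\varepsilon$ with $g_\varepsilon:=-c_\varepsilon+n_\varepsilon-u_\varepsilon\cdot\nabla c_\varepsilon$ and homogeneous Neumann data. By Lemma \ref{lemma45630hhuujjuuyy}, each of $c_\varepsilon$, $n_\varepsilon$, $u_\varepsilon$ and (for $t$ bounded away from $0$) $\nabla c_\varepsilon$ is bounded in $C^{\mu,\frac{\mu}{2}}$ uniformly in $\varepsilon$, so the product $u_\varepsilon\cdot\nabla c_\varepsilon$, and hence $g_\varepsilon$, lies in $C^{\mu,\frac{\mu}{2}}$ with a uniform bound. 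Parabolic Schauder estimates then yield \dref{zjscz2.5297x9dddffgddd630111kkhhffrroojj}, whereas the boundedness of $g_\varepsilon$ in $L^\infty\subset L^p$ combined with maximal Sobolev regularity for the Neumann heat operator gives \dref{zjscz2.5297x9dddjkkkkddggkd630111fghhkkhhffrroojj}.

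For $u_\varepsilon$ the argument is more delicate because of the convection term. I would first apply maximal $L^p$ regularity for the Stokes operator to the projected equation $u_{\varepsilon t}+Au_\varepsilon=\mathcal{P}[-\kappa(Y_\varepsilon u_\varepsilon\cdot\nabla)u_\varepsilon+n_\varepsilon\nabla\phi]$. Since $\|n_\varepsilon\|_{L^\infty(\Omega)}$, $\|u_\varepsilon\|_{L^\infty(\Omega)}$ and $\|\nabla u_\varepsilon\|_{L^p(\Omega)}$ are all bounded uniformly in $\varepsilon$ by Proposition \ref{lemma45630hhuujjtydfrjj}, and $Y_\varepsilon$ is a contraction, the forcing term is bounded in $L^p(\Omega\times(t,t+1))$ for every $p>1$; maximal regularity then produces \dref{zjscz2.5297x9dddjkkkkddggkd630111kkhhffrroojj}. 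Choosing $p$ large and invoking the parabolic embedding of $W^{2,1}_p$ into $C^{1+\nu,\frac{1+\nu}{2}}$ upgrades $\nabla u_\varepsilon$ to a uniformly bounded $C^{\nu,\frac{\nu}{2}}$-function. The convection term is then Hölder continuous, and since $\phi\in W^{2,\infty}(\Omega)$ makes $\nabla\phi$ Lipschitz so that $n_\varepsilon\nabla\phi\in C^{\mu,\frac{\mu}{2}}$, a further application of Schauder theory to the Stokes system delivers \dref{zjscz2.5297x9dddd630111kkhhffrroojj}.

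The step I expect to be the main obstacle is precisely this bootstrap for $u_\varepsilon$: the advection $(Y_\varepsilon u_\varepsilon\cdot\nabla)u_\varepsilon$ is initially controlled only in $L^p$, so the Hölder estimate \dref{zjscz2.5297x9dddd630111kkhhffrroojj} cannot be obtained directly and must be reached through the intermediate maximal-regularity bound \dref{zjscz2.5297x9dddjkkkkddggkd630111kkhhffrroojj}. Beyond this, the only points requiring care are keeping every constant independent of $\varepsilon$ (through the uniformity in Proposition \ref{lemma45630hhuujjtydfrjj} and the contractivity of $Y_\varepsilon$) and remembering that the $\nabla c_\varepsilon$-Hölder bound, and with it the Schauder estimate for $c_\varepsilon$, is available only for $t$ bounded away from $0$, which is why the conclusions are stated for $t\geq t_0>0$.
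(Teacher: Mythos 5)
Your strategy is essentially the paper's: bound the forcing $\mathcal{P}\left[-\kappa(Y_\varepsilon u_\varepsilon\cdot\nabla)u_\varepsilon+n_\varepsilon\nabla\phi\right]$ uniformly in $L^p$ via Proposition \ref{lemma45630hhuujjtydfrjj}, apply maximal Sobolev regularity for the Stokes evolution equation to obtain \dref{zjscz2.5297x9dddjkkkkddggkd630111kkhhffrroojj}, upgrade to H\"{o}lder regularity by the parabolic (Amann-type) embedding, and conclude with Schauder estimates for the Stokes problem; your treatment of $c_\varepsilon$ (direct Schauder from the H\"{o}lder bounds of Lemma \ref{lemma45630hhuujjuuyy}, plus maximal regularity for the $L^p$ statement) is only a harmless reordering of the paper's ``likewise'' step.

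There is, however, one step you pass over that the paper treats explicitly and that does not follow from what you prove. For the final Schauder application you need the convection term $(Y_\varepsilon u_\varepsilon\cdot\nabla)u_\varepsilon$ to be H\"{o}lder continuous \emph{uniformly in} $\varepsilon$, and your parabolic embedding only makes $u_\varepsilon$ and $\nabla u_\varepsilon$ H\"{o}lder continuous --- not $Y_\varepsilon u_\varepsilon$. The Yosida operator $Y_\varepsilon=(1+\varepsilon A)^{-1}$ involves the Stokes resolvent and is not obviously bounded on H\"{o}lder spaces with $\varepsilon$-independent constants, so the assertion ``the convection term is then H\"{o}lder continuous'' has a gap. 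The paper closes it in \dref{cz2.571hhhhh5122dffgggdfddfffghhhhffgg2ccvvhddffddfccffgghhhhvvhjjjkkhhggjjllll}: since $Y_\varepsilon$ commutes with $A^{\gamma}$ and is a contraction on $L^2_\sigma(\Omega)$, the uniform bound \dref{zjscz2.5297x9630111kkhhffrreerrpp} on $\|A^\gamma u_\varepsilon\|_{L^2(\Omega)}$ passes to $Y_\varepsilon u_\varepsilon$, and the embedding of $D(A^\gamma)$ into a spatial H\"{o}lder space (for $\gamma$ close to $1$ in two dimensions) then yields the required $\varepsilon$-uniform H\"{o}lder bound on $Y_\varepsilon u_\varepsilon$; its regularity in time can be recovered from $\partial_t Y_\varepsilon u_\varepsilon=Y_\varepsilon u_{\varepsilon t}$ together with your maximal-regularity bound. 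With this ingredient inserted, your argument is complete and coincides with the paper's proof.
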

\begin{proof}
Given $p > 2$, by using Proposition \ref{lemma45630hhuujjtydfrjj} along with the boundedness of $\nabla\phi$, we can find $C_1 > 0$, $C_2>0$ and $C_3>0$
such that for all $\varepsilon\in (0, 1)$ we have
%\begin{equation}
%\|n_\varepsilon(\cdot,t)\nabla\phi\|_{L^\infty(\Omega)}  \leq C ~~\mbox{for all}~~ t>0.
%\label{zjscz2.5297x96301ddfg11kkuu}
%\end{equation}
%Moreover, according to the Cauchy-Schwarz inequality and Lemma 3.1, the outcome of Lemma 8.4
%ensures that with some positive constants $C_2,C_3$ and $C_4$ we have
\begin{equation}
\begin{array}{rl}
&\|(Y_{\varepsilon}u_{\varepsilon} \cdot \nabla)u_{\varepsilon}(\cdot,t)\|_{L^{p}(\Omega)}+ \|n_\varepsilon(\cdot,t)\nabla\phi\|_{L^{p}(\Omega)}\\
\leq& \|Y_{\varepsilon}u_{\varepsilon} \|_{L^{2p}(\Omega)}\|\nabla u_{\varepsilon}(\cdot,t)\|_{L^{2p}(\Omega)}+  \|n_\varepsilon(\cdot,t)\|_{L^{p}(\Omega)}\|\nabla\phi\|_{L^{\infty}(\Omega)}\\
\leq&  C_2\|\nabla u_{\varepsilon} (\cdot,t)\|_{L^{2}(\Omega)}\|\nabla u_{\varepsilon}(\cdot,t)\|_{L^{2p}(\Omega)}+ C_{1}\\
\leq& C_{3}~~~\mbox{for all}~~ t\geq t_0\\
\end{array}
\label{cz2.571hhhhh5122ddfddfffghhhhffgg2ccvvhddfccffgghhhhvvhjjjkkhhggjjllll}
\end{equation}
by using \dref{zjscz2.5297x963011dkllddfgggldfff1kkhh} and the Cauchy-Schwarz inequality.
Therefore, with the help of the maximal Sobolev regularity
estimates for the Stokes evolution equation, we derive that there exists a positive constanst $C_4$ which depends on $p$ and $ t_0$ such that
\begin{equation}
\|u_\varepsilon(\cdot,t)\|_{L^{p}((t,t+1);W^{2,p}(\Omega)} +\|u_{\varepsilon t}(\cdot,t)\|_{L^{p}(\Omega\times(t,t+1))}\leq C_4. ~~\mbox{for all}~~ t\geq t_0,
\label{zjscz2.5297x9dddjkkkkkd630111kkhhffrroojj}
\end{equation}
which in view of a known embedding result (\cite{AmannAmannmo1216}) implies that for all $ t_0 > 0$,
we can find $\theta_1\in (0, 1)$ and $C_5$ such  that
%for all ¦Ó > 0 there exists C2 > 0 fulfilling
\begin{equation}
\|u_\varepsilon(\cdot,t)\|_{C^{1+\theta_1,\theta_1}(\Omega\times[t,t+2])} \leq C_5 ~~\mbox{for all}~~ t\geq\frac{ t_0}{2}.
\label{zjscz2.5297x9ddddfggdd630111kkhhffrfffroojj}
\end{equation}
Moreover,  from Proposition \ref{lemma45630hhuujjtydfrjj}, we obtain $\theta_2 \in(0, 1)$ and $C_6 > 0$ satisfying
\begin{equation}
\begin{array}{rl}
\|Y_{\varepsilon} u_{\varepsilon}(\cdot,t)\|_{C^{\theta_2,\theta_2}(\Omega\times[t,t+2])} \leq C_6 ~~\mbox{for all}~~ t\geq\frac{ t_0}{2}.\\
\end{array}
\label{cz2.571hhhhh5122dffgggdfddfffghhhhffgg2ccvvhddffddfccffgghhhhvvhjjjkkhhggjjllll}
\end{equation}
Here we have used the Sobolev imbedding theorem. Therefore, collecting \dref{zjscz2.5297x9ddddfggdd630111kkhhffrfffroojj}
and \dref{cz2.571hhhhh5122dffgggdfddfffghhhhffgg2ccvvhddffddfccffgghhhhvvhjjjkkhhggjjllll}, one can
find $C_7 > 0$ and $\theta_3$ satisfying
\begin{equation}
\begin{array}{rl}
\|(Y_{\varepsilon}u_{\varepsilon} \cdot \nabla)u_{\varepsilon}(\cdot,t)\|_{C^{\theta_3,\theta_3}(\Omega\times[t,t+2])}
\leq& C_{7}~~~\mbox{for all}~~ t\geq\frac{ t_0}{2},\\
\end{array}
\label{cz2.571hhhhhkkll5122ddfddfffghhhhffggdfgg2ccvvhddfccffgghhhhvvhjjjkkhhggjjllll}
\end{equation}
which together with \dref{zjscz2.5297x96dfgg30111kkhhffrroojj} yields \dref{zjscz2.5297x9dddd630111kkhhffrroojj} according to classical Schauder estimates for the Stokes evolution
problem (\cite{Solonnikov556}). Likewise, we can claim that \dref{zjscz2.5297x9dddffgddd630111kkhhffrroojj} and \dref{zjscz2.5297x9dddjkkkkddggkd630111fghhkkhhffrroojj}  hold.
%Here
%we have used the embedding $D(A^\gamma)\hookrightarrow C^{\theta_2}(\bar{\Omega};\mathbb{R}^2)$ for
\end{proof}
To prepare our subsequent compactness properties of
$(n_\varepsilon, c_\varepsilon, u_\varepsilon, P_\varepsilon)$ by means of the Aubin-Lions lemma (See  Simon \cite{Simon}), we conclude
the following regularity property with respect to the time variable by Proposition \ref{lemma45630hhuujjtydfrjj}. The idea comes from Lemma 5.1 of \cite{Zhengsdsd6} and  Lemmas 3.22--3.23 of \cite{Winkler11215}.
%)
%
%employing almost exactly the same arguments as in the proof of Lemma 5.1 in \cite{Zhengsdsd6} (see also Lemmas 3.22--3.23 of \cite{Winkler11215}), and taking advantage of Proposition \ref{lemma45630hhuujjtydfrjj},

\begin{lemma}\label{lemmssddda45630hhuujjuuyytt}
Let $\alpha> 0$.
%
%
%
%
%
%Then one can find $\varepsilon\in(0, 1)$ such that for some $C > 0$
%%
%%
%%Then one can find $(\varepsilon_j)_{j\in \mathbb{N}} \subset(0, 1)$
%% such that $\varepsilon_j \rightarrow0$ as
%%$j \rightarrow\infty$ and that
%%\begin{equation}
%%\|n_\varepsilon(\cdot,t)\|_{L^\infty(\Omega)}  \leq C ~~\mbox{for all}~~ t\in(0,\infty)
%%\label{zjscz2.5297x9630111kkuuuu}
%%\end{equation}
%%and
%\begin{equation}
%\|\partial_tn_\varepsilon(\cdot,t)\|_{(W^{2,2}_0(\Omega))^*}  \leq C ~~\mbox{for all}~~ t\in(0,\infty).
%\label{zjscz2.5297x9630111kkhhiioott}
%\end{equation}
%Moreover,
%%and
%let $\varsigma> m$ and $\varsigma\geq 2(m - 1 + \alpha)$.
 Then
for all $t > 0$, there exists $C(t) > 0$ such that
\begin{equation}
\int_0^t\|n_{\varepsilon t}(\cdot,t)\|_{(W^{1,2}(\Omega))^*}^2dt  \leq C(t) ~~\mbox{for all}~~ t\in(0,\infty)~~\mbox{and}~~\varepsilon\in(0,1)
\label{zjscz2.5297x9630111kkhhiioott4}
\end{equation}
%Particularly,
%\begin{equation}
%\|n_\varepsilon(\cdot,t)-n_\varepsilon(\cdot,s)\|_{(W^{2,2}_0(\Omega))^*} \leq C|t-s| ~~\mbox{for all}~~ t\in[0,\infty),~~ s\in[0,\infty)~~\mbox{and}~~
%\varepsilon\in(0,1).
%\label{zjscz2.5297x9630111kkhhffrroojjtt}
%\end{equation}
%and such that for any $\tau> 0$
% there exists $C(\tau) > 0$ fulfilling
%\begin{equation}
%\|\nabla c_\varepsilon(\cdot,t)\|_{C^{\theta,\frac{\theta}{2}}(\Omega\times[t,t+1])} \leq C ~~\mbox{for all}~~ t\in(\tau,\infty).
%\label{zjscz2.5297x9630111kkhhffrreerrpphhtt}
%\end{equation}
\end{lemma}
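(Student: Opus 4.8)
The plan is to test the first equation of \dref{1.1fghyuisda} against an arbitrary $\psi\in W^{1,2}(\Omega)$ and thereby reduce the claim to a space--time $L^2$ bound for $\nabla n_\varepsilon$. First I would rewrite the $n_\varepsilon$-equation in divergence form
$$n_{\varepsilon t}=\Delta n_\varepsilon-\nabla\cdot\big(n_\varepsilon S_\varepsilon(x,n_\varepsilon,c_\varepsilon)\nabla c_\varepsilon\big)-\nabla\cdot(n_\varepsilon u_\varepsilon),$$
where $\nabla\cdot u_\varepsilon=0$ has been used to replace $u_\varepsilon\cdot\nabla n_\varepsilon$ by $\nabla\cdot(n_\varepsilon u_\varepsilon)$. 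Multiplying by $\psi$ and integrating by parts gives
$$\int_\Omega n_{\varepsilon t}\psi=-\int_\Omega\nabla n_\varepsilon\cdot\nabla\psi+\int_\Omega n_\varepsilon S_\varepsilon(x,n_\varepsilon,c_\varepsilon)\nabla c_\varepsilon\cdot\nabla\psi+\int_\Omega n_\varepsilon u_\varepsilon\cdot\nabla\psi,$$
in which every boundary contribution vanishes because $\nabla n_\varepsilon\cdot\nu=0$, $u_\varepsilon=0$ on $\partial\Omega$, and $\rho_\varepsilon\in C_0^\infty(\Omega)$ forces $S_\varepsilon$ to vanish near $\partial\Omega$ (see \dref{3.10gghhjuuloollyuigghhhyy}).

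Next I would estimate the three terms by the Cauchy--Schwarz inequality. By \dref{x1.73142vghf48gg} one has $n_\varepsilon|S_\varepsilon|\le C_S n_\varepsilon(1+n_\varepsilon)^{-\alpha}\le C_S\|n_\varepsilon\|_{L^\infty(\Omega)}$, so invoking the $\varepsilon$-independent bounds of Proposition \ref{lemma45630hhuujjtydfrjj} on $\|n_\varepsilon\|_{L^\infty(\Omega)}$, on $\|\nabla c_\varepsilon\|_{L^\infty(\Omega)}$ (hence on $\|\nabla c_\varepsilon\|_{L^2(\Omega)}$) and on $\|u_\varepsilon\|_{L^\infty(\Omega)}$ (hence on $\|u_\varepsilon\|_{L^2(\Omega)}$), all factors multiplying $\|\nabla\psi\|_{L^2(\Omega)}$ except $\|\nabla n_\varepsilon\|_{L^2(\Omega)}$ are controlled uniformly in $t$ and $\varepsilon$. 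Taking the supremum over $\psi$ with $\|\psi\|_{W^{1,2}(\Omega)}\le1$ yields $\|n_{\varepsilon t}\|_{(W^{1,2}(\Omega))^*}\le\|\nabla n_\varepsilon\|_{L^2(\Omega)}+C$, and after squaring and integrating, the claim \dref{zjscz2.5297x9630111kkhhiioott4} follows once we know that $\int_0^t\|\nabla n_\varepsilon\|_{L^2(\Omega)}^2\,ds\le C(t)$ with $C(t)$ independent of $\varepsilon$.

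It then remains to supply that gradient bound, which is the only genuinely quantitative step. I would obtain it by testing the first equation of \dref{1.1fghyuisda} with $n_\varepsilon$ itself: the convective term drops since $\int_\Omega(u_\varepsilon\cdot\nabla n_\varepsilon)n_\varepsilon=\tfrac12\int_\Omega u_\varepsilon\cdot\nabla n_\varepsilon^2=0$, the diffusion term produces $-\int_\Omega|\nabla n_\varepsilon|^2$, and the taxis term gives $\int_\Omega n_\varepsilon S_\varepsilon\nabla c_\varepsilon\cdot\nabla n_\varepsilon$, which by Young's inequality is absorbed as $\tfrac12\int_\Omega|\nabla n_\varepsilon|^2$ plus $\tfrac12\int_\Omega n_\varepsilon^2|S_\varepsilon|^2|\nabla c_\varepsilon|^2\le C$ (once more using the $L^\infty$ bounds and \dref{x1.73142vghf48gg}). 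This leads to $\tfrac12\tfrac{d}{dt}\int_\Omega n_\varepsilon^2+\tfrac12\int_\Omega|\nabla n_\varepsilon|^2\le C$, and integrating over $(0,t)$ together with $\int_\Omega n_0^2<\infty$ gives $\int_0^t\int_\Omega|\nabla n_\varepsilon|^2\le C(t)$ uniformly in $\varepsilon$.

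Combining the last two steps completes the proof. I do not anticipate a serious obstacle: since Proposition \ref{lemma45630hhuujjtydfrjj} already furnishes all the pointwise-in-time $L^\infty$ estimates, the argument is routine, and the only point requiring care is the vanishing of the boundary integrals, guaranteed by the compact support of $\rho_\varepsilon$ and the homogeneous Neumann/no-slip conditions. Alternatively, the gradient bound could be read off from the dissipation estimate \dref{bnmbncz2.5ghhjuyuivvbnnihjj} combined with the $L^\infty$ bound on $n_\varepsilon$, but the direct testing above is cleaner.
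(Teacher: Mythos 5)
Your proof is correct and essentially identical to the paper's: both test the first equation of \dref{1.1fghyuisda} with $n_\varepsilon$ to obtain $\int_0^t\int_\Omega|\nabla n_\varepsilon|^2\le C(t)$ (absorbing the taxis term by Young's inequality together with the uniform bounds on $n_\varepsilon$, $\nabla c_\varepsilon$, $u_\varepsilon$), and then bound $\|n_{\varepsilon t}(\cdot,t)\|_{(W^{1,2}(\Omega))^*}$ by duality against $\varphi\in W^{1,2}(\Omega)$, the only cosmetic differences being the order of these two steps and that the paper estimates the convection term as $\|u_\varepsilon\|_{L^\infty(\Omega)}\|\nabla n_\varepsilon\|_{L^2(\Omega)}\|\varphi\|_{L^2(\Omega)}$ after one further integration by parts. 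One small caveat: your parenthetical alternative of extracting the gradient bound from \dref{bnmbncz2.5ghhjuyuivvbnnihjj} plus the $L^\infty$ bound on $n_\varepsilon$ works only for $\alpha\le1$, since for $\alpha>1$ the weight $n_\varepsilon^{2\alpha-2}$ degenerates where $n_\varepsilon$ is small.
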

\begin{proof}
 Firstly, with the help of  Lemma \ref{lemma45630hhuujjuuyy}, for all $\varepsilon\in(0,1),$ we can fix a positive constants $C_1$ such that
 \begin{equation}
n_\varepsilon\leq C_1,|\nabla c_\varepsilon|  \leq C_1~~\mbox{and}~~|u_\varepsilon|  \leq C_1 ~~\mbox{in}~~ \Omega\times(0,\infty).
\label{33444gbhnzjscz2.5297x9630111kkhhiioo}
\end{equation}
%whence due to the fact that $D_{\varepsilon}\leq D + 2\varepsilon$ for all $\varepsilon\in(0, 1)$, we also derive that
%\begin{equation}
%D_\varepsilon(n_\varepsilon)\leq ~~C_2 ~~\mbox{in}~~ \Omega\times(0,\infty)~~\mbox{for all}~~\varepsilon\in(0, 1),
%\label{ghjjgbhngbhnzjscz2.5297x9630111kkhhiioo}
%\end{equation}
%where $C_2:=\|D\|_{L^\infty((0,C_1))}+2$.
Recalling \dref{x1.73142vghf48gg} and $n_{\varepsilon}\geq0$ in
 $\Omega\times(0,\infty)$,  %for all $\varepsilon\in(0, 1)$,
 we also derive that
 \begin{equation}
|S_\varepsilon(x,n_\varepsilon,c_\varepsilon)|\leq \frac{C_S}{(1+n_\varepsilon)^\alpha} \leq C_S~~\mbox{in}~~ \Omega\times(0,\infty)~~\mbox{for all}~~\varepsilon\in(0, 1).
\label{4566hnjgbhngbhnzjscz2.5297x9630111kkhhiioo}
\end{equation}
Taking ${ n_{\varepsilon}}$ as the test function for the first equation of
$\dref{1.1fghyuisda}$, combining with \dref{33444gbhnzjscz2.5297x9630111kkhhiioo}--\dref{4566hnjgbhngbhnzjscz2.5297x9630111kkhhiioo} and using $\nabla\cdot u_\varepsilon=0$, we derive %from \dref{1.ffggvbbnxxccvvn1}
 that
%\begin{equation}
%\begin{array}{rl}
%&\disp{\frac{1}{{p}}\frac{d}{dt}\|n\|^{{{p}}}_{L^{{p}}(\Omega)}+
%(m-1)\int_{\Omega}{n^{p-2}}|{\nabla} {n}|^2 }
%\\
%=&\disp{(m-1)\int_\Omega  {n}^{2\alpha}{\nabla} {n}\cdot\nabla c +a\int_{\Omega}n^{p}- b\int_{\Omega}{n^{p+1}}}\\
%\leq&\disp{(m-1)C_S\int_\Omega n^{p-(m-1)-1}|{\nabla} {n}||\nabla c| +a\int_{\Omega}n^{p}- b\int_{\Omega}{n^{p+1}}}\\
%\leq&\disp{\frac{C_D(m-1)}{2}\int_{\Omega}n^{p-2} |\nabla n|^2+\frac{(m-1)\chi^2}{2C_D}\int_\Omega{n}^{p+1-m-m-1}|\nabla c|^2}\\
%&+ \disp{a\int_{\Omega}n^{p}- b\int_{\Omega}{n^{p+1}},}\\
%%\leq&\disp{\frac{(m-1)}{m+m-1}\int_\Omega(n+1)^{m+p}+
%%(m-1)(\|\nabla c\|^{2}_{L^2(\Omega)}+\|\nabla u\|^{2}_{L^2(\Omega)}+\frac{1}{4}\int_\Omega  n^{p-2}|{\nabla} {n}|^{2})}\\
%%&+ \disp{(m-1)\int_\Omega (n+1)^{3(2q-3m-m-1)}+(a+2b)\int_{\Omega}n^{p}+b\int_{\Omega}{n}^{2\alpha}- b\int_{\Omega}{n^{p+1}}}\\
%%\leq&\disp{(m-1)\int_\Omega  (n+1)^{p-q-1}|{\nabla} {n}||\nabla c|  .}\\
%%\leq&\disp{\int_\Omega (|f(x,t)|+L)|u|^{q+1}  }\\
%%\leq&\disp{\int_\Omega (k_1|u|^(m-1)+k_2)|u|^{q+1}  }\\
%%=&\disp{\int\int\triangle J(x-y)u(y)(u|u|^q(x))dy .}
%%\leq&\disp{(|g|_{L^\infty(0,\omega; L^\infty(\Omega))}+1)(|\Omega|+1)^{\frac{1}{2}}|u|^{q+1}_{q+2}.}\\
%%\leq&\disp{\frac{q+1}{q+2}((|f|_{L^\infty(0,\omega; L^\infty(\Omega))}+1)(|\Omega|+1)^{\frac{1}{2}})^{\frac{q+2}{q+1}}|u|^{q+2}_{q+2}+\frac{1}{q+2}}\\
%%\leq&\disp{\frac{q+1}{q+2}(|f|_{L^\infty(0,\omega; L^\infty(\Omega))}+1)^2(|\Omega|+1)|u|^{q+2}_{q+2}+\frac{1}{q+2}.}\\
%\end{array}
%\label{cz2.5ghju48}
%\end{equation}

\begin{equation}
\begin{array}{rl}
\disp{\frac{1}{{2}}\frac{d}{dt}\|n_\varepsilon \|^{{2}}_{L^{{2}}(\Omega)}+\int_{\Omega} |\nabla n_\varepsilon|^2}=&\disp{-\int_\Omega  n_\varepsilon \nabla\cdot(n_\varepsilon S_\varepsilon(x, n_{\varepsilon}, c_{\varepsilon})
\nabla c_\varepsilon) }\\
=&\disp{ \int_\Omega    n_\varepsilon S_\varepsilon(x, n_{\varepsilon}, c_{\varepsilon})
\nabla n_\varepsilon\cdot\nabla c_\varepsilon}\\
\leq&\disp{ C_S\int_\Omega    n_\varepsilon
|\nabla n_\varepsilon||\nabla c_\varepsilon|}\\
\leq&\disp{\frac{1}{2}\int_{\Omega} |\nabla n_\varepsilon|^2+\frac{1}{2}C_S^2C_1^4 |\Omega|~~\mbox{for all}~~ t>0.}\\
%\leq&\disp{\int_\Omega n^{p-(m-1)}|\Delta c| ,}\\
%\leq&\disp{\int_\Omega (|f(x,t)|+L)|u|^{q+1} }\\
%\leq&\disp{\int_\Omega (k_1|u|^(m-1)+k_2)|u|^{q+1} }\\
%=&\disp{\int\int\triangle J(x-y)u(y)(u|u|^q(x))dydx.}
%\leq&\disp{(|g|_{L^\infty(0,\omega; L^\infty(\Omega))}+1)(|\Omega|+1)^{\frac{1}{2}}|u|^{q+1}_{q+2}.}\\
%\leq&\disp{\frac{q+1}{q+2}((|f|_{L^\infty(0,\omega; L^\infty(\Omega))}+1)(|\Omega|+1)^{\frac{1}{2}})^{\frac{q+2}{q+1}}|u|^{q+2}_{q+2}+\frac{1}{q+2}}\\
%\leq&\disp{\frac{q+1}{q+2}(|f|_{L^\infty(0,\omega; L^\infty(\Omega))}+1)^2(|\Omega|+1)|u|^{q+2}_{q+2}+\frac{1}{q+2}.}\\
\end{array}
\label{2222ttty3333cz2.5114114}
\end{equation}
%Integrating by parts to  \dref{cz2.5} and using \dref{9162} and \dref{9162ccxxzz12}, we have
%which, together with \dref{9162} implies that
%\begin{equation*}
%\begin{array}{rl}
%&\disp{\frac{1}{{2}}\frac{d}{dt}\|u\|^{{{2}}}_{L^{{2}}(\Omega)}+{\alpha}\int_{\Omega}  u^{{{2}}-2}|\nabla u|^2}
%\\
%\leq&\disp{-\chi\int_\Omega \nabla\cdot( u\nabla c)
%  u^{{2}-1} +
%\int_\Omega   u^{{2}-1}(au-bu^2) ,}\\
%%\leq&\disp{\int_\Omega (|f(x,t)|+L)|u|^{q+1} }\\
%%\leq&\disp{\int_\Omega (k_1|u|^(m-1)+k_2)|u|^{q+1} }\\
%%=&\disp{\int\int\triangle J(x-y)u(y)(u|u|^q(x))dy.}
%%\leq&\disp{(|g|_{L^\infty(0,\omega; L^\infty(\Omega))}+1)(|\Omega|+1)^{\frac{1}{2}}|u|^{q+1}_{q+2}.}\\
%%\leq&\disp{\frac{q+1}{q+2}((|f|_{L^\infty(0,\omega; L^\infty(\Omega))}+1)(|\Omega|+1)^{\frac{1}{2}})^{\frac{q+2}{q+1}}|u|^{q+2}_{q+2}+\frac{1}{q+2}}\\
%%\leq&\disp{\frac{q+1}{q+2}(|f|_{L^\infty(0,\omega; L^\infty(\Omega))}+1)^2(|\Omega|+1)|u|^{q+2}_{q+2}+\frac{1}{q+2}.}\\
%\end{array}
%\end{equation*}
 Rearranging and integrating over $(0,t)$ imply
%which implies that
\begin{equation}
\int_{0}^t\int_{\Omega} |\nabla n_{\varepsilon}|^2\leq C_2~~\mbox{for all}~~ t>0.
 \label{fgbhvbhnjmkvgcz2.5ghhjuyhddffhjjuiihjj}
\end{equation}
Now, testing the first equation by certain   $\varphi\in W^{1,2}(\Omega)$, we have
 \begin{equation}
\begin{array}{rl}
\disp\int_{\Omega}n_{\varepsilon t}(\cdot,t)\cdot\varphi =&\disp{\int_{\Omega}\left[\Delta n_{\varepsilon}-\nabla\cdot(n_{\varepsilon}S_\varepsilon(x, n_{\varepsilon}, c_{\varepsilon})\nabla c_{\varepsilon})-u_{\varepsilon}\cdot\nabla n_{\varepsilon}\right]\cdot\varphi}
\\
=&\disp{-\int_\Omega \nabla n_\varepsilon\cdot\nabla\varphi+\int_\Omega n_{\varepsilon} S_\varepsilon(x, n_{\varepsilon}, c_{\varepsilon})\nabla c_{\varepsilon}\cdot\nabla\varphi+\int_\Omega n_{\varepsilon}u_\varepsilon\cdot\nabla\varphi}\\
\leq&\disp{\|\nabla n_\varepsilon\|_{L^2(\Omega)}\|\nabla\varphi\|_{L^2(\Omega)}+C_1C_S\|\nabla c_{\varepsilon}\|_{L^2(\Omega)}\|\nabla\varphi\|_{L^2(\Omega)}}\\
&+\disp{C_1\|\nabla n_\varepsilon\|_{L^2(\Omega)}\|\varphi\|_{L^2(\Omega)}}\\
\leq&\disp{(\|\nabla n_\varepsilon\|_{L^2(\Omega)}+C_1C_S\|\nabla c_{\varepsilon}\|_{L^2(\Omega)}+C_1\|\nabla n_\varepsilon\|_{L^2(\Omega)})\|\varphi\|_{W^{1,2}(\Omega)}.}\\
%&+\disp{C_1\|\nabla n_\varepsilon\|_{L^2(\Omega)}\|\varphi\|_{L^2(\Omega)}}\\
%\leq&\disp{\frac{(p-1)}{m+p-1}\int_\Omega(n+1)^{m+p}+
%(p-1)(\|\nabla c\|^{2}_{L^2(\Omega)}+\|\nabla u\|^{2}_{L^2(\Omega)}+\frac{1}{4}\int_\Omega  n_{\varepsilon}^{m+p-3}|{\nabla} {n}_{\varepsilon}|^{2})}\\
%&+ \disp{(p-1)\int_\Omega (n+1)^{3(2q-3m-p-1)}+(a+2b)\int_{\Omega}n^{p}_{\varepsilon}+b\int_{\Omega}{n}_{\varepsilon}^{p-1}- b\int_{\Omega}{n^{p+1}_{\varepsilon}}}\\
%\leq&\disp{(p-1)\int_\Omega  (n+1)^{p-q-1}|{\nabla} {n}_{\varepsilon}||\nabla c|  .}\\
%\leq&\disp{\int_\Omega (|f(x,t)|+L)|u|^{q+1}  }\\
%\leq&\disp{\int_\Omega (k_1|u|^\alpha+k_2)|u|^{q+1}  }\\
%=&\disp{\int\int\triangle J(x-y)u(y)(u|u|^q(x))dy .}
%\leq&\disp{(|g|_{L^\infty(0,\omega; L^\infty(\Omega))}+1)(|\Omega|+1)^{\frac{1}{2}}|u|^{q+1}_{q+2}.}\\
%\leq&\disp{\frac{q+1}{q+2}((|f|_{L^\infty(0,\omega; L^\infty(\Omega))}+1)(|\Omega|+1)^{\frac{1}{2}})^{\frac{q+2}{q+1}}|u|^{q+2}_{q+2}+\frac{1}{q+2}}\\
%\leq&\disp{\frac{q+1}{q+2}(|f|_{L^\infty(0,\omega; L^\infty(\Omega))}+1)^2(|\Omega|+1)|u|^{q+2}_{q+2}+\frac{1}{q+2}.}\\
\end{array}
\label{gbhncvbmdcfvgcz2.5ghju48}
\end{equation}
%where  $H_\varepsilon(s) :=\int_0^sD_\varepsilon(\tau)d\tau$ for $s\geq0$.
%On the other hand, by  $D_{\varepsilon}\leq D + 2\varepsilon$ and $n_{\varepsilon}\geq0$, we also get that
%\begin{equation}
%\begin{array}{rl}
%&\disp{ H(n_\varepsilon)\leq C_3:=C_1(\|D\|_{L^\infty((0,C_1))}+2)~~\mbox{in}~~ \Omega\times(0,\infty)~~\mbox{for all}~~\varepsilon\in(0, 1).}\\
%%\leq&\disp{\frac{(p-1)}{m+p-1}\int_\Omega(n+1)^{m+p}+
%%(p-1)(\|\nabla c\|^{2}_{L^2(\Omega)}+\|\nabla u\|^{2}_{L^2(\Omega)}+\frac{1}{4}\int_\Omega  n_{\varepsilon}^{m+p-3}|{\nabla} {n}_{\varepsilon}|^{2})}\\
%%&+ \disp{(p-1)\int_\Omega (n+1)^{3(2q-3m-p-1)}+(a+2b)\int_{\Omega}n^{p}_{\varepsilon}+b\int_{\Omega}{n}_{\varepsilon}^{p-1}- b\int_{\Omega}{n^{p+1}_{\varepsilon}}}\\
%%\leq&\disp{(p-1)\int_\Omega  (n+1)^{p-q-1}|{\nabla} {n}_{\varepsilon}||\nabla c|  .}\\
%%\leq&\disp{\int_\Omega (|f(x,t)|+L)|u|^{q+1}  }\\
%%\leq&\disp{\int_\Omega (k_1|u|^\alpha+k_2)|u|^{q+1}  }\\
%%=&\disp{\int\int\triangle J(x-y)u(y)(u|u|^q(x))dy .}
%%\leq&\disp{(|g|_{L^\infty(0,\omega; L^\infty(\Omega))}+1)(|\Omega|+1)^{\frac{1}{2}}|u|^{q+1}_{q+2}.}\\
%%\leq&\disp{\frac{q+1}{q+2}((|f|_{L^\infty(0,\omega; L^\infty(\Omega))}+1)(|\Omega|+1)^{\frac{1}{2}})^{\frac{q+2}{q+1}}|u|^{q+2}_{q+2}+\frac{1}{q+2}}\\
%%\leq&\disp{\frac{q+1}{q+2}(|f|_{L^\infty(0,\omega; L^\infty(\Omega))}+1)^2(|\Omega|+1)|u|^{q+2}_{q+2}+\frac{1}{q+2}.}\\
%\end{array}
%\label{fghhcvbmdcfvgcz2.5ghju48}
%\end{equation}
Therefore,  we conclude that
 \begin{equation}
\begin{array}{rl}
\disp\|n_{\varepsilon t}(\cdot,t)\|_{(W^{1,2}(\Omega))^*}^2 =&\disp{\sup_{\varphi\in W^{1,2}(\Omega),\|\varphi\|_{W^{1,2}(\Omega)}\leq1}\left|\int_{\Omega}n_{\varepsilon t}(\cdot,t)\varphi\right|^2}\\
\leq&\disp{2(1+C_1)^2\|\nabla n_\varepsilon\|_{L^2(\Omega)}^2+2C_1^2C_S^2\|\nabla c_{\varepsilon}\|_{L^2(\Omega)}^2.}\\
%\leq&\disp{\frac{(p-1)}{m+p-1}\int_\Omega(n+1)^{m+p}+
%(p-1)(\|\nabla c\|^{2}_{L^2(\Omega)}+\|\nabla u\|^{2}_{L^2(\Omega)}+\frac{1}{4}\int_\Omega  n_{\varepsilon}^{m+p-3}|{\nabla} {n}_{\varepsilon}|^{2})}\\
%&+ \disp{(p-1)\int_\Omega (n+1)^{3(2q-3m-p-1)}+(a+2b)\int_{\Omega}n^{p}_{\varepsilon}+b\int_{\Omega}{n}_{\varepsilon}^{p-1}- b\int_{\Omega}{n^{p+1}_{\varepsilon}}}\\
%\leq&\disp{(p-1)\int_\Omega  (n+1)^{p-q-1}|{\nabla} {n}_{\varepsilon}||\nabla c|  .}\\
%\leq&\disp{\int_\Omega (|f(x,t)|+L)|u|^{q+1}  }\\
%\leq&\disp{\int_\Omega (k_1|u|^\alpha+k_2)|u|^{q+1}  }\\
%=&\disp{\int\int\triangle J(x-y)u(y)(u|u|^q(x))dy .}
%\leq&\disp{(|g|_{L^\infty(0,\omega; L^\infty(\Omega))}+1)(|\Omega|+1)^{\frac{1}{2}}|u|^{q+1}_{q+2}.}\\
%\leq&\disp{\frac{q+1}{q+2}((|f|_{L^\infty(0,\omega; L^\infty(\Omega))}+1)(|\Omega|+1)^{\frac{1}{2}})^{\frac{q+2}{q+1}}|u|^{q+2}_{q+2}+\frac{1}{q+2}}\\
%\leq&\disp{\frac{q+1}{q+2}(|f|_{L^\infty(0,\omega; L^\infty(\Omega))}+1)^2(|\Omega|+1)|u|^{q+2}_{q+2}+\frac{1}{q+2}.}\\
\end{array}
\label{ghhjjgbhncvbmdcfvgcz2.5ghju48}
\end{equation}
Recalling \dref{33444gbhnzjscz2.5297x9630111kkhhiioo} and \dref{fgbhvbhnjmkvgcz2.5ghhjuyhddffhjjuiihjj}, and integrating this inequality, we can finally get \dref{zjscz2.5297x9630111kkhhiioott4}.
\end{proof}

\section{ Passing to the limit}

In this section we consider convergence of solutions of approximate problem \dref{1.1fghyuisda} and
then prove Theorem \ref{theorem3}. In order to
achieve this, we will first ensure that it is a weak solution. And then by applying the standard
parabolic regularity and the classical Schauder estimates for the Stokes evolution, we will
show that it is sufficiently regular so as to be a classical solution.
\begin{lemma}\label{lemma45630223}
Assume that   $\alpha>0$. There exist $\theta\in (0,1), (\varepsilon_j)_{j\in \mathbb{N}}\subset (0, 1)$ and functions
\begin{equation}
 \left\{\begin{array}{ll}
 n\in C^{\theta,\frac{\theta}{2}}_{loc}(\bar{\Omega}\times[0,\infty))\cap C^{2+\theta,1+\frac{\theta}{2}}_{loc}(\bar{\Omega}\times(0,\infty)),\\
  c\in  C^{\theta,\frac{\theta}{2}}_{loc}(\bar{\Omega}\times[0,\infty))\cap C^{2+\theta,1+\frac{\theta}{2}}_{loc}(\bar{\Omega}\times(0,\infty)),\\
%  w_\varepsilon\in  C^0(\bar{\Omega}\times[0,T_{max,\varepsilon}))\cap C^{2,1}(\bar{\Omega}\times(0,T_{max,\varepsilon}))\cap L^\infty([0,T_{max,\varepsilon}); W^{1,p}(\Omega))~~\mbox{with}~~p>2,\\
  u\in  C^{\theta,\frac{\theta}{2}}_{loc}(\bar{\Omega}\times[0,\infty);\mathbb{R}^2)\cap C^{2+\theta,1+\frac{\theta}{2}}_{loc}(\bar{\Omega}\times(0,\infty);\mathbb{R}^2),\\
  P\in  C^{1,0}(\bar{\Omega}\times(0,\infty))\\
   \end{array}\right.\label{1.ffhhh1hhhjjkdffggdfghyuisda}
\end{equation}
such that $n\geq 0$ and $c \geq0$ in $\Omega\times (0,\infty),$ that $\varepsilon_j\searrow 0$ as $j\rightarrow\infty$  and
\begin{equation}
 \left\{\begin{array}{ll}
 n_\varepsilon\rightarrow n~~\in C^{0}_{loc}(\bar{\Omega}\times[0,\infty)),\\
  c_\varepsilon\rightarrow c~~\in C^{0}_{loc}(\bar{\Omega}\times[0,\infty)),\\
%  w_\varepsilon\in  C^0(\bar{\Omega}\times[0,T_{max,\varepsilon}))\cap C^{2,1}(\bar{\Omega}\times(0,T_{max,\varepsilon}))\cap L^\infty([0,T_{max,\varepsilon}); W^{1,p}(\Omega))~~\mbox{with}~~p>2,\\
 u_\varepsilon\rightarrow u~~\in C^{0}_{loc}(\bar{\Omega}\times[0,\infty);\mathbb{R}^2)\\
   \end{array}\right.
   and\label{1.ffgghhhhh1dffggdfghyuisda}
\end{equation}
as $\varepsilon=\varepsilon_j\searrow 0$, and that $(n,c,u,P)$ solves \dref{1.1} in the classical sense in $\Omega\times(0,\infty).$

\end{lemma}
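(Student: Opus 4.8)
The plan is to promote the uniform (in $\varepsilon$) a~priori estimates assembled in Section~5 into compactness, extract a single subsequence $\varepsilon_j\searrow0$ along which all three components converge, and then pass to the limit term by term in the regularized system \dref{1.1fghyuisda} to recover a classical solution of \dref{1.1}. First I would fix an exhaustion of $[0,\infty)$ by compact time intervals and invoke the uniform Hölder bounds of Lemma \ref{lemma45630hhuujjuuyy}, i.e.\ the $C^{\mu,\mu/2}$ estimates for $n_\varepsilon$, $c_\varepsilon$ and $u_\varepsilon$ on sets $\Omega\times[t,t+1]$. By the Arzelà--Ascoli theorem together with a diagonal procedure over this exhaustion, I obtain a sequence $\varepsilon_j\searrow0$ along which \dref{1.ffgghhhhh1dffggdfghyuisda} holds, namely $n_{\varepsilon_j}\to n$, $c_{\varepsilon_j}\to c$ and $u_{\varepsilon_j}\to u$ in $C^0_{loc}(\bar\Omega\times[0,\infty))$, the limits inheriting the $C^{\theta,\theta/2}_{loc}$ regularity. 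Nonnegativity of $n$ and $c$ is then immediate from that of $n_\varepsilon$, $c_\varepsilon$ (Lemma \ref{lemma70}) and the pointwise convergence.

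Next I would sharpen the convergence away from the initial time. The higher-order estimates of Lemma \ref{lemma4dd5630hhuujjuuyy}, which furnish uniform $C^{2+\theta,1+\theta/2}$ bounds for $c_\varepsilon$ and $u_\varepsilon$ on $\Omega\times[t,t+1]$ with $t\ge t_0>0$, combined once more with Arzelà--Ascoli, upgrade the convergence so that $c_{\varepsilon_j}$ and $u_{\varepsilon_j}$, together with their first- and second-order spatial derivatives and their time derivatives, converge locally uniformly in $\Omega\times(0,\infty)$; this simultaneously secures $c,u\in C^{2+\theta,1+\frac{\theta}{2}}_{loc}(\bar\Omega\times(0,\infty))$. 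For the density, controlled only at the Hölder level, the $C^0_{loc}$ convergence from the first step already suffices pointwise, but to justify the limit in the weak formulation I would additionally use the time-derivative bound of Lemma \ref{lemmssddda45630hhuujjuuyytt}, namely $n_{\varepsilon t}\in L^2((0,t);(W^{1,2}(\Omega))^*)$ uniformly, so that the Aubin--Lions lemma (\cite{Simon}) yields strong $L^2_{loc}$ convergence of $n_{\varepsilon_j}$.

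With these modes of convergence in hand, I would pass to the limit in the weak form of \dref{1.1fghyuisda}, checking each term: the Yosida approximation gives $Y_\varepsilon u_\varepsilon\to u$, while $\rho_\varepsilon\nearrow1$, $\chi_\varepsilon\nearrow1$ and the continuity \dref{x1.73142vghf48rtgyhu} of $S$ ensure $n_\varepsilon S_\varepsilon(x,n_\varepsilon,c_\varepsilon)\nabla c_\varepsilon\to nS(x,n,c)\nabla c$ and $(Y_\varepsilon u_\varepsilon\cdot\nabla)u_\varepsilon\to(u\cdot\nabla)u$ in the relevant local topologies, so that $(n,c,u)$ is a weak solution of \dref{1.1}. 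To promote this to a classical solution I would then regard the first equation as a \emph{linear} scalar parabolic equation for $n$ whose drift and source now involve the already-smooth fields $c$ and $u$; the Hölder continuity of the resulting data allows classical parabolic Schauder theory to yield $n\in C^{2+\theta,1+\frac{\theta}{2}}_{loc}(\bar\Omega\times(0,\infty))$, after which the pressure $P\in C^{1,0}(\bar\Omega\times(0,\infty))$ is reconstructed from the Stokes equation by the standard de~Rham argument.

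I expect the main obstacle to be precisely the density limit. Because $n_\varepsilon$ enjoys only Hölder (not second-order) $\varepsilon$-independent bounds, the passage to the limit in the cross-diffusion flux $\nabla\cdot(n_\varepsilon S_\varepsilon\nabla c_\varepsilon)$ must couple the strong $L^2_{loc}$ compactness of $n_{\varepsilon_j}$ from Aubin--Lions with the locally uniform convergence of $\nabla c_{\varepsilon_j}$ and the uniform boundedness of $S_\varepsilon$; only once $(n,c,u)$ is identified as a weak solution can the bootstrap to classical regularity for $n$ be carried out, so the delicate point is organizing these two arguments in the correct order rather than any single hard estimate.
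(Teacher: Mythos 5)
Your proposal is correct and follows essentially the same route as the paper: uniform Hölder and higher-order estimates (Lemmas \ref{lemma45630hhuujjuuyy}, \ref{lemma4dd5630hhuujjuuyy}, \ref{lemmssddda45630hhuujjuuyytt}, Proposition \ref{lemma45630hhuujjtydfrjj}) plus Arzelà--Ascoli/Aubin--Lions compactness to extract the subsequence, passage to the limit in the weak formulation (treating the Yosida term and the cut-off sensitivity $S_\varepsilon$ exactly as the paper does), and a final Schauder bootstrap to upgrade the weak limit to a classical solution with the pressure recovered from the Stokes equation. The only differences are cosmetic ones of ordering and of which regularity is inherited directly from the uniform bounds versus re-derived by Schauder theory on the limit problem.
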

\begin{proof}

In conjunction with \ref{lemma4dd5630hhuujjuuyy} and \ref{lemmssddda45630hhuujjuuyytt}, Proposition \ref{lemma45630hhuujjtydfrjj}  and the standard compactness arguments (see \cite{Simon}), we can thus
find a sequence $(\varepsilon_j)_{j\in \mathbb{N}}\subset (0, 1)$ such that $\varepsilon_j\searrow 0$ as $j\rightarrow\infty$, and such that
\begin{equation}
 \left\{\begin{array}{ll}
 n_\varepsilon\rightarrow n ~~\mbox{ in}~~ L^2_{loc}(\bar{\Omega}\times[0,\infty))~~\mbox{a.e.}~~ \mbox{in}~~ \Omega\times (0,\infty),\\
  \nabla n_\varepsilon\rightharpoonup \nabla n ~~\mbox{weakly in}~~ L^2_{loc}(\bar{\Omega}\times[0,\infty)),\\
  c_\varepsilon\rightarrow c ~~\mbox{ in}~~ L^2_{loc}(\bar{\Omega}\times[0,\infty))~~\mbox{a.e.}~~ \mbox{in}~~ \Omega\times (0,\infty),\\
  \nabla c_\varepsilon\rightarrow \nabla c  ~~\mbox{in}~~ C^0_{loc}(\bar{\Omega}\times[0,\infty)),\\
  \nabla c_\varepsilon\rightharpoonup \nabla c ~~\mbox{in}~~ L^{2}(\Omega\times(0,\infty)),\\
  u_\varepsilon\rightarrow u ~~~~\mbox{ in}~~ L^2_{loc}(\bar{\Omega}\times[0,\infty))~~\mbox{a.e.}~~ \mbox{in}~~ \Omega\times (0,\infty),\\
  D u_\varepsilon\rightharpoonup Du ~~\mbox{weakly star in}~~L^{\infty}(\Omega\times(0,\infty))\\
%  w_\varepsilon\in  C^0(\bar{\Omega}\times[0,T_{max,\varepsilon}))\cap C^{2,1}(\bar{\Omega}\times(0,T_{max,\varepsilon}))\cap L^\infty([0,T_{max,\varepsilon}); W^{1,p}(\Omega))~~\mbox{with}~~p>2,\\
 u_\varepsilon\rightarrow u ~~\mbox{in}~~  C^{0}_{loc}(\bar{\Omega}\times[0,\infty))\\
   \end{array}\right.
   \label{1.ffgghhhhh1dffggdfghyuisddffffda}
\end{equation}
for some limit function $(n, c, u).$ Moreover, \dref{1.ffgghhhhh1dffggdfghyuisddffffda} implies that
\begin{equation}n_\varepsilon S_\varepsilon(x, n_{\varepsilon}, c_{\varepsilon})\nabla c_\varepsilon\rightarrow nS(x, n, c)\nabla c~~\mbox{a.e.}~~\mbox{in}~\Omega\times(0,\infty)
\label{1.1ddddfddfftffghhhtyygghhyujiiifgghhhgffgge6bhhjh66ccdf2345ddvbnmklllhyuisda}
\end{equation}
by using \dref{x1.73142vghf48rtgyhu}
and \dref{x1.73142vghf48gg}. Next we shall prove that $(n, c, u)$ is a weak solution of problem \dref{1.1}, in the natural sense as specified
in \cite{Zhengssdddd00} (see also \cite{Winkler31215}). To this end, we first note that clearly $n$ and $c$ inherit nonnegativity from $n_{\varepsilon}$ and $c_{\varepsilon}$, and that $\nabla\cdot u = 0$
a.e. in
 $\Omega\times (0,\infty)$ according to \dref{1.1fghyuisda} and \dref{1.ffgghhhhh1dffggdfghyuisddffffda}. Now,
testing the first equation by certain   $\varphi\in C^\infty_0(\bar{\Omega}\times[0,\infty))$, we have
\begin{equation}
\begin{array}{rl}\label{eqsssx45xx12112cddfffcgghh}
\disp{-\int_0^{\infty}\int_{\Omega}n_{\varepsilon}\varphi_t-\int_{\Omega}n_0\varphi(\cdot,0) }=&\disp{-
\int_0^{\infty}\int_{\Omega}\nabla n_{\varepsilon}\cdot\nabla\varphi+\int_0^{\infty}\int_{\Omega}n_{\varepsilon}
S_{\varepsilon}(x,n_{\varepsilon},c_{\varepsilon})\nabla c_{\varepsilon}\cdot\nabla\varphi}\\
&+\disp{\int_0^{\infty}\int_{\Omega}n_{\varepsilon}u_{\varepsilon}\cdot\nabla\varphi~~\mbox{for all}~~\varepsilon\in(0, 1).}\\
\end{array}
\end{equation}
Then \dref{1.ffgghhhhh1dffggdfghyuisddffffda}, \dref{1.1ddddfddfftffghhhtyygghhyujiiifgghhhgffgge6bhhjh66ccdf2345ddvbnmklllhyuisda}, \dref{eqsssx45xx12112cddfffcgghh} and the dominated convergence theorem enables
us to conclude
\begin{equation}
\begin{array}{rl}\label{eqx45xx12112ccgghh}
\disp{-\int_0^{{\infty}}\int_{\Omega}n\varphi_t-\int_{\Omega}n_0\varphi(\cdot,0) }=&\disp{-
\int_0^{\infty}\int_{\Omega}\nabla n\cdot\nabla\varphi+\int_0^{\infty}\int_{\Omega}n
S(x,n,c)\nabla c\cdot\nabla\varphi}\\
&+\disp{\int_0^{\infty}\int_{\Omega}nu\cdot\nabla\varphi}\\
\end{array}
\end{equation}
by a limit procedure. Along with a similar procedure applied to the second equation
in \dref{1.1fghyuisda}, we can get
\begin{equation}
\begin{array}{rl}\label{eqx45xx12112ccgffggghhjj}
\disp{-\int_0^{\infty}\int_{\Omega}c\varphi_t-\int_{\Omega}c_0\varphi(\cdot,0)  }=&\disp{-
\int_0^{\infty}\int_{\Omega}\nabla c\cdot\nabla\varphi-\int_0^{\infty}\int_{\Omega}c\varphi+\int_0^{\infty}\int_{\Omega}n\varphi+
\int_0^{\infty}\int_{\Omega}cu\cdot\nabla\varphi}\\
\end{array}
\end{equation}
Finally, given any $\bar{\varphi}\in C_0^{\infty} (\bar{\Omega}\times[0, \infty);\mathbb{R}^2)$ satisfying $\nabla\cdot\bar{\varphi}\equiv0$,
from \dref{1.1fghyuisda} we obtain
%
%
%Then testing the third equation of \dref{1.1fghyuisda} by $\bar{\varphi}\in C_0^{\infty} (\bar{\Omega}\times[0, \infty);\mathbb{R}^2)$, we obtain
\begin{equation}
\begin{array}{rl}\label{eqx45xx12112ccgghhjjgghh}
\disp{-\int_0^{\infty}\int_{\Omega}u_{\varepsilon}\bar{\varphi}_t-\int_{\Omega}u_0\bar{\varphi}(\cdot,0) -\kappa
\int_0^{\infty}\int_{\Omega} Y_{\varepsilon}u_{\varepsilon}\otimes u_{\varepsilon}\cdot\nabla\bar{\varphi} }=&\disp{-
\int_0^{\infty}\int_{\Omega}\nabla u_{\varepsilon}\cdot\nabla\bar{\varphi}-
\int_0^{\infty}\int_{\Omega}n_{\varepsilon}\nabla\phi\cdot\bar{\varphi}}\\
\end{array}
\end{equation}
for all $\varepsilon\in(0, 1).$
Now, in view of \dref{1.ffgghhhhh1dffggdfghyuisddffffda},  thanks to the dominated convergence theorem this entails that
$$
Y_\varepsilon u_\varepsilon\rightarrow u ~~\mbox{in}~~ L_{loc}^2([0,\infty); L^2(\Omega))~~\mbox{as}~~\varepsilon = \varepsilon_j\searrow 0,
$$
which together  with \dref{1.ffgghhhhh1dffggdfghyuisddffffda} yields that
$$
\begin{array}{rl}
Y_{\varepsilon}u_{\varepsilon}\otimes u_{\varepsilon}\rightarrow u \otimes u ~\mbox{in}~L^1_{loc}(\bar{\Omega}\times[0,\infty))~\mbox{as}~\varepsilon=\varepsilon_j\searrow0,
\end{array}
$$
so that, taking $\varepsilon=\varepsilon_j\searrow 0$ and using \dref{eqx45xx12112ccgghhjjgghh},  \dref{ccvvx1.731426677gg} and \dref{1.ffgghhhhh1dffggdfghyuisddffffda} implies
\begin{equation}
\begin{array}{rl}\label{eqx45xx121eerrr12ccgghhjjgghh}
\disp{-\int_0^{\infty}\int_{\Omega}u\bar{\varphi}_t-\int_{\Omega}u_0\bar{\varphi}(\cdot,0) -\kappa
\int_0^{\infty}\int_{\Omega} u\otimes u\cdot\nabla\bar{\varphi} }=&\disp{-
\int_0^{\infty}\int_{\Omega}\nabla u\cdot\nabla\bar{\varphi}-
\int_0^{\infty}\int_{\Omega}n\nabla\phi\cdot\bar{\varphi}.}\\
\end{array}
\end{equation}
%for all $\varepsilon\in(0, 1).$
Collecting \dref{eqx45xx12112ccgghh}--\dref{eqx45xx12112ccgffggghhjj} and \dref{eqx45xx121eerrr12ccgghhjjgghh} and using \dref{1.1fghyuisda}, we derive that
$(n, c, u)$ can be complemented by some pressure function $P$ in such a way that $(n, c, u,p)$ is a weak solution of \dref{1.1},  in the natural weak sense consistent with  those  in \cite{Winkler31215} and
\cite{Winkler51215}. %Now we shall show that this weak solution actually is a classical solution. In fact, in view of Lemmas \ref{lemma45630hhuujjuuyy} and \ref{lemma4dd5630hhuujjuuyy},
%we have  $n, \nabla c$ and $u$ are
%already known to be H\"{o}lder continuous in
%$\bar{\Omega}\times(0,\Omega)$.
Thereupon, in light of  the standard parabolic  Schauder theory (\cite{Ladyzenskajaggk7101}) and the classical Schauder estimates for the Stokes evolution
problem (\cite{Solonnikov556}), we derive from \dref{1.1fghyuisda} that there exist
$\theta\in (0,1)$ and  $P\in  C^{1,0}(\bar{\Omega}\times(0,\infty))$ with the property that
\begin{equation}
 \left\{\begin{array}{ll}
 n\in C^{\theta,\frac{\theta}{2}}_{loc}(\bar{\Omega}\times[0,\infty))\cap C^{2+\theta,1+\frac{\theta}{2}}_{loc}(\bar{\Omega}\times(0,\infty)),\\
  c\in  C^{\theta,\frac{\theta}{2}}_{loc}(\bar{\Omega}\times[0,\infty))\cap C^{2+\theta,1+\frac{\theta}{2}}_{loc}(\bar{\Omega}\times(0,\infty)),\\
%  w_\varepsilon\in  C^0(\bar{\Omega}\times[0,T_{max,\varepsilon}))\cap C^{2,1}(\bar{\Omega}\times(0,T_{max,\varepsilon}))\cap L^\infty([0,T_{max,\varepsilon}); W^{1,p}(\Omega))~~\mbox{with}~~p>2,\\
  u\in  C^{\theta,\frac{\theta}{2}}_{loc}(\bar{\Omega}\times[0,\infty);\mathbb{R}^2)\cap C^{2+\theta,1+\frac{\theta}{2}}_{loc}(\bar{\Omega}\times(0,\infty);\mathbb{R}^2).\\
   \end{array}\right.\label{1.ffhhhddd1hhhjjkdffggdfghyuisda}
\end{equation}
And hence $(n,c,u,P)$ solves \dref{1.1} in the classical sense in $\Omega\times(0,\infty).$ %are satisfied in the classical sense.

The proof of Lemma \ref{lemma45630223} is completed.
\end{proof}

%With the previous results in hand, now we can prove our main result.

In order to prove our main result, we now only have to collect the results prepared during this section:

{\bf Proof of Theorem  \ref{theorem3}.} The statement is evidently implied by Lemma \ref{lemma45630223}.

%{\bf Proof of Theorem \ref{theorem3}}: The conclusion in Theorem \ref{theorem3} follows from Lemma \ref{lemma45630223} and
%Proposition \ref{lemma45630hhuujjtydfrjj}.

{\bf Acknowledgement}:
% The authors are very grateful to the anonymous reviewers for their carefully reading and valuable suggestions
%which greatly improved this work.
This work is partially supported by  the National Natural
Science Foundation of China (No. 11601215) and the Shandong Provincial
Science Foundation for Outstanding Youth (No. ZR2018JL005). %and
%the
%Natural Science Foundation of Shandong Province of China (No. ZR2016AQ17).
% and the Doctor Start-up Funding of Ludong University (No. LA2016006).


\begin{thebibliography}{00}









 %\bibitem{Alikakos72}  N. D. Alikakos, \textit{$L^p$ bounds of solutions of reaction-diffusion equations},  Comm. Partial Diff. Eqns., 4(1979), 827--868.



\bibitem{AmannAmannmo1216} H. Amann,  \textit{Compact embeddings of vector-valued Sobolev and Besov spaces}, Glasnik Mat., 35(55)(2000), 161--177.


\bibitem{Bellomo1216} N. Bellomo,  A. Belloquid,   Y. Tao, M. Winkler,  \textit{Toward a mathematical theory of
Keller--Segel models of pattern formation in biological tissues}, Math. Models Methods Appl. Sci., 25(9)(2015), 1663--1763.







\bibitem{Chaex12176} M. Chae, K. Kang, J. Lee, \textit{Existence of smooth solutions to coupled chemotaxis-
fluid
equations}, Discrete Continuous Dynam. Systems, 33(2013), 2271--2297.

\bibitem{Chaexdd12176} M. Chae, K. Kang, J. Lee, \textit{Global Existence and temporal decay in Keller--Segel models
coupled to
fluid equations}, Comm. Part. Diff. Eqns., 39(2014), 1205--1235.


\bibitem{Cie794} T. Cie\'{s}lak,  P. Lauren\c{c}ot,  \textit{Finite time blow--up for a one--dimensional quasilinear parabolic--parabolic chemotaxis system,}
Ann. I. H. Poincar\'{e}-AN,  27(2010), 437--446.


 \bibitem{Cie791} T. Cie\'{s}lak,  C. Stinner, \textit{Finite-time blowup and global-in-time unbounded solutions to a parabolic--parabolic quasilinear
Keller--Segel system in higher dimensions,} J. Diff. Eqns., 252(2012), 5832--5851.

 \bibitem{Cie201712791} T. Cie\'{s}lak,  C. Stinner, \textit{New critical exponents in a fully parabolic quasilinear Keller--Segel system and applications
to volume filling models}, J. Diff. Eqns., 258(2015), 2080--2113.









\bibitem{Duan12186} R. Duan, A. Lorz, P. A. Markowich, \textit{Global solutions to the coupled chemotaxis-
fluid
equations}, Comm. Part. Diff. Eqns., 35 (2010), 1635--1673.

\bibitem{Duanx41215} R. Duan, Z. Xiang, \textit{A note on global existence for the chemotaxis--Stokes model with
nonlinear diffusion}, Int. Math. Res. Not. IMRN, (2014), 1833--1852.

\bibitem{Francesco12186} M. Di Francesco, A. Lorz, P. Markowich, \textit{Chemotaxis--fluid coupled model for swimming
bacteria with nonlinear diffusion: global existence and asymptotic behavior}, Discrete Contin.
Dyn. Syst., 28(2010), 1437--1453.


\bibitem{Fujiwara66612186}  D. Fujiwara, H. Morimoto,  \textit{An $L^r$-theorem of the Helmholtz decomposition of vector fields},
J. Fac. Sci. Univ. Tokyo, 24(1977), 685-700.






\bibitem{Gilbarg4441215} D. Gilbarg, N.S.  Trudinger, \textit{Elliptic Partial Differential Equations of Second Order},
Springer, New York, 1983.

\bibitem{Guggg1215}	J. Gu, F. Meng,  \textit{Some new nonlinear Volterra-Fredholm type dynamic integralinequalities on time scales}, Applied Mathematics and Computation,  245(2014),  235--242.


\bibitem{GHenryHenry4441215} D. Henry,  \textit{Geometric Theory of Semilinear Parabolic Equations}, Springer, Berlin/Heidelberg,
1981.


\bibitem{Hillen} T. Hillen,   K. Painter, \textit{A user's guide to PDE models for chemotaxis,} J. Math. Biol., 58(2009), 183--217.




\bibitem{Horstmann791} D. Horstmann, M. Winkler, \textit{Boundedness vs. blow-up in a chemotaxis system}, J. Diff. Eqns, 215(2005), 52--107.



 \bibitem{Ishida1215}  S. Ishida, \textit{Global existence and boundedness for chemotaxis--Navier--Stokes system with position-dependent sensitivity in $2d$ bounded domains}, Discrete Contin. Dyn. Syst. Ser. A, 32(2015), 3463-3482.


 \bibitem{Zhengssdddd00}Y. Ke,  J. Zheng, \textit{An optimal result for global existence and boundedness in a three-dimensional Keller-Segel(-Navier)-Stokes system (involving a tensor-valued sensitivity with saturation)}, arXiv:1806.07067.




\bibitem{Keller2710}E. Keller, L. Segel,  \textit{Model for chemotaxis}, J. Theor. Biol., 30(1970),  225--234.

 \bibitem{Keller79} E. Keller, L. Segel, \textit{Initiation of slime mold aggregation viewed as an instability, }  J. Theor. Biol., 26(1970), 399--415.







\bibitem{Kowalczyk7101}R. Kowalczyk, \textit{Preventing blow-up in a chemotaxis model}, J. Math. Anal. Appl., 305(2005),
   566--585.



\bibitem{Ladyzenskajaggk7101} O. A. Ladyzenskaja, V. A. Solonnikov and N. N. Ura\'{l}ceva, \textit{Linear and quasi-Linear Equations
of Parabolic Type}, Amer. Math. Soc. Transl., Providence, RI, 1968.


\bibitem{Liggh00} F. Li, Q. Gao,  \textit{Blow-up of solution for a nonlinear Petrovsky type equation with memory}, Applied  Mathematic  and  Computation,
 274(2016), 383--392.

 \bibitem{Liggghh793}  X. Li, Y. Wang,  Z. Xiang,  \textit{Global existence and boundedness in a 2D Keller--Segel--Stokes system with nonlinear
diffusion and rotational flux}, Commun. Math. Sci., 14(2016), 1889--1910.

\bibitem{LiuZhLiuLiuandddgddff4556} J. Liu, Y. Wang, \textit{Global weak solutions in a three-dimensional Keller-Segel-Navier-Stokes system involving a
tensor-valued sensitivity with saturation}, J. Diff. Eqns., 262(10)(2017), 5271--5305.





   \bibitem{Lorz1215}  A. Lorz, \textit{Coupled chemotaxis fluid equations}, Math. Models Methods Appl. Sci., 20(2010), 987--1004.


\bibitem{Painter55677}  K. Painter, T. Hillen, \textit{Volume-filling and quorum-sensing in models for chemosensitive movement}, Can. Appl. Math.
Q. 10(2002), 501--543.



   \bibitem{Paintsser55677}  K. Painter, P.K. Maini,  H.G. Othmer, \textit{Complex spatial patterns in a hybrid chemotaxis
reaction-diffusion model},  J. Math. Biol. 41 (4)(2000), 285--314.


\bibitem{Peng55667}Y. Peng, Z. Xiang, \textit{Global existence and boundedness in a 3D Keller--Segel--Stokes system with nonlinear
diffusion and rotational flux}, Z. Angew. Math. Phys., (2017), 68:68.


\bibitem{Porzio555drr}  M. M. Porzio, V. Vespri,  \textit{H\"{o}lder estimate for local solutions of some doubly nonlinear
degenerate parabolic equations}, J. Diff. Eqns., 103(1993), 146--178.





\bibitem{Simon} J. Simon, \textit{Compact sets in the space $L^{p}(O, T;B)$}, Annali di Matematica Pura ed Applicata, 146(1)(1986), 65--96.



\bibitem{Sohr} H. Sohr, \textit{The Navier--Stokes equations,
 An elementary functional analytic approach},
 Birkh\"{a}user Verlag, Basel (2001).


\bibitem{Solonnikov556}  V.A. Solonnikov, \textit{Schauder estimates for the evolutionary generalized Stokes problem. In:
Nonlinear Equations and Spectral Theory}, Amer. Math. Soc. Transl., Series 2, 220, pp. 165-200.
Amer. Math. Soc., Providence, RI, 2007.




%\bibitem{Tao794} Y. Tao, M. Winkler,  \textit{Boundedness in a quasilinear parabolic--parabolic Keller--Segel system with subcritical sensitivity}, J.
%Diff. Eqns., 252(2012), 692--715.

\bibitem{Tao794} Y. Tao, M. Winkler,  \textit{Boundedness in a quasilinear parabolic--parabolic Keller--Segel system with subcritical sensitivity}, J.
Diff. Eqns., 252(2012), 692--715.




 \bibitem{Tao61215}  Y. Tao, M. Winkler, \textit{Global existence and boundedness in a Keller--Segel--Stokes model with arbitrary
porous medium diffusion}, Discrete Contin. Dyn. Syst. Ser. A, 32(2012), 1901--1914.


 \bibitem{Tao71215}  Y. Tao, M. Winkler, \textit{Locally bounded global solutions in a three-dimensional chemotaxis--Stokes system
with nonlinear diffusion}, Ann. Inst. H. Poincar\'{e} Anal. Non Lin\'{e}aire, 30(2013), 157--178.

  \bibitem{Tao41215}  Y. Tao, M. Winkler, \textit{Boundedness and decay enforced by quadratic degradation in a three-dimensional
chemotaxis--fluid system}, Z. Angew. Math. Phys., 66(2015), 2555--2573.

\bibitem{Tuval1215}  I. Tuval, L. Cisneros, C. Dombrowski, et al., \textit{Bacterial swimming and oxygen transport near contact
lines}, Proc. Natl. Acad. Sci. USA, 102(2005), 2277--2282.


 \bibitem{Wangssddss21215}  Y. Wang,  \textit{Global weak solutions in a three-dimensional
Keller-Segel-Navier-Stokes system
with subcritical sensitivity},  Math. Models Methods Appl. Sci., (27)(14)(2017), 2745--2780.





\bibitem{Wang11215} Y. Wang, X. Cao,  \textit{Global classical solutions of a $3d$ chemotaxis--Stokes system with rotation}, Discrete
Contin. Dyn. Syst. Ser. B, 20(2015), 3235--3254.



\bibitem{Wangsseeess21215} 	Y. Wang, L. Liu, X. Zhang, Y. Wu, \textit{Positive solutions of  a  fractional semipositone differential system arising from the study of  HIV infection models}, Aplied Math. Comput., 258(2015), 312-324.







  \bibitem{Wang23421215}  Y. Wang, M. Winkler, Z. Xiang, \textit{Global classical solutions in a two-dimensional chemotaxis-Navier-Stokes system with subcritical sensitivity},   Annali della Scuola Normale Superiore di Pisa-Classe di Scienze. XVIII, (2018), 2036--2145.







\bibitem{Wang21215}  Y. Wang, Z. Xiang, \textit{Global existence and boundedness in a Keller--Segel--Stokes system involving a
tensor-valued sensitivity with saturation}, J. Diff. Eqns., 259(2015), 7578--7609.



\bibitem{Wangss21215}  Y. Wang, Z. Xiang, \textit{Global existence and boundedness in a Keller-Segel-Stokes system involving a tensor-valued
sensitivity with saturation: the 3D case}, J. Diff. Eqns., 261(2016), 4944--4973.



 \bibitem{Winkler79} M. Winkler, \textit{Does a volume-filling effect always prevent chemotactic collapse}, Math. Methods Appl. Sci., 33(2010), 12--24.

 \bibitem{Winkler792} M. Winkler, \textit{Aggregation vs. global diffusive behavior in the higher-dimensional Keller--Segel model}, J. Diff.
Eqns., 248(2010), 2889--2905.

% \bibitem{Winkler21215} M. Winkler, \textit{Boundedness in the higher-dimensional parabolic--parabolic chemotaxis system with
%logistic source}, Comm.  Partial Diff. Eqns., 35(2010), 1516--1537.
%
%
%\bibitem{Winkler31215}  M. Winkler, \textit{Global large-data solutions in a chemotaxis--(Navier--)Stokes system modeling cellular swimming in
%fluid drops}, Comm. Partial Diff. Eqns., 37(2012), 319--351.

    \bibitem{Winkler31215}  M. Winkler, \textit{Global large-data solutions in a chemotaxis--(Navier--)Stokes system modeling cellular swimming in
fluid drops}, Comm. Partial Diff. Eqns., 37(2012), 319--351.


\bibitem{Winkler793} M. Winkler, \textit{Finite-time blow-up in the higher-dimensional parabolic--parabolic Keller--Segel system}, J. Math. Pures
Appl., 100(2013),  748--767.


%  \bibitem{Winkler61215}  M. Winkler, \textit{Stabilization in a two-dimensional chemotaxis--Navier--Stokes system}, Arch. Ration. Mech.
%Anal., 211(2014), 455--487.
%
%
%
% \bibitem{Winkler11215}  M. Winkler, \textit{Boundedness and large time behavior in a three-dimensional chemotaxis--Stokes system
%with nonlinear diffusion and general sensitivity},
%Calculus of Variations and Partial Diff. Eqns., (54)(2015),   3789--3828.
%
%\bibitem{Winkler51215}  M. Winkler, \textit{Global weak solutions in a three-dimensional chemotaxis--Navier--Stokes system}, Ann. Inst.
%H. Poincar\'{e} Anal. Non Lin\'{e}aire, 33(5)(2016), 1329--1352.

%







  \bibitem{Winkler61215}  M. Winkler, \textit{Stabilization in a two-dimensional chemotaxis--Navier--Stokes system}, Arch. Ration. Mech.
Anal., 211(2014), 455--487.









 \bibitem{Winkler11215}  M. Winkler, \textit{Boundedness and large time behavior in a three-dimensional chemotaxis--Stokes system
with nonlinear diffusion and general sensitivity},
Calculus of Variations and Partial Diff. Eqns., 54(2015),   3789--3828.




\bibitem{Winklesssssrdddsss51215}   M. Winkler,  \textit{Large-data global generalized solutions in a chemotaxis system with tensor-valued sensitivities},
 SIAM J.
Math. Anal., 47(2015), 3092--3115.


\bibitem{Winkler51215}  M. Winkler, \textit{Global weak solutions in a three-dimensional chemotaxis--Navier--Stokes system}, Ann. Inst.
H. Poincar\'{e} Anal. Non Lin\'{e}aire, 33(5)(2016),  1329---1352.


\bibitem{Winklerdddsss51215}  M. Winkler, \textit{Global existence and stabilization in a degenerate chemotaxis-Stokes system with mildly strong diffusion enhancement},  J. Diff. Eqns., 264(10)(2018),  6109--6151.



\bibitem{Winkler72} M. Winkler, K. C. Djie, \textit{Boundedness and finite-time
collapse in a  chemotaxis system with volume-filling effect}, Nonlinear Anal. TMA.,  72(2010),  1044--1064.






\bibitem{Xu5566r793}	F. Xu, L. Liu,  \textit{On the well-posedness of the incompressible flow in porous media}, J. Nonlinear Sci. Appl., 9(12)(2016), 6371--6381.














\bibitem{Xuess1215} C. Xue, \textit{Macroscopic equations for bacterial chemotaxis: integration of detailed biochemistry of cell signaling},
 J. Math.
Biol. 70(2015), 1-44.

 \bibitem{Xue1215}  C. Xue, H. G. Othmer, \textit{Multiscale models of taxis-driven patterning in bacterial population}, SIAM J.
Appl. Math., 70(2009), 133--167.




\bibitem{Zhang12176}  Q. Zhang, X. Zheng, \textit{Global well-posedness for the two-dimensional incompressible
chemotaxis--Navier--Stokes equations}, SIAM J. Math. Anal., 46(2014), 3078--3105.


\bibitem{Zheng00} J. Zheng, \textit{Boundedness of solutions to a quasilinear parabolic--elliptic Keller--Segel system with logistic source}, J. Diff.
Eqns., 259(1)(2015), 120--140.



\bibitem{Zheng33312186} J. Zheng, \textit{Boundedness of solutions to a quasilinear parabolic--parabolic Keller--Segel system with logistic source},
J. Math. Anal. Appl.,  431(2)(2015),  867--888.





%\bibitem{Zheng23} J. Zheng, \textit{Boundedness of solutions to a quasilinear parabolic--parabolic Keller--Segel system with logistic source}, J. Math. Anal. Appl., 431(2)(2015), 867--888.

%\bibitem{Zhengsdsd6} J. Zheng,   \textit{Boundedness in a three-dimensional chemotaxis--fluid system involving tensor-valued sensitivity with saturation},
%J. Math. Anal. Appl., 442(1)(2016), 353--375.
%
%


% \bibitem{Zhengddkkllssssssssdefr23} J. Zheng, \textit{Boundedness and global asymptotic stability of constant equilibria in a fully parabolic chemotaxis system with nonlinear logistic source},  J. Math. Anal. Appl., 450(2)(2017), 1047--1061.
%


%  \bibitem{Zhengssdddssddddkkllssssssssdefr23} J. Zheng et. al., \textit{A new result for global existence and boundedness of solutions to a parabolic--parabolic Keller--Segel system with logistic source}, J. Math. Anal. Appl., 462(1)(2018), 1--25.
%%
% \bibitem{Mizoguchissdddssddddkkllssssssssdefr23} N. Mizoguchi, M. Winkler, Blow-up in the two-dimensional parabolic Keller-Segel system, preprint.


  %   \bibitem{Zhengssssssssdefr23} J. Zheng, \textit{A new approach toward locally bounded global solutions to a  $3D$ chemotaxis-Stokes system with  nonlinear diffusion and rotation},  arXiv:1701.01334.






%\bibitem{Liuddfffff} J. Liu, Y. Wang, \textit{Boundedness and decay property in a three-dimensional Keller-Segel-Stokes system involving
%tensor-valued sensitivity with saturation}, J. Diff. Eqns., 261(2)(2016), 967--999.
%
%
%




%\bibitem{Miller7gg6} R. L. Miller, \textit{Demonstration of sperm chemotaxis in echinodermata: Asteroidea, Holothuroidea, Ophiuroidea},  J.
%Exp. Zool., 234(3):383--414, 1985.












%










%
% \bibitem{Winkler21215} M. Winkler, \textit{Boundedness in the higher-dimensional parabolic--parabolic chemotaxis system with
%logistic source}, Comm.  Partial Diff. Eqns., 35(2010), 1516--1537.










%
%
%\bibitem{Xusddeddff345511215} C. Xue, H. G. Othmer, \textit{Multiscale models of taxis-driven patterning in bacterial populations}, SIAM
%J. Appl. Math., 70(1)(2009), 133--167.


% \bibitem{Zhangdddddff4556} Q. Zhang, Y. Li, \textit{Global weak solutions for the three-dimensional chemotaxis-Navier-Stokes system with nonlinear
%diffusion}, J. Diff. Eqns., 259(8)(2015), 3730--3754.


%






\bibitem{Zhengsdsd6} J. Zheng,   \textit{Boundedness in a three-dimensional chemotaxis--fluid system involving tensor-valued sensitivity with saturation},
J. Math. Anal. Appl., 442(1)(2016), 353--375.







\bibitem{Zhengssdefr23} J. Zheng, \textit{A note on boundedness of solutions to a higher-dimensional quasi-linear chemotaxis system with logistic source},
Zeitsc.  Angew. Mathe. Mech., 97(4)(2017),  414--421.


 %\bibitem{Zhengddkkllssssssssdefr23} J. Zheng, \textit{Boundedness and global asymptotic stability of constant equilibria in a fully parabolic chemotaxis system with nonlinear logistic source},  J. Math. Anal. Appl., 450(2)(2017), 1047--1061.
%
%
%

 \bibitem{Zhengsddfffsdddssddddkkllssssssssdefr23} J. Zheng, \textit{Global weak solutions in a three-dimensional Keller-Segel-Navier-Stokes system with nonlinear diffusion},
J. Diff. Eqns., 263(5)(2017), 2606--2629.






  \bibitem{Zhenddddgssddsddfff00} J. Zheng, \textit{An optimal result for global existence and boundedness in a three-dimensional
 Keller-Segel-Stokes system   with  nonlinear diffusion},  arXiv:1807.01156, 2018.


\bibitem{Zhenssddddgssddsddfff00} J. Zheng, \textit{Blow-up prevention by nonlinear diffusion  in a 2D Keller-Segel-Navier-Stokes system with  rotational flux}, Preprint.


 \bibitem{Zhengssssdefr23} J. Zheng, Y. Wang, \textit{Boundedness and decay behavior in a higher-dimensional quasilinear chemotaxis system with nonlinear logistic source}, Computers and Mathematics with Applications, 72(10)(2016), 2604--2619.



     \bibitem{Zhengssssssdefr23} J. Zheng, Y. Wang, \textit{A note on global existence to a higher-dimensional quasilinear chemotaxis system with consumption of chemoattractant},
Discrete Contin. Dyn. Syst. Ser. B,  22(2)(2017),
669--686.







\end{thebibliography}
\end{document}